\documentclass[a4paper,11pt]{amsart}

\usepackage[colorlinks,linkcolor=blue,citecolor=blue,bookmarksnumbered,plainpages,driverfallback=dvipdfm,backref=page]{hyperref}
\usepackage{bookmark}
\usepackage{latexsym, amssymb, amsmath, amsthm, mathrsfs, bbm, pstricks, ifthen, amsfonts}
\usepackage{orcidlink}
\usepackage{tikz}
\usepackage{tikz-cd}
\usepackage{hyperref}
\usepackage{enumerate}
\usepackage{version} 

\usepackage[all, knot]{xy}
\xyoption{arc}
\allowdisplaybreaks[4]

\def \Hom{\mathrm{Hom}}

\def \cc{\mathcal{C}}

\def \dd{\mathcal{D}}
\def \ev{\mathrm{ev}}
\def \coev{\mathrm{coev}}

\def \1{\mathbf{1}}
\def \id{\operatorname{id}}

\def \Set{\sf Set}
\def \id{\mathrm{Id}}

\def \im{\mathrm{Im}}
\def \ker{\mathrm{Ker}}
\def \coker{\mathrm{Coker}}

\newcommand{\f}{\mathcal{F}}
\newcommand{\p}{\mathcal{P}}
\newcommand{\m}{\mathcal{M}}

\numberwithin{equation}{section}

\newtheorem{theorem}{Theorem}[section]
\newtheorem{lemma}[theorem]{Lemma}
\newtheorem{proposition}[theorem]{Proposition}
\newtheorem{corollary}[theorem]{Corollary}

\newtheorem{definition}[theorem]{Definition}
\newtheorem{example}[theorem]{Example}
\newtheorem{remark}[theorem]{Remark}

\newenvironment{invisible}{{\noindent\sc \colorbox{yellow}{Invisible:}\;}\color{gray}}{\medskip}

\excludeversion{invisible}

\begin{document}

\title{Equivalent definitions of fusion category arising from separability}
\thanks{}

\author{Zhenbang Zuo~\orcidlink{0009-0008-9013-0365}}

\address{%
\parbox[b]{\linewidth}{University of Turin, Department of Mathematics ``G. Peano'', via
Carlo Alberto 10, I-10123 Torino, Italy}}
\email{zhenbang.zuo@edu.unito.it }

\subjclass[2020]{Primary 18M20; Secondary 18A22}

\begin{abstract}
For a semisimple multiring category with left duals, we prove that the unit object is simple if and only if the tensor functors by any non-zero algebra are separable (resp. faithful, resp. Maschke, resp. dual Maschke, resp. conservative). This induces a list of equivalent definitions of fusion category. As an application, we describe the connectness of a class of weak Hopf algebras by the separability of tensor functors. We also consider applications to transfer of simplicity between the unit objects, semisimple indecomposable module category and Grothendieck ring.
\end{abstract}

\keywords{Fusion category, Separable functor}
\date{}
\maketitle

\section{Introduction}

The notion of separable functor was introduced in \cite{NVV89} as a generalization of separability of rings. Separability of some particular functor often provides information of objects or morphisms in the category. For instance, consider a monoidal category $\cc$ such that $1$ is a left $\otimes$-generator. An algebra $A$ in $\cc$ is separable if and only if the forgetful functor $F:\cc_A \to \cc$ is a separable functor, see \cite[Proposition 4.3]{BT15}. 

The dual notion of separable functor, which is called naturally full functor, was introduced in \cite{ACMM06}. Semiseparable functors, introduced in \cite{AB22}, play the role of bridge between separable functors and naturally full functors. Both separable functors and naturally full functors are semiseparable. More precisely, a functor is separable (resp., naturally full) if and only if it is faithful (resp., full) and semiseparable, see \cite[Proposition 1.3]{AB22}. Besides, a functor is separable if and only if it is semiseparable and Maschke, if and only if it is semiseparable and dual Maschke, if and only if it is semiseparable and conservative, see \cite[Corollary 1.9]{AB22}.

Multifusion categories are a class of significant abelian monoidal categories. A fusion category is a multifusion category such that the unit object is simple, see \cite{Eti15}, \cite{ENO05}. Roughly speaking, from the definition, we know that the simplicity of the unit object controls some information of the category from an overall perspective. For example, the component subcategory decomposition $\cc = \oplus_{i,j \in I} \cc_{ij}$ is trivial i.e. the index set $I$ contains only one element if and only if the unit object is simple, see \cite[Remark 4.3.4, Theorem 4.3.8]{Eti15}.

In this paper, the main result shows that separability can provide information directly at the level of the category, by means of the unit object. More specifically, for a semisimple multiring category $\cc$ with left duals, $\cc$ is a ring category if and only if for any non-zero algebra $A$ in $\cc$ the functor $-\otimes A:\cc \to \cc_A$ is separable. We also show that, in a semisimple abelian monoidal category $\cc$ with biexact tensor product, the functor $-\otimes A:\cc \to \cc_A$ is always semiseparable, see Proposition \ref{semisimple semiseparable}. Therefore, by using semiseparability, our result can be further extended to the faithful, Maschke, dual Maschke and conservative cases, see Proposition \ref{semisimple multiring main result}. This leads to a list of equivalent definitions of fusion category, see Proposition \ref{main result}. 

From another point of view, for a multiring category with left duals, if it is not a ring category, then the separability of the functor $-\otimes A:\cc \to \cc_A$ depends on the algebra $A$. This motivates us to study the structure of algebras. By the component subcategory decomposition of a multiring category, see \cite[Remark 4.3.4]{Eti15}, we know that $\cc = \oplus_{i,j \in I} \cc_{ij}$. For any non-empty subset $J$ of $I$, we denote by $\cc_J :=\oplus_{i,j \in J} \cc_{ij}$ the full subcategory of $\cc$ and $1_J := \oplus_{i\in J} 1_i$. We show that if $A$ is an algebra in $\cc$, then $A_J:=1_J \otimes A \otimes 1_J$ is an algebra in $\cc_J$, see Proposition \ref{component algebra}. Thus, $A_J$ is an algebra in $\cc$, see Corollary \ref{AJ is algebra in C}. Furthermore, any non-zero algebra $A$ in $\cc$ must be of the form $A_J$ where $J=\{ i\in I| A_{ii} \not= 0 \}$, see Proposition \ref{equal zero}. Although $-\otimes A: \cc \to \cc_A$ may be not separable, we show that $-\otimes A_J: \cc_J \to (\cc_J)_{A_J}$ is separable, where $J=\{ i\in I| A_{ii} \not= 0 \}$, see Proposition \ref{restriction}. 

Since the main result provides equivalent characterizations of the fundamental definition, one can obtain numerous applications from it. In this paper, we focus on the following representative applications: to weak Hopf algebras (Subsection \ref{Application to weak Hopf algebras}), transfer of simplicity between the unit objects (Subsection \ref{Application to transfer of simplicity}), semisimple indecomposable module category (Subsection \ref{Application to semisimple indecomposable module category}) and Grothendieck ring (Subsection \ref{Application to Grothendieck ring}). 

The paper is organized as follows. In Section \ref{Preliminaries}, we recall some basic definitions and useful results. In Section \ref{Semiseparability in a semisimple abelian monoidal category}, we study the separability (Proposition \ref{semisimple induction separable}) and semiseparability (Proposition \ref{semisimple semiseparable}) of the functor $-\otimes A:\cc \to \cc_A$ in a semisimple abelian monoidal category $\cc$. In Section \ref{Multifusion category}, we show that in a semisimple multiring category $\cc$ with left duals, the unit object is simple if and only if for any non-zero algebra $A$ in $\cc$ the functor $-\otimes A:\cc \to \cc_A$ is separable, which is also equivalent to several other characterizations, see Proposition \ref{semisimple multiring main result}. Furthermore, the list of equivalent definitions is given by Proposition \ref{main result}. In Section \ref{Algebras in multiring category}, we study the structure of algebras in a multiring category with left duals. In Section \ref{Applications and examples}, we provide some applications and examples.

\emph{Notations.} In this paper, we always assume the monoidal categories are strict. A functor $F:\cc\to \dd$ means a covariant functor. For a monoidal category $\cc$, we write $\otimes$ as the tensor product and $1$ as the unit object. The subcategories of algebras and coalgebras in $\cc$ are denoted by $\mathrm{Alg}(\cc)$ and $\mathrm{Coalg}(\cc)$, respectively. For an algebra $A$ in a monoidal category $\cc$, we denote its multiplication and unit by $m_A$ and $u_A$, and by ${}_A\cc$ (resp., $\cc_A$) the category of left (resp., right) modules over $A$. For a coalgebra $C$ in a monoidal category $\cc$, we denote by $\Delta_C$ and $\varepsilon_C$ its comultiplication and counit, and by ${}^C\cc$ (resp., $\cc^C$) the category of left (resp., right) comodules over $C$. Furthermore, we let $\Bbbk$ be an algebraically closed field.

\section{Preliminaries}\label{Preliminaries}

Let $F: \cc \rightarrow \dd$ be a functor and consider the natural transformation
\begin{equation*}\label{nat_transf}
	\f : \Hom_{\cc}(-,-)\rightarrow \Hom_{\dd}(F-, F-),
\end{equation*}
defined by setting $\f_{X,X'}(f)= F(f)$, for any $f:X\rightarrow X'$ in $\cc$. If there is a natural transformation $\p : \Hom_{\dd}(F-, F-)\rightarrow \Hom_{\cc}(-,-)$ such that
\begin{itemize}
	\item $\p\circ\f = \id $, then $F$ is called \emph{separable} \cite{NVV89};
	\item $\f\circ\p = \id $, then $F$ is called \emph{naturally full} \cite{ACMM06};
	\item $\f\circ\p\circ\f = \f $, then $F$ is called \emph{semiseparable} \cite{AB22}.
\end{itemize}

In this paper, we are interested in the tensor functor by an algebra or a coalgebra. More precisely, let $\cc$ be a monoidal category. For an algebra $(A, m_A, u_A)$ in $\cc$, the tensor functor by $A$ is defined by $-\otimes A:\cc\to \cc_A$ by $M\mapsto (M\otimes A,M\otimes m_A)$, $f \mapsto f \otimes A$. Note that there is an adjoint pair $(-\otimes A,F)$ where $F:\cc_A\to\cc$ is the forgetful functor. The unit $\eta:\id\to F(-\otimes A)$ of the adjunction is given by $\eta_{M}= M\otimes u_A:M\to M\otimes A$, for every object $M$ in $\cc$, while the counit $\epsilon : F(-)\otimes A \to \id$ is given by $\epsilon_{N}= \mu^A_N: F(N)\otimes A= N\otimes A\to N$, for every object $(N, \mu^A_N)$ in $\cc_A$. Similarly, for a coalgebra $C$ in $\cc$, the tensor functor $-\otimes C:\cc\to\cc^C$, which is defined by $ M\mapsto (M\otimes C, M\otimes \Delta)$ and $f\mapsto f\otimes C$, is a right adjoint of the forgetful functor $F:\cc^C\to\cc$. The unit $\eta$ is defined for every $(N,\rho_N)\in\cc^C$ by $\eta_{(N,\rho_N)}=\rho_N:N\to N\otimes C=F(N)\otimes C$, while the counit $\epsilon$ is given for any $M\in\cc$ by $\epsilon_M= M\otimes\varepsilon :F(M\otimes C)=M\otimes C\to M$, see e.g. \cite[Section 2]{MT21}. Recall that the semiseparability of $-\otimes A:\cc\to \cc_A$ and $-\otimes C:\cc\to\cc^C$ are characterized in the following way.

\begin{proposition}\cite[Proposition 3.20]{BZ26}\label{induction functor} 
Let $\cc$ be a monoidal category, and $A$ be an algebra in $\cc$. Then, the following assertions are equivalent.
\begin{itemize}
\item[$(i)$] The unit $u_A:1\to A$ is regular (resp., split-mono, split-epi) as a morphism in $\cc$.

\item[$(ii)$] the tensor functor $-\otimes A: \cc \to \cc_A$ is semiseparable (resp., separable, naturally full).

\item[$(iii)$] the tensor functor $A\otimes -: \cc \to _A\!\cc$ is semiseparable (resp., separable, naturally full).
\end{itemize} 
\end{proposition}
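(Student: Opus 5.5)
The plan is to use the standard criteria for (semi)separability of a left adjoint in terms of the unit of the adjunction. Recall that for an adjunction $(L,R)$ with unit $\eta$, the left adjoint $L$ is separable if and only if $\eta$ is split-mono, that is, there is a natural $\nu:RL\to\id$ with $\nu\circ\eta=\id$ (Rafael's theorem). It is naturally full if and only if $\eta$ is split-epi, i.e.\ there is a natural $\nu$ with $\eta\circ\nu=\id$. It is semiseparable if and only if $\eta$ is regular, i.e.\ $\eta\circ\nu\circ\eta=\eta$ for some natural $\nu$ \cite{AB22}. We apply this to $(-\otimes A,F)$, whose unit is $\eta_M=M\otimes u_A$. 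The equivalence (i)$\Leftrightarrow$(ii) then reduces to the following claim: $u_A$ is regular (resp.\ split-mono, split-epi) in $\cc$ if and only if the natural transformation $-\otimes u_A:\id\to -\otimes A$ is regular (resp.\ split-mono, split-epi) as a natural transformation of endofunctors of $\cc$.

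For (i)$\Rightarrow$(ii), suppose $e:A\to 1$ satisfies $u_A e u_A=u_A$ (resp.\ $eu_A=\id_1$, resp.\ $u_Ae=\id_A$). Define $\nu_M:=M\otimes e:M\otimes A\to M$. This is clearly natural in $M$, and the required identity holds after tensoring with $M$ on the left.

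For (ii)$\Rightarrow$(i), take a natural $\nu:-\otimes A\to\id$ satisfying the relevant identity with $\eta$ and evaluate it at $M=1$. Setting $e:=\nu_1:A\to1$, the component at $1$ of the identity gives $u_Ae u_A=u_A$ (resp.\ $eu_A=\id_1$, resp.\ $u_Ae=\id_A$), because $\eta_1=u_A$ by strictness. Naturality of $\nu$ is not needed in this direction, since only one component is used.

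The equivalence (i)$\Leftrightarrow$(iii) is the mirror argument. Here the adjunction is $(A\otimes-,F)$ with unit $\eta_M=u_A\otimes M$, and in the construction we use $\nu_M=e\otimes M$ instead of $M\otimes e$.

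The main point requiring care is the first reduction: the adjunction-theoretic characterization of semiseparability via regularity of the unit. I would cite it from \cite{AB22} rather than reprove it. If one wanted a direct argument instead, given a regular unit one would define $\p_{M,N}(g)=\nu_N\circ g\circ\eta_M$ for $g:M\otimes A\to N\otimes A$ in $\cc_A$ (viewing $g$ through $F$ and using the adjunction to identify the relevant hom-sets). One then checks $\f\p\f=\f$ using $\eta\nu\eta=\eta$ together with the naturality of $\eta$. The split-mono and split-epi cases are the classical Rafael-type statements.
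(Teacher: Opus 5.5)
Your proof is correct and is essentially the argument behind the cited result. The Rafael-type theorems of \cite{AB22} and \cite{ACMM06} reduce everything to regularity (resp.\ splitting) of $\eta=-\otimes u_A$, which you obtain from $u_A$ via $\nu_M=M\otimes e$ and recover by evaluating at $M=1$. The paper itself gives no proof, but its later remark writing $e_M=\nu^1_M(\id_M\otimes\psi'\varphi'u_A)$ indicates that \cite{BZ26} uses exactly this $\nu$, phrased through the induction functor $u_A^*=-\otimes_1A$.

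One small point in your optional direct sketch: checking $\f\p\f=\f$ needs, besides naturality of $\eta$, the triangle identity $h\otimes A=(N\otimes m_A)\bigl((\eta_N h)\otimes A\bigr)$. This is what turns $\eta_N\nu_N\eta_N f=\eta_N f$ into $(\nu_N\eta_N f)\otimes A=f\otimes A$.
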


\begin{proposition}\cite[Proposition 4.16]{BZ26}\label{coinduction functor}
Let $\cc$ be a monoidal category and $C$ be a coalgebra in $\cc$. Then, the following assertions are equivalent.
\begin{enumerate}
\item[$(i)$] The counit $\varepsilon_{C}:C\to 1$ is regular (resp., split-epi, split-mono) as a morphism in $\cc$.

\item[$(ii)$] the tensor functor $-\otimes C:\cc\to \cc^C$ is semiseparable (resp., separable, naturally full).

\item[$(iii)$] the tensor functor $C\otimes -:\cc\to ^C\!\!\cc$ is semiseparable (resp., separable, naturally full).
\end{enumerate} 
\end{proposition}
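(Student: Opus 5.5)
The proof will dualize the argument for Proposition \ref{induction functor}. It works through the adjunction $(F, -\otimes C)$, where $F:\cc^C\to\cc$ is the forgetful functor, with unit $\eta_{(N,\rho_N)}=\rho_N$ and counit $\epsilon_M=M\otimes\varepsilon_C$. The key tool is the standard criterion for a right adjoint $G$, which is the dual of the Rafael-type theorem (cf. \cite[Corollary 2.3]{AB22}). Such a $G$ is semiseparable (resp. separable, naturally full) if and only if the counit $\epsilon$ is regular (resp. split-epi, split-mono) in the functor category, i.e. there is a natural transformation $\nu:\id\to FG$ with $\epsilon\circ\nu\circ\epsilon=\epsilon$ (resp. $\epsilon\circ\nu=\id$, resp. $\nu\circ\epsilon=\id$). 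The plan therefore reduces $(ii)$ to a statement about natural transformations $\id_\cc\to -\otimes C$ that are compatible with $-\otimes\varepsilon_C$.

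For $(i)\Rightarrow(ii)$, let $g:1\to C$ satisfy $\varepsilon_C\circ g\circ\varepsilon_C=\varepsilon_C$ (resp. $\varepsilon_C\circ g=\id_1$, resp. $g\circ\varepsilon_C=\id_C$). Define $\nu_M:=M\otimes g:M\to M\otimes C$. This is clearly natural in $M$, and by strictness $\epsilon_M\circ\nu_M\circ\epsilon_M = M\otimes(\varepsilon_C g\varepsilon_C)=M\otimes\varepsilon_C=\epsilon_M$. The split cases follow identically. Hence $-\otimes C$ is semiseparable (resp. separable, naturally full).

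For $(ii)\Rightarrow(i)$, start from a natural $\nu$ satisfying the relevant identity and evaluate it at $M=1$. Set $g:=\nu_1:1\to 1\otimes C=C$. Since $\epsilon_1=\varepsilon_C$, the identity at component $1$ reads $\varepsilon_C\circ g\circ\varepsilon_C=\varepsilon_C$ (resp. the split versions). This direction only uses a single component, so it needs no naturality argument.

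The equivalence $(i)\Leftrightarrow(iii)$ follows by the same argument for $C\otimes -:\cc\to{}^C\cc$, taking $\nu_M=g\otimes M$, or alternatively by passing to the reversed monoidal category $\cc^{\mathrm{rev}}$, where $C$ is again a coalgebra with the same counit. The main obstacle is pinning down the correct form of the adjoint criterion. One must check that the semiseparability of $G=-\otimes C$ is equivalent to regularity of $\epsilon$ itself, rather than some conjugate of it, and that the "resp." correspondences match: split-epi $\epsilon$ corresponds to separable $G$, and split-mono $\epsilon$ to naturally full $G$. I would verify this by noting that $\p_{M,M'}(h)=\epsilon_{M'}\circ\nu_{M'}\circ F h\circ\ldots$ is obtained by transporting $h:M\otimes C\to M'\otimes C$ via the adjunction. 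Concretely, the hom-map of $G$ is $\Hom(M,M')\to\Hom(FGM,M')\cong\Hom(GM,GM')$, so precomposing with $\nu$ gives $\p$, and the identities translate directly.
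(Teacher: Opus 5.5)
Your proof is correct, and it takes a more direct route than the paper. The paper gives no argument of its own here and simply imports the statement from \cite{BZ26}. Judging from its later remark, which identifies $-\otimes A$ with the induction functor $u_A^*=-\otimes_1 A$ and computes the idempotent $e_M=\id_M\otimes\psi'\psi$, the cited result comes from specialising a general criterion for (co)induction functors along (co)algebra morphisms to $\varepsilon_C:C\to 1$.

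You instead apply the Rafael-type criterion directly to the right adjoint $-\otimes C$ of the forgetful functor $\cc^C\to\cc$. From a quasi-inverse $g$ of $\varepsilon_C$ you build $\nu_M=M\otimes g$, and conversely you recover $g=\nu_1$ from a given $\nu$. The natural transformation you obtain is presumably what the specialisation also produces. Your route, however, needs no bicomodule or cotensor-product machinery, so it visibly works for an arbitrary strict monoidal category, matching the generality of the statement. Your treatment of $(iii)$, either directly with $g\otimes M$ or via $\cc^{\mathrm{rev}}$, is also fine.

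There is one small slip in your last paragraph. The transported map should be $\p_{M,M'}(h)=\epsilon_{M'}\circ Fh\circ\nu_M$; the expression $\epsilon_{M'}\circ\nu_{M'}\circ Fh$ does not typecheck. In the semiseparable case, one then uses the triangle identity $G\epsilon\circ\eta G=\id_G$ to get $G(\epsilon_M\circ\nu_M)=\id_{GM}$ from $\epsilon\circ\nu\circ\epsilon=\epsilon$. Since you cite the standard criterion anyway, this does not affect the proof.
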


\begin{definition}\cite[Definition 1.5.1]{Eti15}
Let $\cc$ be an abelian category. A nonzero object $X$ in $\cc$ is called simple if $0$ and $X$ are its only subobjects. An object $X$ in $\cc$ is called semisimple if it is a direct sum of simple objects, and $\cc$ is called semisimple if every object of $\cc$ is semisimple.    
\end{definition}

In this paper, an abelian monoidal category is a monoidal category which is abelian with additive tensor product.

\begin{definition}\cite[Definition 4.1.1, Definition 4.2.3]{Eti15}
\begin{enumerate}
\item A multiring category $\cc$ over $\Bbbk$ is a locally finite $\Bbbk$-linear abelian monoidal category with bilinear and biexact tensor product. If in addition $\mathrm{End}_{\cc}(1) = \Bbbk$, we will call $\cc$ a ring category.
 
\item A multitensor category $\cc$ over $\Bbbk$ is a locally finite $\Bbbk$-linear abelian rigid monoidal category with bilinear tensor product. If in addition $\mathrm{End}_{\cc}(1) = \Bbbk$, and $\cc$ is indecomposable i.e. $\cc$ is not equivalent to a direct sum of nonzero multitensor categories, then we will call $\cc$ a tensor category.

\item A multifusion category is a finite semisimple multitensor category. A fusion
category is a multifusion category with $\mathrm{End}_{\cc}(1) = \Bbbk$, i.e., a finite semisimple tensor
category.
\end{enumerate}
\end{definition}

In fact, the tensor product of a multitensor category is always biexact, see \cite[Proposition 4.2.1]{Eti15}. This implies a multitensor category is always a multiring category.

\section{Semiseparability in a semisimple abelian monoidal category}\label{Semiseparability in a semisimple abelian monoidal category}

In this section, we study the separability (Proposition \ref{semisimple induction separable}) and semiseparability (Proposition \ref{semisimple semiseparable}) of the functor $-\otimes A: \cc \to \cc_A$ for any algebra $A$ in $\cc$, where $\cc$ is a semisimple abelian monoidal category with biexact tensor product. First, recall that a morphism $f$ is called regular if there is a morphism $g$ such that $f\circ g\circ f=f$.

\begin{lemma}\label{semisimple regular}
An abelian monoidal category $\cc$ is semisimple if and only if every morphism in $\cc$ is regular.
\end{lemma}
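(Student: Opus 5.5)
Both directions reduce to the standard characterization of semisimple abelian categories: $\cc$ is semisimple if and only if every monomorphism splits (equivalently, every short exact sequence splits). The monoidal structure plays no role, so we argue purely in the abelian category $\cc$.

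For the forward direction, assume $\cc$ is semisimple and let $f:X\to Y$ be a morphism. Factor it through its image as $f=i\circ p$, with $p:X\to \im f$ epi and $i:\im f\to Y$ mono. In a semisimple category every subobject is a direct summand. Indeed, if $Y=\oplus_k S_k$ is a sum of simples and $U\subseteq Y$, choose a maximal family of the $S_k$ meeting $U$ trivially, with sum $V$. Then $U\oplus V\to Y$ is mono, and maximality together with simplicity forces every $S_k$ into $U\oplus V$, so $U\oplus V=Y$. (For infinite sums one could instead invoke Zorn's lemma, but in our locally finite settings the sums are finite anyway.) Hence $i$ has a retraction $r:Y\to \im f$ with $r\circ i=\id$. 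Dually, $\ker p\subseteq X$ is a direct summand, so $p$ has a section $s:\im f\to X$ with $p\circ s=\id$. Setting $g:=s\circ r$, we get
\[ f\circ g\circ f = i\,p\,s\,r\,i\,p = i\,p = f. \]

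For the converse, assume every morphism is regular. We show every monomorphism splits. If $i:U\to Y$ is mono and $g$ satisfies $i\circ g\circ i=i$, then cancelling the mono gives $g\circ i=\id_U$, so $i$ is split. Consequently every subobject is a direct summand. Now take any object $X$. The paper's objects have finite length in the intended contexts, but to be safe we argue as follows. If $X\neq 0$ is not simple, it has a proper nonzero subobject, which splits off: $X\cong U\oplus U'$ with both summands nonzero. Iterating this decomposition ends after finitely many steps by finite length, leaving $X$ a direct sum of simples.

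The main obstacle is this termination step in the converse. The paper's definition of semisimple does not assume finite length, so in a general abelian category "every subobject is a complement" need not yield a decomposition into simples; for example, a category could lack simple objects altogether. I would handle this by noting that the categories considered in the paper are locally finite, where every object has finite length and the induction on length applies. If a fully general statement is wanted, I would instead interpret semisimplicity as "every short exact sequence splits", with which the argument above gives an exact equivalence.
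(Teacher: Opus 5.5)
Your argument is the paper's: you split both factors of the image factorization $f=i\circ p$ to get $f\circ(s\circ r)\circ f=f$, and you cancel the monomorphism in $i\circ g\circ i=i$ to split every mono (you also justify that subobjects are direct summands, which the paper simply asserts). Your caveat about the converse is well founded: the paper goes from ``every monomorphism splits'' straight to ``semisimple'', which needs the finite-length induction you supply, and without such a hypothesis the converse genuinely fails, e.g.\ for finitely generated projective modules (with $\otimes_R$) over the commutative von Neumann regular ring $R$ of locally constant $\Bbbk$-valued functions on the Cantor set, where every morphism is regular but there are no simple objects, so $R$ itself is not a direct sum of simples.
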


\begin{proof}
$\Rightarrow$ For any morphism $f: X \to Y$ in $\cc$, consider the image factorization of $f=\varphi \circ \psi$, where $\psi:X \to \im (f)$ is an epimorphism and $\varphi:\im (f) \to Y$ is a monomorphism. Since $\cc$ is semisimple, there are morphisms $\psi':\im (f) \to X$ and $\varphi':Y \to \im (f)$ such that $\psi \circ \psi' = \id_{\im (f)} = \varphi' \circ \varphi$. Thus, $f \psi' \varphi' f = \varphi \psi \psi' \varphi' \varphi \psi = \varphi \psi = f$, i.e. $f$ is regular. 

$\Leftarrow$ Suppose every morphism in $\cc$ is regular. We have every monomorphism is split-mono and every epimorphism is split-epi. Hence, $\cc$ is semisimple.
\end{proof}

\begin{remark}
In the category $\Set$ of sets, the requirement that every morphism is regular is equivalent to the Axiom of choice, see \cite[Example 3.9]{AB22}. However, $\Set$ is not an abelian category. 
\end{remark}

\begin{proposition}\label{semisimple induction separable}
Let $\cc$ be a semisimple abelian monoidal category with biexact tensor product such that the unit object is simple. Then, 
\begin{enumerate}[$(1)$]
\item for any non-zero algebra $A$ in $\cc$, the tensor functors $-\otimes A:\cc \to \cc_A$ and $A\otimes -: \cc \to _A\!\cc$ are separable; 

\item for any non-zero coalgebra $C$ in $\cc$, the tensor functors $-\otimes C:\cc\to \cc^C$ and $C\otimes -:\cc\to ^C\!\!\cc$ are separable. 
\end{enumerate}
\end{proposition}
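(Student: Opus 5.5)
By Proposition \ref{induction functor}, part (1) reduces to showing that the unit $u_A:1\to A$ is split-mono in $\cc$. Likewise, by Proposition \ref{coinduction functor}, part (2) reduces to showing that the counit $\varepsilon_C:C\to 1$ is split-epi in $\cc$. Since $\cc$ is semisimple, every monomorphism splits and every epimorphism splits: a subobject of a semisimple object is a direct summand, which is the content used in the proof of Lemma \ref{semisimple regular}. So it suffices to prove that $u_A$ is a monomorphism and $\varepsilon_C$ is an epimorphism.

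For the algebra case, $1$ is simple, so $\ker(u_A)$ is a subobject of $1$ and hence either $0$ or $1$. If $\ker(u_A)=1$, then $u_A=0$. The unit axiom $m_A\circ(u_A\otimes A)=\id_A$ together with additivity (bilinearity) of $\otimes$ then gives $\id_A = m_A\circ(0\otimes A)=0$, so $A=0$, contradicting $A\neq 0$. Hence $\ker(u_A)=0$, and $u_A$ is a monomorphism in the abelian category $\cc$. Semisimplicity gives a retraction $r:A\to 1$ with $r\circ u_A=\id_1$, and Proposition \ref{induction functor} yields separability of both $-\otimes A$ and $A\otimes -$.

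Dually, for the coalgebra case, $\im(\varepsilon_C)$ is a subobject of the simple object $1$, so either $\varepsilon_C=0$ or $\varepsilon_C$ is an epimorphism. If $\varepsilon_C=0$, the counit axiom $(\varepsilon_C\otimes C)\circ\Delta_C=\id_C$ forces $\id_C=0$, so $C=0$, a contradiction. Thus $\varepsilon_C$ is an epimorphism and splits by semisimplicity, and Proposition \ref{coinduction functor} gives separability of $-\otimes C$ and $C\otimes -$.

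The main point needing care is the step "$u_A=0\Rightarrow A=0$". It uses that $0\otimes A\cong 0$ and that $f\otimes A$ is additive in $f$. Both hold because the tensor product is additive; biexactness is not really needed for this step. The rest is formal.
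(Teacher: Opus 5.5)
Your proof is correct and follows the paper's argument: simplicity of $1$ makes $u_A$ a monomorphism (resp.\ $\varepsilon_C$ an epimorphism), semisimplicity makes it split, and Propositions \ref{induction functor} and \ref{coinduction functor} give separability. You also spell out why $u_A\neq 0$ (resp.\ $\varepsilon_C\neq 0$) using the unit (resp.\ counit) axiom, a step the paper leaves implicit, and you correctly note that biexactness is not needed here.
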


\begin{proof}
$(1)$ Since the unit object $1$ is simple, $u_A: 1 \to A$ is a monomorphism. Note that $\cc$ is semisimple, which means $u_A$ is split-mono. By Proposition \ref{induction functor}, $-\otimes A$ and $A\otimes -$ are separable. $(2)$ It follows similarly to $(1)$, by Proposition \ref{coinduction functor}.
\end{proof}

\begin{proposition}\label{semisimple semiseparable}
Let $\cc$ be a semisimple abelian monoidal category with biexact tensor product. Then, 
\begin{enumerate}[$(1)$]
    \item for any algebra $A$ in $\cc$, the tensor functors $-\otimes A: \cc \to \cc_A$ and $A\otimes -:\cc \to _A\!\cc$ are semiseparable; 
    
    \item for any coalgebra $C$ in $\cc$, the tensor functors $-\otimes C:\cc\to \cc^C$ and $C\otimes -:\cc\to ^C\!\!\cc$ are semiseparable. 
\end{enumerate}
\end{proposition}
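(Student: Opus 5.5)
The plan is to reduce the statement entirely to the characterizations in Proposition \ref{induction functor} and Proposition \ref{coinduction functor}, combined with Lemma \ref{semisimple regular}.

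For $(1)$, let $A$ be any algebra in $\cc$ and consider its unit $u_A:1\to A$. This is a morphism in $\cc$, and $\cc$ is semisimple. Hence, by Lemma \ref{semisimple regular} (direction $\Rightarrow$), $u_A$ is regular: there is $g:A\to 1$ with $u_A\circ g\circ u_A=u_A$. The regular case of Proposition \ref{induction functor}, implication $(i)\Rightarrow(ii)$ and $(i)\Rightarrow(iii)$, then gives that both $-\otimes A:\cc\to\cc_A$ and $A\otimes -:\cc\to {}_A\cc$ are semiseparable.

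For $(2)$, the argument is dual. For any coalgebra $C$, the counit $\varepsilon_C:C\to 1$ is a morphism in the semisimple category $\cc$, so it is regular by Lemma \ref{semisimple regular}. Proposition \ref{coinduction functor} then yields semiseparability of $-\otimes C:\cc\to\cc^C$ and of $C\otimes -:\cc\to{}^C\cc$.

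There is essentially no obstacle, since the work has been done in the cited characterizations. The only point to check is that Lemma \ref{semisimple regular} applies verbatim: its proof uses only the image factorization in the abelian category and the splitting of monos and epis in a semisimple category. Neither simplicity of $1$ nor nonzeroness of $A$ or $C$ is needed. In particular, the zero algebra is covered too, since $0$ is trivially regular. Biexactness of $\otimes$ is not used in this step; it is only part of the ambient hypotheses.
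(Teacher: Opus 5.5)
Your proposal is correct and matches the paper's proof: Lemma~\ref{semisimple regular} gives that $u_A$ (resp.\ $\varepsilon_C$) is regular, and the regular case of Proposition~\ref{induction functor} (resp.\ Proposition~\ref{coinduction functor}) then gives semiseparability of both tensor functors. Your side remarks also hold: biexactness is not needed for this step, and the zero (co)algebra is covered.
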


\begin{proof}
$(1)$ By Lemma \ref{semisimple regular}, $u_A$ is regular. By Proposition \ref{induction functor}, $-\otimes A$ and $A\otimes -$ are semiseparable. $(2)$ It follows similarly to $(1)$, by Proposition \ref{coinduction functor}.
\end{proof}

Our next goal is to study the converse of Proposition \ref{semisimple induction separable}. More specifically, suppose the tensor functors are separable for any non-zero (co)algebra, we want to know if the unit object is simple. 

\begin{lemma}\label{lemma iso}
Let $\cc$ be a semisimple abelian monoidal category such that the unit object $1$ has finite length. Let $X$ be a non-zero subobject of $1$. Suppose either of the following conditions is satisfied:
\begin{enumerate}[$(1)$]
    \item $X$ is an algebra in $\cc$, and the tensor functor $-\otimes A:\cc \to \cc_A$ is separable for any non-zero algebra $A$ in $\cc$;
    
    \item $X$ is a coalgebra in $\cc$, and the tensor functor $-\otimes C:\cc\to \cc^C$ is separable for any non-zero coalgebra $C$ in $\cc$.
\end{enumerate}
Then, $X$ is isomorphic to $1$.
\end{lemma}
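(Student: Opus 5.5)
The plan is to use Proposition~\ref{induction functor} (resp.\ Proposition~\ref{coinduction functor}) to turn the separability hypothesis into a split morphism between $X$ and $1$. We then compare lengths, using that $X$ is already a subobject of $1$.

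\emph{Case (1).} Since $X$ is a non-zero subobject of $1$, it is a non-zero algebra, so the hypothesis gives that $-\otimes X:\cc\to\cc_X$ is separable. By Proposition~\ref{induction functor}, the unit $u_X:1\to X$ is split-mono. Hence $1$ is isomorphic to a direct summand of $X$, and so $\ell(1)\le \ell(X)$, where $\ell$ denotes length. The length of $X$ is finite because $X$ is a subobject of $1$, which has finite length. Let $i:X\to 1$ be the monomorphism exhibiting $X$ as a subobject of $1$. From the short exact sequence $0\to X\xrightarrow{i} 1\to \coker(i)\to 0$ and additivity of length (Jordan--H\"older in an abelian category), we get
\[
\ell(\coker(i))=\ell(1)-\ell(X)\le 0 .
\]
So $\coker(i)=0$. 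Then $i$ is both mono and epi, hence an isomorphism in the abelian category $\cc$, and $X\cong 1$.

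\emph{Case (2).} The argument is dual. Separability of $-\otimes X:\cc\to\cc^X$ gives, by Proposition~\ref{coinduction functor}, that the counit $\varepsilon_X:X\to 1$ is split-epi. Thus $1$ is again a direct summand of $X$, so $\ell(1)\le\ell(X)$. The same cokernel argument applied to the inclusion $X\hookrightarrow 1$ then gives $X\cong 1$.

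The only step that needs care is the length comparison. It uses that $1$ has finite length, so that all the lengths involved are finite, and that length is additive on short exact sequences. Semisimplicity is used only implicitly, and is not strictly needed beyond this, since the split morphisms already come from the cited propositions. I expect no real obstacle beyond checking that a direct summand of $X$ has length at most $\ell(X)$, which also follows from additivity.
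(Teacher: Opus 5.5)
Your proof is correct. It rests on the same counting idea as the paper's, but you phrase it through lengths instead of simple summands.

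The paper uses semisimplicity to write $1$ and $X$ as finite direct sums of simple objects. The multiplicities in $X$ are bounded above by those in $1$, because $X$ is a subobject of $1$. They are bounded below by them, because $u_X$ is split-mono by Proposition~\ref{induction functor}. The paper concludes that $u_X$ itself is an isomorphism. You instead use additivity of Jordan--H\"older length on $0\to X\to 1\to\coker(i)\to 0$, and conclude that the inclusion $i$ is an isomorphism.

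Your version buys generality. Semisimplicity is not used anywhere in it, so your remark that it is used ``implicitly'' is unnecessary. The lemma therefore holds in any abelian monoidal category in which $1$ has finite length. The paper's version names the specific isomorphism $u_X$, but you recover this as well: $X\cong 1\oplus Y$ with $\ell(Y)=\ell(X)-\ell(1)=0$, so $Y=0$.
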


\begin{proof}
$(1)$ Because $\cc$ is semisimple and $1$ has finite length, $1$ is the direct sum of finitely many simple objects. Since $X$ is a subobject of $1$, $X$ is isomorphic to the direct sum of simple subobjects of $1$ whose multiplicity is not greater than the multiplicity in the direct sum decomposition of $1$. By Proposition \ref{induction functor}, $u_X: 1 \to X$ is split-mono, which means $1$ is a subobject of $X$. Therefore, the multiplicities of simple objects in the direct sum decompositions of $1$ and $X$ are exactly the same. As a consequence, $u_X: 1 \to X$ is an isomorphism. $(2)$ It follows similarly to $(1)$, by Proposition \ref{coinduction functor}.
\end{proof}

Using this lemma and combining with the previous propositions, we obtain following results directly.

\begin{proposition}
Let $\cc$ be a semisimple abelian monoidal category such that the unit object $1$ has finite length. Consider the following assertions:
\begin{enumerate}[$(1)$]
\item the unit object is simple.

\item for any non-zero algebra $A$ in $\cc$, the tensor functor $-\otimes A:\cc \to \cc_A$ is separable.

\item the unit object is simple in $\mathrm{Alg}(\cc)$.
\end{enumerate} 
Then, $(1)$ implies $(2)$, and $(2)$ implies $(3)$.
\end{proposition}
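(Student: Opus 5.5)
The plan is to prove the two implications separately, each by reducing to the characterization of separability in Proposition \ref{induction functor} and, for the second implication, to Lemma \ref{lemma iso}. Note that Proposition \ref{semisimple induction separable} assumes a biexact tensor product, which is not available here, so for $(1)\Rightarrow(2)$ I will redo its short argument directly rather than quote it.

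\emph{$(1)\Rightarrow(2)$.} Let $A$ be a non-zero algebra. First, $u_A\neq 0$: otherwise the unit axiom $\mathrm{id}_A=m_A\circ(A\otimes u_A)$ would give $\mathrm{id}_A=0$, since the tensor product is additive. Now $\ker(u_A)$ is a subobject of the simple object $1$, and it is proper because $u_A\neq 0$. Hence $\ker(u_A)=0$, so $u_A$ is a monomorphism. Since $\cc$ is semisimple, every monomorphism splits; this is the argument in the proof of Lemma \ref{semisimple regular}. So $u_A$ is split-mono, and Proposition \ref{induction functor} shows that $-\otimes A:\cc\to\cc_A$ is separable. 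This argument uses only simplicity of $1$ and semisimplicity, not biexactness.

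\emph{$(2)\Rightarrow(3)$.} I read ``$1$ is simple in $\mathrm{Alg}(\cc)$'' as follows: whenever $i:X\to 1$ is an algebra morphism that is a monomorphism in $\cc$, with $X$ non-zero, $i$ is an isomorphism. Such an $X$ is a non-zero subobject of $1$ carrying an algebra structure. Since $1$ has finite length and $(2)$ holds, Lemma \ref{lemma iso}(1) applies. Its proof gives more than an abstract isomorphism $X\cong 1$: it shows that $u_X:1\to X$ is split-mono. Because $i$ is an algebra morphism, $i\circ u_X=u_1=\mathrm{id}_1$, so $i$ is split-epi. A morphism that is both a monomorphism and split-epi is an isomorphism, so $i$ is an isomorphism, as required.

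The main subtlety is this last step. Lemma \ref{lemma iso} is stated only as ``$X$ is isomorphic to $1$'', whereas simplicity in $\mathrm{Alg}(\cc)$ requires the given inclusion to be invertible. The identity $i\circ u_X=\mathrm{id}_1$ closes this gap. The other point to check is $u_A\neq0$ in the first implication, which the unit axiom settles.
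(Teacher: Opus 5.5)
Your $(1)\Rightarrow(2)$ is the paper's argument. The paper just cites Proposition~\ref{semisimple induction separable}, whose proof indeed never uses biexactness, and your check that $u_A\neq 0$ fills a step that proof leaves implicit. For $(2)\Rightarrow(3)$, the paper's whole proof is your first sentence: apply Lemma~\ref{lemma iso}(1) to a non-zero subobject $X$ of $1$ carrying an algebra structure, and conclude $X\cong 1$.

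Nothing you add after that is false, but your reading of (3) empties it of content. Suppose the inclusion $i\colon X\to 1$ is required to be an algebra morphism. Then $i\circ u_X=\mathrm{id}_1$ together with $i$ mono already makes $i$ invertible in any monoidal category: from $i(u_Xi)=i$ one gets $u_Xi=\mathrm{id}_X$. The split-mono property of $u_X$ is never used, and it is the only place where (2) enters. So under your reading (3) holds unconditionally.

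The intended content is that every non-zero subobject of $1$ in $\cc$ admitting \emph{some} algebra structure is isomorphic to $1$. No compatibility is assumed between the inclusion and that structure. This is what Lemma~\ref{lemma iso} states, and what the discussion after the coalgebra version relies on. It is also how the result is used later. In Lemma~\ref{algebra} the summand $1_i\subseteq 1$ is an algebra with unit $p_i$. The inclusion $i_i$ is not an algebra map, since $i_ip_i\neq\mathrm{id}_1$ when $1$ is not simple. In that setting your identity $i\circ u_X=\mathrm{id}_1$ is unavailable.

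If you want the given inclusion itself to be invertible, argue by length instead. A monomorphism $X\to 1$ with $X\cong 1$ and $1$ of finite length has cokernel of length $0$, hence is an isomorphism.
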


\begin{proof}
$(1)\Rightarrow (2)$: It is Proposition \ref{semisimple induction separable}. $(2) \Rightarrow (3)$ Suppose $X$ is a non-zero subobject of $1$ in $\mathrm{Alg}(\cc)$. By lemma \ref{lemma iso}, $X$ is isomorphic to $1$. Hence, the unit object is simple in $\mathrm{Alg}(\cc)$.
\end{proof}

Similarly, we can consider the coalgebra case. 

\begin{proposition}
Let $\cc$ be a semisimple abelian monoidal category such that the unit object $1$ has finite length. Consider the following assertions:
\begin{enumerate}[$(1)$]
\item the unit object is simple.

\item for any non-zero coalgebra $C$ in $\cc$, the tensor functor $-\otimes C:\cc\to \cc^C$ is separable.

\item the unit object is simple in $\mathrm{Coalg}(\cc)$.
\end{enumerate} 
Then, $(1)$ implies $(2)$, and $(2)$ implies $(3)$.
\end{proposition}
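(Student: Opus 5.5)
The plan mirrors the proof of the algebra version just above, with Proposition \ref{coinduction functor} and part $(2)$ of Lemma \ref{lemma iso} taking the places of their algebra counterparts.

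\emph{Step $(1)\Rightarrow(2)$.} This is part $(2)$ of Proposition \ref{semisimple induction separable}. I would also recall the argument, since that proposition assumes a biexact tensor product, which is not among the hypotheses here. Let $C$ be a non-zero coalgebra. Its counit $\varepsilon_C: C\to 1$ is non-zero: by the counit axiom $(\varepsilon_C\otimes C)\circ\Delta_C=\id_C\neq 0$, so $\varepsilon_C\ne 0$ by additivity of the tensor product. Since $1$ is simple, the image of $\varepsilon_C$ is a non-zero subobject of $1$, hence equal to $1$. So $\varepsilon_C$ is an epimorphism. By semisimplicity (Lemma \ref{semisimple regular}, or directly) every epimorphism splits, so $\varepsilon_C$ is split-epi. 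Proposition \ref{coinduction functor} then gives that $-\otimes C:\cc\to\cc^C$ is separable. Only additivity and semisimplicity are used, so biexactness is not actually needed.

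\emph{Step $(2)\Rightarrow(3)$.} Let $X$ be a non-zero subobject of $1$ in $\mathrm{Coalg}(\cc)$, meaning a coalgebra $X$ with a coalgebra monomorphism $i:X\to 1$. It is also a non-zero subobject of $1$ in $\cc$. Part $(2)$ of Lemma \ref{lemma iso} applies and gives $X\cong 1$; we may take this isomorphism to be $i$ itself, which is then an isomorphism of coalgebras. Concretely, the lemma's mechanism is as follows. Separability of $-\otimes X$ gives, via Proposition \ref{coinduction functor}, that $\varepsilon_X$ is split-epi, so $1$ is a direct summand of $X$. Since $X$ is also a subobject of $1$ and $1$ has finite length, counting multiplicities of simple summands shows that $\varepsilon_X$ is an isomorphism. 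Because $i$ is a coalgebra map, $\varepsilon_1\circ i=\varepsilon_X$, and $\varepsilon_1=\id_1$, so $i=\varepsilon_X$ is an isomorphism. Hence $1$ has no proper non-zero subcoalgebras, i.e.\ it is simple in $\mathrm{Coalg}(\cc)$.

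\emph{Main obstacle.} The one point needing care is making precise what ``subobject in $\mathrm{Coalg}(\cc)$'' means, so that the subobject in $\cc$ and the coalgebra structure are compatible. I would read it as a coalgebra $X$ with a coalgebra morphism $i:X\to 1$ that is a monomorphism in $\cc$; the identification $i=\varepsilon_X$ then makes the conclusion immediate. Everything else is a direct transcription of the algebra case.
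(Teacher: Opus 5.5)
Your proposal is correct and follows the paper's route: $(1)\Rightarrow(2)$ via Proposition \ref{semisimple induction separable}(2) (nonzero counit, hence epi, hence split-epi, then Proposition \ref{coinduction functor}), and $(2)\Rightarrow(3)$ via Lemma \ref{lemma iso}(2). Your two extra remarks are valid refinements the paper leaves implicit: biexactness is never used, and any coalgebra map $i:X\to 1$ must equal $\varepsilon_X$, so under your reading mono plus split-epi already gives an isomorphism without the finite-length count.
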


Our motivation is finding the enviroment such that $(2)$ implies $(1)$. Now we know that $(2)$ implies $(3)$. This means that we must focus on studying the subobjects of $1$. Is there an enviroment such that all the subobjects of $1$ are both algebras and coalgebras? In the next section, we turn our attention to multifusion category.

\section{Multifusion category}\label{Multifusion category}

Let $\cc$ be a multiring category with left duals. By \cite[Theorem 4.3.8 (ii)]{Eti15}, the unit object $1$ of $\cc$ is semisimple,
and it is a direct sum of pairwise non-isomorphic simple objects $1_i$. In fact, let $\{ p_i\}_{i \in I}$ be the primitive idempotents of the algebra $\mathrm{End}(1)$, $1_i$ is the image of $p_i$. In the following, we write $1 = \oplus_{i\in I} 1_i$, and denote the canonical injection and projection by $i_i:1_i \to 1$ and $p_i:1 \to 1_i$, respectively. Note also that $1_i \otimes 1_j =0$ for $i \not= j$, see \cite[Exercise 4.3.3]{Eti15}. Besides, by \cite[Remark 4.3.4]{Eti15}, there is a decomposition $\cc = \oplus_{i,j \in I} \cc_{ij}$ where $\cc_{ij} = 1_i \otimes \cc \otimes 1_j$, and the categories $\cc_{ii}$ are ring categories with unit object $1_i$.

\begin{lemma}\label{algebra}
For $i \in I$, $1_i$ is both an algebra and a coalgebra in $\cc$.
\end{lemma}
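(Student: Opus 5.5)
We have to show that $1_i$ carries an algebra structure and a coalgebra structure. The idea is that $1_i$ is a direct summand of $1$ cut out by the idempotent $p_i$, and $1_i\otimes 1_i$ is identified with $1_i$ through the tensor decomposition of $1$. Write $\pi_i:1\to 1_i$ for the projection and $\iota_i:1_i\to 1$ for the injection (the paper calls these $p_i$ and $i_i$). Then $\pi_i\circ\iota_i=\id_{1_i}$ and $\sum_j \iota_j\circ\pi_j=\id_1$.

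\textbf{Unit and counit.} Set $u:=\pi_i:1\to 1_i$ and $\varepsilon:=\iota_i:1_i\to 1$.

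\textbf{Multiplication and comultiplication.} Since $\cc$ is strict, $1_i = 1_i\otimes 1 = 1_i\otimes(\oplus_j 1_j)=\oplus_j (1_i\otimes 1_j)$. Because $1_i\otimes 1_j=0$ for $j\neq i$ (\cite[Exercise 4.3.3]{Eti15}), the morphism
\[
1_i\otimes\pi_i : 1_i=1_i\otimes 1\to 1_i\otimes 1_i
\]
is an isomorphism, with inverse $1_i\otimes\iota_i$. Indeed, $(1_i\otimes\pi_i)(1_i\otimes\iota_i)=\id$, and $(1_i\otimes\iota_i)(1_i\otimes\pi_i)=1_i\otimes(\iota_i\pi_i)=\id_{1_i}-\sum_{j\neq i}1_i\otimes(\iota_j\pi_j)=\id_{1_i}$, since each $1_i\otimes\iota_j\pi_j$ factors through $1_i\otimes 1_j=0$. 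This uses only additivity of $\otimes$. Define
\[
m:=1_i\otimes\iota_i:1_i\otimes 1_i\to 1_i,\qquad \Delta:=1_i\otimes\pi_i:1_i\to 1_i\otimes 1_i .
\]
By the symmetric argument, $\pi_i\otimes 1_i$ is also an isomorphism with inverse $\iota_i\otimes 1_i$.

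\textbf{Verifying the axioms.}
\begin{itemize}
\item \emph{Right unit law.} $m\circ(1_i\otimes u)=(1_i\otimes\iota_i)(1_i\otimes\pi_i)=\id$.
\item \emph{Left unit law.} We need $m\circ(u\otimes 1_i)=(1_i\otimes\iota_i)(\pi_i\otimes 1_i)=\id$. Interchange gives $(1_i\otimes\iota_i)(\pi_i\otimes 1_i)=\pi_i\otimes\iota_i=(\pi_i\otimes 1_i)(1\otimes\iota_i)$ as maps $1\otimes 1_i\to 1_i\otimes 1$, i.e.\ $1_i\to 1_i$. Both sides are the unique inverse-type maps, so the cleanest route is to show $1_i\otimes\iota_i=\iota_i\otimes 1_i$ as maps $1_i\otimes 1_i\to 1_i$. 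Both are left inverses of an isomorphism: $1_i\otimes\iota_i$ is inverse to $1_i\otimes\pi_i$, and $\iota_i\otimes 1_i$ is inverse to $\pi_i\otimes 1_i$. It therefore suffices to check $1_i\otimes\pi_i=\pi_i\otimes 1_i$ as maps $1_i\to 1_i\otimes1_i$. Composing each with the iso $1_i\otimes\iota_i$ gives $\id$ and $\pi_i\otimes\iota_i$ respectively. Since $\pi_i\otimes\iota_i=\pi_i\iota_i\otimes$-type interchange equals $(\pi_i\otimes 1_i)(1\otimes\iota_i)$, and the source $1\otimes 1_i=1_i$, this reduces to the standard fact that $f\otimes 1=1\otimes f$ for endomorphisms in a strict monoidal category with $\mathrm{End}(1)$ acting centrally. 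I expect this identification of the left and right ``absorption'' isomorphisms to be the main obstacle. If the interchange argument becomes messy, I would instead use that $\mathrm{End}(1)$ is commutative and $e\otimes 1_X = 1_X\otimes e$ for $e\in\mathrm{End}(1)$ via naturality of the unitors. Applied to $e=\iota_i\pi_i$ on $X=1_i$, this gives $1_i\otimes \iota_i\pi_i=\iota_i\pi_i\otimes 1_i$, from which $1_i\otimes\pi_i=\pi_i\otimes 1_i$ follows after the appropriate identifications.
\item \emph{Associativity.} Both composites $1_i^{\otimes3}\to1_i$ equal $1_i\otimes\iota_i\otimes\iota_i$ (using $m=\iota_i\otimes 1_i$ where needed), so associativity holds.
\item \emph{Coalgebra axioms.} The coassociativity and counit laws for $(1_i,\Delta,\varepsilon)$ follow dually, since $\Delta=m^{-1}$ and $\varepsilon$ is a section of $u$ via $\pi_i\iota_i=\id$. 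The counit laws read $(1_i\otimes\iota_i)(1_i\otimes\pi_i)=\id$ and the mirrored identity, which are exactly the computations above.
\end{itemize}
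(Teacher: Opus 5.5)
Your structure maps are the paper's. In the strict setting its $m_{1_i}=r_{1_i}\tau^{-1}(p^r_i)^{-1}$ is just $\id_{1_i}\otimes i_i$, with $u_{1_i}=p_i$, $\Delta_{1_i}=m_{1_i}^{-1}$ and $\varepsilon_{1_i}=i_i$. Your proof that $\id_{1_i}\otimes\pi_i$ is invertible, the right unit law, associativity and coassociativity are all fine.

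The gap is at the step you flag yourself: the left unit law, and with it the left counit law, which you defer to ``the computations above''. Neither of your routes proves it.
\begin{itemize}
\item The ``standard fact'' $e\otimes\id_X=\id_X\otimes e$ for $e\in\mathrm{End}(1)$ is false in general here. If $0\neq X\in\cc_{jk}$ with $j\neq k$ (e.g.\ bimodules over $\Bbbk\times\Bbbk$), then $e=\iota_j\pi_j$ gives $e\otimes\id_X=\id_X$ but $\id_X\otimes e=0$.
\item The special case $X=1_i$ does hold, but it only says $\id_{1_i}\otimes\iota_i\pi_i=\iota_i\pi_i\otimes\id_{1_i}$. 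Both sides are already known to equal $\id_{1_i}$ from your isomorphism computations. So it adds nothing and says nothing about the mixed map $\pi_i\otimes\iota_i$; the claimed consequence $1_i\otimes\pi_i=\pi_i\otimes 1_i$ does not follow.
\item The ``inverse-type maps'' route is circular. You reduce $1_i\otimes\pi_i=\pi_i\otimes 1_i$ to $\pi_i\otimes\iota_i=\id_{1_i}$, which is the left unit law itself.
\end{itemize}

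The missing step is one line, and your interchange display was a typo away from it. The factor $\pi_i\otimes 1_i$ there should be $\pi_i\otimes\id_1$; as written the composite is ill-typed. Strictness means the unitors are identities, so their naturality gives $f\otimes\id_1=f=\id_1\otimes f$ for every morphism $f$. Hence
\[
m(u\otimes\id_{1_i})=(\id_{1_i}\otimes\iota_i)(\pi_i\otimes\id_{1_i})=\pi_i\otimes\iota_i=(\pi_i\otimes\id_1)(\id_1\otimes\iota_i)=\pi_i\iota_i=\id_{1_i},
\]
and symmetrically $(\varepsilon\otimes\id_{1_i})\Delta=\iota_i\otimes\pi_i=(\id_1\otimes\pi_i)(\iota_i\otimes\id_1)=\pi_i\iota_i=\id_{1_i}$.

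In particular $m=(\pi_i\otimes\id_{1_i})^{-1}=\iota_i\otimes\id_{1_i}$. Associativity does not actually need this, since both composites equal $\id_{1_i}\otimes\iota_i\otimes\iota_i$ by the same interchange trick.

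This line plays the role of the paper's step $r_{1_i}=l_{1_i}\sigma(p^l_i)^{-1}p^r_i\tau$, i.e.\ that the right-hand and left-hand descriptions of $m_{1_i}$ agree. The paper derives that from $i_i\otimes\id_1=(\id_1\otimes i_i)\sigma(p^l_i)^{-1}p^r_i\tau$ and naturality of the unitors. With this line inserted, your proof is complete and somewhat shorter than the paper's.
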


\begin{proof}
Denote the canonical isomorphisms by $\tau:1_i \otimes (\oplus_{j\in I} 1_j)\to \oplus_{j\in I} (1_i \otimes 1_j)$ and $\sigma: \oplus_{j\in I} (1_j \otimes 1_i) \to (\oplus_{j\in I} 1_j) \otimes 1_i$, and the canonical projections by $p^r_{j}: \oplus_{j\in I} (1_i \otimes 1_{j}) \to 1_i\otimes 1_j$ and $p^l_{j}: \oplus_{j\in I} (1_{j} \otimes 1_i) \to 1_j\otimes 1_i$ for any $j$ in $I$. Since  $1_i \otimes 1_j =0$ for $i \not= j$, we have $p^r_{i}:\oplus_{j\in I} (1_i \otimes 1_{j}) \to 1_i\otimes 1_i$ and $p^l_{i}:\oplus_{j\in I} (1_{j} \otimes 1_i) \to 1_i\otimes 1_i$ are isomorphisms. Indeed, one can observe that $p_j^l$ and $p_j^r$ are always zero for $j \not= i$. Furthermore, because $p^r_{i} \tau = \id_{1_i}\otimes p_i$ and $p^l_{i} = (p_i \otimes \id_{1_i})\sigma$, we obtain that $\id_{1_i} \otimes p_i = p^r_{i} \tau = p^l_{i} (p^l_{i})^{-1} p^r_{i} \tau = (p_j\otimes \id_{1_i})\sigma (p^l_{i})^{-1} p^r_{i} \tau$.

For $i \not= j$, since $1_i \otimes 1_j =0$, we have
$i_i \otimes i_j = 0 = i_j \otimes i_i$. Hence, 
$$
\begin{aligned}
i_i\otimes \id_1 &= i_i \otimes \sum_{j\in I} (i_j p_j) = i_i \otimes i_i p_i = (i_i \otimes i_i)(\id_{1_i}\otimes p_i)\\ 
&= (i_i \otimes i_i) (p_i\otimes \id_{1_i})\sigma (p^l_{i})^{-1} p^r_{i} \tau = (i_i p_i \otimes i_i)\sigma (p^l_{i})^{-1} p^r_{i} \tau\\ 
&= (\sum_{j\in I} (i_j p_j)\otimes i_i) \sigma (p^l_{i})^{-1} p^r_{i} \tau = (\id_1 \otimes i_i)\sigma (p^l_{i})^{-1} p^r_{i} \tau.
\end{aligned}
$$ 
Thus, by the naturality of unit constraints, we have $i_i r_{1_i} = r_1 (i_i \otimes \id_1) = l_1 (\id_1 \otimes i_i)\sigma (p^l_{i})^{-1} p^r_{i} \tau = i_i l_{1_i}\sigma (p^l_{i})^{-1} p^r_{i} \tau$, which implies $r_{1_i} = l_{1_i}\sigma (p^l_{i})^{-1} p^r_{i} \tau$. 

We claim that $1_i$ is an algebra in $\cc$ with the multiplication and the unit defined by $m_{1_i}:= r_{1_i} \tau^{-1} (p^r_{i})^{-1} = l_{1_i} \sigma (p^l_{i})^{-1}: 1_i \otimes 1_i \to 1_i$ and $u_{1_i}:= p_i: 1 \to 1_i$. First, we want to check that $m_{1_i} (\id_{1_i} \otimes m_{1_i}) = m_{1_i} (m_{1_i} \otimes \id_{1_i})$. Note that $m_{1_i}$ is an isomorphism, it suffices to show $\id_{1_i} \otimes m_{1_i} = m_{1_i} \otimes \id_{1_i}$. Because we are under the strict assumption, it is clear that $\id_{1_i}\otimes l_{1_i} = r_{1_i} \otimes \id_{1_i}$. Since $\tau^{-1} (p^r_{i})^{-1} = (\id_{1_i} \otimes p_i)^{-1} = \id_{1_i} \otimes i_i$ and $\sigma (p^l_{i})^{-1} = (p_i \otimes \id_{1_i})^{-1} = i_i \otimes \id_{1_i}$, we have 
$$
\begin{aligned}
\id_{1_i} \otimes m_{1_i} &= \id_{1_i} \otimes l_{1_i} \sigma (p^l_{i})^{-1} = (\id_{1_i} \otimes l_{1_i}) (\id_{1_i} \otimes i_i \otimes \id_{1_i})\\ 
&= (r_{1_i} \otimes \id_{1_i})(\tau^{-1} (p^r_{i})^{-1} \otimes \id_{1_i}) = r_{1_i} \tau^{-1} (p^r_{i})^{-1} \otimes \id_{1_i} = m_{1_i} \otimes \id_{1_i}.
\end{aligned} 
$$
Moreover, $m_{1_i} (\id_{1_i} \otimes u_{1_i}) = r_{1_i} \tau^{-1} (p^r_{i})^{-1} (\id_{1_i} \otimes p_i) = r_{1_i} \tau^{-1} \tau = r_{1_i}$, and similarly $m_{1_i} (u_{1_i} \otimes \id_{1_i}) = l_{1_i} \sigma (p^l_{i})^{-1} (p_i \otimes \id_{1_i}) = l_{1_i} \sigma \sigma^{-1} = l_{1_i}$. Thus, $(1_i, m_{1_i}, u_{1_i})$ is an algebra in $\cc$.

Similarly, we claim that $1_i$ is a coalgebra in $\cc$ with the comultiplication and the counit defined by $\Delta_{1_i}:= p_i^r \tau r_{1_i}^{-1} = p_i^l \sigma^{-1} l_{1_i}^{-1}: 1_i \to 1_i \otimes 1_i$ and $\varepsilon_{1_i}:= i_i: 1_i \to 1$. In fact, $\Delta_{1_i} = m_{1_i}^{-1}$, so it is automatically that $(\Delta_{1_i} \otimes \id_{1_i})\Delta_{1_i} = (\id_{1_i} \otimes \Delta_{1_i})\Delta_{1_i}$. In addition, $(\id_{1_i} \otimes \varepsilon_{1_i}) \Delta_{1_i} = (\id_{1_i} \otimes i_i) p_i^r \tau r_{1_i}^{-1} = \tau^{-1} (p_i^r)^{-1} p_i^r \tau r_{1_i}^{-1} = r_{1_i}^{-1}$ and $(\varepsilon_{1_i} \otimes \id_{1_i})\Delta_{1_i} = (i_i \otimes \id_{1_i}) p_i^l \sigma^{-1} l_{1_i}^{-1} = \sigma (p_i^l)^{-1} p_i^l \sigma^{-1} l_{1_i}^{-1} = l_{1_i}^{-1}$. Thus, it is indeed a coalgebra in $\cc$.
\end{proof}

\begin{lemma}\label{multiring}
Let $\cc$ be a multiring category with left duals. If either of the following conditions is satisfied, 
\begin{enumerate}[$(1)$]
\item for any non-zero algebra $A$ in $\cc$, the tensor functor $-\otimes A:\cc \to \cc_A$ is separable,

\item for any non-zero coalgebra $C$ in $\cc$, the tensor functor $-\otimes C:\cc\to \cc^C$ is separable, 
\end{enumerate}
then the unit object of $\cc$ is simple.
\end{lemma}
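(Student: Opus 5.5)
We use the decomposition $1=\oplus_{i\in I}1_i$ into pairwise non-isomorphic simple objects, available by \cite[Theorem 4.3.8 (ii)]{Eti15} for a multiring category with left duals. It suffices to show that $I$ has exactly one element; then $1=1_i$ is simple. We argue by contradiction: assume $|I|\ge 2$ and fix some $i\in I$.

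For condition $(1)$, Lemma \ref{algebra} makes $1_i$ an algebra with unit $u_{1_i}=p_i:1\to 1_i$, and $1_i\neq 0$ since it is simple. The hypothesis says $-\otimes 1_i:\cc\to\cc_{1_i}$ is separable, so Proposition \ref{induction functor} shows that $u_{1_i}=p_i$ is split-mono. Choose $s:1_i\to 1$ with $s\,p_i=\id_1$. Then $p_i$ is a monomorphism, which is impossible: for $j\neq i$ the injection $i_j:1_j\to 1$ is a non-zero morphism with $p_i i_j=0$, hence $i_j=s\,p_i\,i_j=0$, a contradiction.

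The same argument can be phrased through lengths instead. Since the unit object $1$ is semisimple of finite length $|I|$, $1$ being a retract of the simple object $1_i$ forces $|I|\le 1$. One could also invoke Lemma \ref{lemma iso}, using that $1$ has finite length in a locally finite category. However, Lemma \ref{lemma iso} assumes $\cc$ is semisimple, which is not given here, so the direct argument above is the one I would use.

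For condition $(2)$, Lemma \ref{algebra} makes $1_i$ a coalgebra with counit $\varepsilon_{1_i}=i_i:1_i\to 1$. By Proposition \ref{coinduction functor}, separability of $-\otimes 1_i$ means $i_i$ is split-epi. So there is $t:1\to 1_i$ with $i_i t=\id_1$, and $i_i$ is an epimorphism. This contradicts $p_j i_i=0$ with $p_j\neq 0$ for any $j\neq i$.

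The only point needing care is the matching of conventions in Propositions \ref{induction functor} and \ref{coinduction functor}: separability corresponds to the unit being split-mono, respectively the counit being split-epi. The structure maps produced in Lemma \ref{algebra} are exactly the projection $p_i$ and the injection $i_i$, so no further computation is needed. The case $I=\{i\}$ is exactly the conclusion, since then $1=1_i$ is simple.
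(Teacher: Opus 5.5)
Your proof is correct and follows the paper's route. Lemma \ref{algebra} makes $1_i$ an algebra with unit $p_i$ (resp.\ a coalgebra with counit $i_i$), and Proposition \ref{induction functor} (resp.\ Proposition \ref{coinduction functor}) turns separability into $p_i$ being split-mono (resp.\ $i_i$ being split-epi); the only cosmetic difference is the ending, where the paper notes that $p_i$ is then both split-mono and split-epi, hence $1\cong 1_i$, while you reach a contradiction from $p_i i_j=0$ (resp.\ $p_j i_i=0$) for $j\neq i$.
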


\begin{proof}
$(1)$ Suppose $1$ has a non-zero simple subobject $1_i$. By Lemma \ref{algebra}, $1_i$ is an algebra in $\cc$. By the given assumption and Proposition \ref{induction functor}, $-\otimes 1_i$ is separable, and hence $u_{1_i}$ is split-mono. Note that $u_{1_i} = p_i: 1 \to 1_i$ is a split epimorphism. This implies $1$ is isomorphic to $1_i$. $(2)$ It follows in a similar manner to $(1)$.    
\end{proof}

Now, we can consider the main theorem in this paper. Recall that a functor $F:\cc \to \dd$ is called a Maschke functor if it reflects split-monomorphisms, i.e. for every morphism $i$ in $\cc$ such that $Fi$ is split-mono, then $i$ is split-mono. Similarly, $F$ is a dual Maschke functor if it reflects split-epimorphisms. A functor is called conservative if it reflects isomorphisms. 

\begin{proposition}\label{semisimple multiring main result}
Let $\cc$ be a semisimple multiring category with left duals. Then, the following are equivalent:  

\begin{enumerate}[$(1)$]
\item the unit object is simple, i.e. $\cc$ is a ring category.

\item for any non-zero algebra $A$ in $\cc$, the functor $-\otimes A:\cc \to \cc_A$ is separable.

\item for any non-zero coalgebra $C$ in $\cc$, the functor $-\otimes C:\cc\to \cc^C$ is separable. 

\item for any non-zero algebra $A$ in $\cc$, the functor $-\otimes A:\cc \to \cc_A$ is faithful.

\item for any non-zero coalgebra $C$ in $\cc$, the functor $-\otimes C:\cc\to \cc^C$ is faithful.

\item for any non-zero algebra $A$ in $\cc$, the functor $-\otimes A:\cc \to \cc_A$ is Maschke.

\item for any non-zero coalgebra $C$ in $\cc$, the functor $-\otimes C:\cc\to \cc^C$ is Maschke.

\item for any non-zero algebra $A$ in $\cc$, the functor $-\otimes A:\cc \to \cc_A$ is dual Maschke.

\item for any non-zero coalgebra $C$ in $\cc$, the functor $-\otimes C:\cc\to \cc^C$ is dual Maschke.

\item for any non-zero algebra $A$ in $\cc$, the functor $-\otimes A:\cc \to \cc_A$ is conservative.

\item for any non-zero coalgebra $C$ in $\cc$, the functor $-\otimes C:\cc\to \cc^C$ is conservative.

\item for any non-zero algebra $A$ in $\cc$, its unit $u_A:1 \to A$ is a monomorphism.

\item for any non-zero coalgebra $C$ in $\cc$, its counit $\varepsilon_C:C \to 1$ is an epimorphism.

\item any non-zero algebra morphism $f:1 \to A$ in $\cc$ is a monomorphism.

\item any non-zero coalgebra morphism $f:C \to 1$ in $\cc$ is an epimorphism.
\end{enumerate}
\end{proposition}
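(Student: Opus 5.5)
The plan is to organize the fifteen conditions around (1), using three tools: Proposition \ref{semisimple semiseparable}, the characterization of separability in \cite[Corollary 1.9]{AB22} and \cite[Proposition 1.3]{AB22}, and the argument of Lemma \ref{multiring}.

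\emph{The functor conditions (2)--(11).} The implications $(1)\Rightarrow(2)$ and $(1)\Rightarrow(3)$ are Proposition \ref{semisimple induction separable}. By Proposition \ref{semisimple semiseparable}, for every algebra $A$ and every coalgebra $C$ the functors $-\otimes A$ and $-\otimes C$ are semiseparable, since $\cc$ is semisimple with biexact tensor product. For a semiseparable functor, \cite[Proposition 1.3]{AB22} and \cite[Corollary 1.9]{AB22} say that being separable, faithful, Maschke, dual Maschke and conservative are all equivalent. Hence (2), (4), (6), (8), (10) are mutually equivalent, and likewise (3), (5), (7), (9), (11). Finally, $(2)\Rightarrow(1)$ and $(3)\Rightarrow(1)$ are Lemma \ref{multiring}. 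This closes the cycle for (1)--(11).

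\emph{Conditions (12) and (13).} For $(1)\Rightarrow(12)$: if $1$ is simple and $A\neq0$, then $u_A\neq 0$, because $u_A=0$ would force $\id_A=m_A(\id_A\otimes u_A)=0$. A nonzero morphism out of a simple object is a monomorphism, since its kernel is a proper subobject and hence $0$. For $(12)\Rightarrow(2)$: in a semisimple category every monomorphism splits, so $u_A$ is split-mono and Proposition \ref{induction functor} gives separability. The same argument works dually for (13): $\varepsilon_C\neq0$ by counitality, a nonzero morphism into a simple object is epi, epimorphisms split, and Proposition \ref{coinduction functor} applies.

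\emph{Conditions (14) and (15).} For $(1)\Rightarrow(14)$: a nonzero morphism out of the simple object $1$ is mono, as above. For $(14)\Rightarrow(1)$: suppose $1=\oplus_{i\in I}1_i$ with $|I|\ge2$. By Lemma \ref{algebra}, $1_i$ is an algebra with unit $u_{1_i}=p_i$. The key step, which I expect to be the main obstacle, is to check that $p_i\colon 1\to 1_i$ is an algebra morphism, where $1$ carries its trivial algebra structure. Unitality is immediate, since $u_{1_i}=p_i=p_i\circ\id_1$. Multiplicativity amounts to $m_{1_i}(p_i\otimes p_i)=p_i$ as maps $1\otimes1=1\to 1_i$. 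I would verify this as follows:
\begin{itemize}
\item from the definition, $m_{1_i}(\id_{1_i}\otimes p_i)=r_{1_i}$, which is the identity under strictness;
\item hence $m_{1_i}(p_i\otimes p_i)=r_{1_i}(p_i\otimes\id_1)=p_i$ by naturality of $r$.
\end{itemize}
So $p_i$ is a nonzero algebra morphism that is not mono, because $p_j\neq0$ for $j\neq i$ and $p_i i_j=0$. This contradicts (14). Dually, $i_i\colon 1_i\to 1$ is a nonzero coalgebra morphism with $\varepsilon_{1_i}=i_i$, and it is not epi; this gives $(15)\Rightarrow(1)$. The implication $(1)\Rightarrow(15)$ holds since a nonzero morphism into a simple object is epi.
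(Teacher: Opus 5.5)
Your proposal is correct and follows the paper's approach: (1)--(11) are handled identically. For (12)--(15) you go $(12)\Rightarrow(2)$ via splitting of monomorphisms and exhibit $p_i$, $i_i$ directly as non-mono/non-epi (co)algebra morphisms, whereas the paper goes $(12),(13)\Rightarrow(1)$ via Lemma~\ref{algebra} and $(14)\Rightarrow(12)$, $(15)\Rightarrow(13)$, but these are the same ideas. Your anticipated ``main obstacle'' is just the general fact that the unit $u_A\colon 1\to A$ of any algebra is an algebra morphism (here $u_{1_i}=p_i$), which is why the paper calls $(14)\Rightarrow(12)$ obvious.
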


\begin{proof}
$(1) \Rightarrow (2)$ and $(1) \Rightarrow (3)$ follow from Proposition \ref{semisimple induction separable}. $(2) \Rightarrow (1)$ and $(3) \Rightarrow (1)$ follow from Lemma \ref{multiring}. Hence, $(1) \Leftrightarrow (2) \Leftrightarrow (3)$

By Proposition \ref{semisimple semiseparable}, the tensor functors $-\otimes A$ and $- \otimes C$ are always semiseparable in a multifusion category. Recall that a functor is separable if and only if it is semiseparable and faithful, see \cite[Proposition 1.3]{AB22}. Thus, $(2) \Leftrightarrow (4)$ and $(3) \Leftrightarrow (5)$. 

Besides, by \cite[Corollary 1.9]{AB22}, a functor is separable if and only if it is semiseparable and Maschke, if and only if it is semiseparable and dual Maschke, if and only if it is semiseparable and conservative. Hence, $(2) \Leftrightarrow (6) \Leftrightarrow (8) \Leftrightarrow (10)$ and $(3) \Leftrightarrow (7) \Leftrightarrow (9) \Leftrightarrow (11)$.

$(1)\Rightarrow (12)$ Because $\ker(u_A)$ is a subobject of $1$, $1$ being simple and $u_A$ being non-zero mean that $\ker(u_A) = 0$. One can show $(1)\Rightarrow (13), (14), (15)$ similarly. $(12)\Rightarrow (1)$ and $(13) \Rightarrow (1)$ follow from Lemma \ref{algebra}, since $u_{1_i}: 1 \to 1_i$ being a monomorphism or $\varepsilon_{1_i}: 1_i \to 1$ being an epimorphism would imply $1 = 1_i$. $(14) \Rightarrow (12)$ and $(15) \Rightarrow (13)$ are obvious. Hence, $(1) \Leftrightarrow (12) \Leftrightarrow (13) \Leftrightarrow (14) \Leftrightarrow (15)$.
\end{proof}

By applying Proposition \ref{semisimple multiring main result} to multifusion category, we obtain the equivalent definitions of fusion category.

\begin{proposition}\label{main result}
Let $\cc$ be a multifusion category. Then, the following are equivalent:  

\begin{enumerate}[$(1)$]
\item the unit object is simple, i.e. $\cc$ is a fusion category.

\item for any non-zero algebra $A$ in $\cc$, the functor $-\otimes A:\cc \to \cc_A$ is separable.

\item for any non-zero coalgebra $C$ in $\cc$, the functor $-\otimes C:\cc\to \cc^C$ is separable. 

\item for any non-zero algebra $A$ in $\cc$, the functor $-\otimes A:\cc \to \cc_A$ is faithful.

\item for any non-zero coalgebra $C$ in $\cc$, the functor $-\otimes C:\cc\to \cc^C$ is faithful.

\item for any non-zero algebra $A$ in $\cc$, the functor $-\otimes A:\cc \to \cc_A$ is Maschke.

\item for any non-zero coalgebra $C$ in $\cc$, the functor $-\otimes C:\cc\to \cc^C$ is Maschke.

\item for any non-zero algebra $A$ in $\cc$, the functor $-\otimes A:\cc \to \cc_A$ is dual Maschke.

\item for any non-zero coalgebra $C$ in $\cc$, the functor $-\otimes C:\cc\to \cc^C$ is dual Maschke.

\item for any non-zero algebra $A$ in $\cc$, the functor $-\otimes A:\cc \to \cc_A$ is conservative.

\item for any non-zero coalgebra $C$ in $\cc$, the functor $-\otimes C:\cc\to \cc^C$ is conservative.

\item for any non-zero algebra $A$ in $\cc$, its unit $u_A:1 \to A$ is a monomorphism.

\item for any non-zero coalgebra $C$ in $\cc$, its counit $\varepsilon_C:C \to 1$ is an epimorphism.

\item any non-zero algebra morphism $f:1 \to A$ in $\cc$ is a monomorphism.

\item any non-zero coalgebra morphism $f:C \to 1$ in $\cc$ is an epimorphism.
\end{enumerate}
\end{proposition}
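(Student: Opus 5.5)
The plan is to reduce the statement directly to Proposition \ref{semisimple multiring main result}. For that, I only need to check that a multifusion category $\cc$ satisfies its hypotheses: $\cc$ should be a semisimple multiring category with left duals.

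First, by definition a multifusion category is a finite semisimple multitensor category. So $\cc$ is a locally finite $\Bbbk$-linear abelian rigid monoidal category with bilinear tensor product, and in particular it is semisimple. Rigidity gives left (and right) duals for every object. By \cite[Proposition 4.2.1]{Eti15}, recalled after the definitions, the tensor product of a multitensor category is biexact. Hence $\cc$ is a multiring category, and it has left duals and is semisimple.

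Consequently, Proposition \ref{semisimple multiring main result} applies verbatim to $\cc$, and gives the equivalence of the fifteen assertions $(1)$–$(15)$. The only extra point is the reformulation in $(1)$: a multifusion category whose unit object is simple is exactly a fusion category. Indeed, if $1$ is simple, Schur's lemma over the algebraically closed field $\Bbbk$ (using that hom spaces are finite dimensional by local finiteness) gives $\mathrm{End}_{\cc}(1)=\Bbbk$. Conversely, if $\mathrm{End}_{\cc}(1)=\Bbbk$, then the decomposition $1=\oplus_{i\in I}1_i$ into pairwise non-isomorphic simples has $|I|=\dim\mathrm{End}(1)=1$, so $1$ is simple. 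Indecomposability is then automatic from the trivial component decomposition $\cc=\oplus_{i,j\in I}\cc_{ij}$ with $|I|=1$ (\cite[Remark 4.3.4, Theorem 4.3.8]{Eti15}).

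There is no real obstacle here. The only step needing care is this identification of ``unit simple'' with the definition of fusion category, including the indecomposability clause in the definition of tensor category. I would handle it by citing the component decomposition as above.
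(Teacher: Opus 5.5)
Your proposal is correct and follows the paper's route: the paper obtains this proposition simply by applying Proposition \ref{semisimple multiring main result}, since a multifusion category is a semisimple multiring category with left duals (biexactness coming from \cite[Proposition 4.2.1]{Eti15}). Your extra check that simplicity of $1$ is equivalent to $\mathrm{End}_{\cc}(1)=\Bbbk$, which in turn forces indecomposability, is a welcome detail. The paper leaves that point implicit in the phrase ``i.e.\ $\cc$ is a fusion category''.
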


\begin{remark}
Let $\cc$ be a semisimple multiring category with left duals. Recall that there is an associated idempotent natural transformation $e:\id_{\cc} \to \id_{\cc}$ for any semiseparable functor $F: \cc \to \dd$, see \cite[Definition 1.5]{AB22}. By Proposition \ref{semisimple semiseparable}, the tensor functor $-\otimes A: \cc \to \cc_A$, for an algebra $A$ in $\cc$, is semiseparable. As observed in \cite[page 730]{BT15}, the induction functor $u_A^*=-\otimes _1 A: \cc_1\to \cc_A$ is just the tensor functor $-\otimes A:\cc\to\cc_A$. Hence, by \cite[Remark 3.6]{BZ26}, the associated idempotent natural transformation of $-\otimes A:\cc\to\cc_A$ is given by 
$$
e_M=\nu_M\circ\eta_M=\nu^1_M (\id_M\otimes \psi' \varphi'u_A) = r_M^1 (\id_M\otimes \psi' \varphi' \varphi \psi) = \id_M\otimes \psi' \psi,
$$ 
where $u_A = \varphi \psi$ is the image factorization and $\psi', \varphi'$ are morphisms such that $\psi \circ \psi' = \id_{\im (f)} = \varphi' \circ \varphi$ as in the proof of Lemma \ref{semisimple regular}. By \cite[Corollary 1.7]{AB22}, $-\otimes A:\cc\to\cc_A$ is separable if and only if $e = \id$. Note that $e = \id$ if and only if $\psi' \psi = \id_1$, if and only if $\psi:1 \to \im(u_A)$ is an isomorphism. Now, suppose the unit of $\cc$ is not simple. Since $1 = \oplus_{i\in I} 1_i$, we know that $1_i$ is not isomorphic to $1$. By Lemma \ref{algebra}, $1_i$ is an algebra in $\cc$ with unit $u_{1_i} = p_i:1 \to 1_i$. Because $\im(u_{1_i})$ is a subobject of $1_i$, $\psi: 1\to \im(u_{1_i})$ is not an isomorphism. As a result, $-\otimes 1_i$ is not separable. This observation supports Proposition \ref{semisimple multiring main result}.
\end{remark}

\begin{remark}
One may ask what happens if for any non-zero algebra $A$ in $\cc$, the functor $-\otimes A:\cc \to \cc_A$ is naturally full. By Proposition \ref{induction functor}, this is equivalent to say $u_A$ is split-epi for any non-zero algebra $A$ in $\cc$. This is impossible if $\cc$ is a non-zero abelian monoidal category. In fact, $u_{1 \oplus 1} = \left( \begin{smallmatrix} \id_1 \\ \id_1 \end{smallmatrix} \right):1 \to 1\oplus 1$ is not an epimorphism, since $\left( \begin{smallmatrix} \id_1 & -\id_1 \end{smallmatrix}\right) \left( \begin{smallmatrix} \id_1 \\ \id_1 \end{smallmatrix} \right)$ while $\left( \begin{smallmatrix} \id_1 & -\id_1 \end{smallmatrix}\right) \not=0$. Besides, if $\cc$ is a multiring category with left duals, $u_A: 1 \to A$ being epimorphism means that $A$ is isomorphic to $\oplus_{i \in J} 1_i$ for some subset $J$ of $I$.  
\end{remark}

\begin{remark}\label{forgetful functor separable}
Denote the forgetful functor by $F:\cc_A \to \cc$. We are curious about the relation between the separability of $-\otimes A:\cc \to \cc_A$ and that of $F \circ (-\otimes A):\cc \to \cc$. By \cite[Proposition 46]{CMZ02}, if $F \circ (-\otimes A)$ is separable, then $-\otimes A:\cc \to \cc_A$ is separable. Conversely, if both $F$ and $-\otimes A$ are separable, then $F \circ (-\otimes A)$ is separable. Thus, the question becomes: under what conditions is $F$ separable? An answer is provided by \cite[Proposition 4.3]{BT15}: Let $\cc$ be a monoidal category such that $1$ is a left $\otimes$-generator for $\cc$. If $A$ is an algebra in $\cc$ then $A$ is separable if and only if the forgetful functor $F:\cc_A \to \cc$ is a separable functor.
\end{remark}

\section{Algebras in multiring category}\label{Algebras in multiring category}

Let $\cc$ be a multiring category with left duals. From Lemma \ref{multiring}, we know that the simplicity of the unit object in $\cc$ is determined by the property of algebras in $\cc$, while the Proposition \ref{semisimple multiring main result} implies this property is also determined by the simplicity of the unit if $\cc$ is semisimple. This motivates us to further explore the structure of algebras in $\cc$. In this section, we will consider the relation between algebras in $\cc$ and algebras in the component subcategory $\cc_{ii}$. 

Because $\cc = \oplus_{i,j \in I} \cc_{ij}$, for the sake of brevity, we write $X = \oplus_{i,j \in I} X_{ij}$ for any object $X$ where $X_{ij} = 1_i \otimes X \otimes 1_j$ is in $\cc_{ij}$. Denote the canonical projection and injection by $p_{X_{ij}}:= p_i \otimes \id_X \otimes p_j:X \to X_{ij}$ and $i_{X_{ij}}:= i_i \otimes \id_X \otimes i_j: X_{ij} \to X$ respectively. 

\begin{lemma}\label{morphism between component categories}
Let $\cc$ be a multiring category with left duals. Then, there is only the zero morphism  from an object in $\cc_{ij}$ to an object in $\cc_{i'j'}$ for $i\not= i'$ or $j \not= j'$.
\end{lemma}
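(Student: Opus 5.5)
We prove the statement by inserting the idempotent decomposition of the unit on each side of a morphism and letting the orthogonality $1_i\otimes 1_{i'}=0$ for $i\neq i'$ kill it.

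Let $X\in\cc_{ij}$, $Y\in\cc_{i'j'}$ and $f:X\to Y$. We use two facts. First, $X\cong 1_i\otimes X\otimes 1_j$ and $Y\cong 1_{i'}\otimes Y\otimes 1_{j'}$. Second, $1_k\otimes 1_l=0$ for $k\neq l$ (\cite[Exercise 4.3.3]{Eti15}). Assume without loss of generality that $i\neq i'$; the case $j\neq j'$ is symmetric, working on the right instead of the left.

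By definition of $\cc_{ij}$ there are objects with $X=1_i\otimes X'\otimes 1_j$ and $Y=1_{i'}\otimes Y'\otimes 1_{j'}$. Write $\id_1=\sum_{k\in I} i_kp_k$. Tensoring on the left gives
\[
f=\id_1\otimes f=\sum_{k}(i_k\otimes \id_Y)(\id_{1_k}\otimes f)(p_k\otimes \id_X),
\]
where we use the strict identification $1\otimes X=X$ and the interchange law.

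Each summand passes through $1_k\otimes X=1_k\otimes 1_i\otimes X'\otimes 1_j$, which is zero unless $k=i$. It also passes through $1_k\otimes Y=1_k\otimes 1_{i'}\otimes Y'\otimes 1_{j'}$, which is zero unless $k=i'$. Here we use that tensoring with a zero object gives zero, since $\otimes$ is additive in each variable. Because $i\neq i'$, every summand factors through a zero object, so every summand vanishes and $f=0$.

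The only point requiring care is to justify that $\id_1\otimes f$ decomposes as the displayed sum. This follows from bilinearity of $\otimes$ on morphisms, namely $(\sum_k i_kp_k)\otimes f=\sum_k (i_kp_k)\otimes f$, together with functoriality of $\otimes$, which gives $(i_kp_k)\otimes f=(i_k\otimes\id_Y)(\id_{1_k}\otimes f)(p_k\otimes\id_X)$. Since the identification $\cc_{ij}=1_i\otimes\cc\otimes 1_j$ is given up to isomorphism, one may equally transport $f$ along the isomorphisms $X\cong 1_i\otimes X\otimes 1_j$ and argue the same way. Left duals are not needed beyond the existence of the decomposition $1=\oplus_k 1_k$ with $1_k\otimes 1_l=0$ for $k\neq l$ recalled in Section \ref{Multifusion category}.
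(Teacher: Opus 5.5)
Your proof is correct and is essentially the paper's argument. Both write $f$ as a sum of its components along the decomposition $1=\oplus_k 1_k$, using bilinearity of $\otimes$, and observe that each component factors through a zero object because $1_k\otimes 1_l=0$ for $k\neq l$. The only difference is cosmetic: you decompose only on the side where the indices differ, whereas the paper decomposes on both sides at once via $\tau$ and shows $1_s\otimes f\otimes 1_t=0$ for all $s,t$.
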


\begin{proof}
The case of $\cc_{ij}$ or $\cc_{i'j'}$ being zero is trivial. We only need to consider the case that they are both non-zero. Let $X_{ij}$, $X_{i'j'}$ be non-zero objects in $\cc_{ij}$, $\cc_{i'j'}$, respectively. Assume there is a morphism $f:X_{ij} \to X_{i'j'}$, then $1_s \otimes f \otimes 1_t = 0$ for all $s$, $t$ in $I$. 
Since there is an isomorphism $\tau: (\oplus_{s \in I} 1_s) \otimes X_{ij} \otimes (\oplus_{t \in I} 1_t) \to \oplus_{s,t \in I} (1_s \otimes X_{ij} \otimes 1_t)$, we can write $f = f \tau^{-1} \tau = \sum_{s,t \in I} f \tau^{-1} i_{st} p_{st} \tau$, where $i_{st}: 1_s \otimes X_{ij} \otimes 1_t \to \oplus_{s,t \in I} (1_s \otimes X_{ij} \otimes 1_t)$ and $p_{st}:\oplus_{s,t \in I} (1_s \otimes X_{ij} \otimes 1_t) \to 1_s \otimes X_{ij} \otimes 1_t$ are canonical injection and projection, respectively. Because $\tau^{-1} i_{st} = i_s \otimes \id_{X_{ij}} \otimes i_t$, we have $f \tau^{-1} i_{st} = (1 \otimes f \otimes 1) (i_s \otimes \id_{X_{ij}} \otimes i_t) = (i_s \otimes \id_{X_{ij}} \otimes i_t)(1_s \otimes f \otimes 1_t) = 0$. Thus, $f = \sum_{s,t \in I} f \tau^{-1} i_{st} p_{st} \tau = 0$.   
\end{proof}

According to Lemma \ref{morphism between component categories}, any morphism $f:X \to Y$ can be written as $f = \oplus_{i,j \in I} f_{ij}$, where $f_{ij} = \id_{1_i} \otimes f \otimes \id_{1_j}:X_{ij} \to Y_{ij}$.

\begin{invisible}

\begin{lemma}
Let $\cc$ be an abelian monoidal category with left duals. For objects $X_1$, $X_2$ in $\cc$, the left dual of $X_1 \oplus X_2$ is given by $(X_1 \oplus X_2)^*: = X_1^* \oplus X_2^*$.
\end{lemma}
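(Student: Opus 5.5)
We equip $X_1^*\oplus X_2^*$ with explicit evaluation and coevaluation morphisms and check the two zig-zag identities. Write $\iota_k:X_k\to X_1\oplus X_2$ and $\pi_k:X_1\oplus X_2\to X_k$ for the canonical injections and projections, so that $\pi_k\iota_l=\delta_{kl}\id_{X_k}$ and $\iota_1\pi_1+\iota_2\pi_2=\id_{X_1\oplus X_2}$. Write $\iota'_k,\pi'_k$ for the analogous maps of $X_1^*\oplus X_2^*$. Let $(X_k^*,\ev_{X_k},\coev_{X_k})$ be the left duals, with $\ev_{X_k}:X_k^*\otimes X_k\to 1$ and $\coev_{X_k}:1\to X_k\otimes X_k^*$. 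Define
$$\ev:=\sum_{k=1,2}\ev_{X_k}\circ(\pi'_k\otimes\pi_k):(X_1^*\oplus X_2^*)\otimes(X_1\oplus X_2)\to 1,$$
$$\coev:=\sum_{k=1,2}(\iota_k\otimes\iota'_k)\circ\coev_{X_k}:1\to(X_1\oplus X_2)\otimes(X_1^*\oplus X_2^*).$$
These sums make sense because the category is additive and the tensor product is additive in each variable, so $\otimes$ is bilinear on morphisms.

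We then verify the first zig-zag identity $(\id_{X_1\oplus X_2}\otimes\ev)(\coev\otimes\id_{X_1\oplus X_2})=\id_{X_1\oplus X_2}$. Expanding by bilinearity gives a double sum over $k,l$ of terms
$$(\iota_k\otimes\ev_{X_l})(\id_{X_k}\otimes\pi'_l\iota'_k\otimes\pi_l)(\coev_{X_k}\otimes\id).$$
Since $\pi'_l\iota'_k=0$ for $k\neq l$, only the diagonal terms survive. By the interchange law, the $k$-th diagonal term equals
$$\iota_k\circ(\id_{X_k}\otimes\ev_{X_k})(\coev_{X_k}\otimes\id_{X_k})\circ\pi_k=\iota_k\pi_k.$$
Summing over $k$ gives $\iota_1\pi_1+\iota_2\pi_2=\id$. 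The second identity, $(\ev\otimes\id_{X_1^*\oplus X_2^*})(\id_{X_1^*\oplus X_2^*}\otimes\coev)=\id_{X_1^*\oplus X_2^*}$, follows by the symmetric computation, using $\pi_l\iota_k=\delta_{kl}\id_{X_k}$ and the zig-zag identity for each $X_k^*$.

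The only point needing care is the interchange step: sliding $\iota_k$ out to the left and $\pi_k$ out to the right. This uses functoriality of $\otimes$ together with strictness, which lets us suppress associators. The rest is a routine bilinear expansion. Uniqueness of duals up to unique isomorphism then justifies writing $(X_1\oplus X_2)^*=X_1^*\oplus X_2^*$.
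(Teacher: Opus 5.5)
Your proposal is correct and is essentially the paper's proof. The paper uses the same evaluation $\sum_k \ev_{X_k}(\pi'_k\otimes\pi_k)$ and coevaluation $\sum_k(\iota_k\otimes\iota'_k)\coev_{X_k}$, drops the cross terms, and reduces each diagonal term to $\iota_k\pi_k$ through the zig-zag identity for $X_k$, leaving the second identity to a symmetric computation just as you do.
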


\begin{proof}
Denote the evaluation and coevaluation of $X_1$ and $X_2$ by $\ev_1$, $\ev_2$, $\coev_1$, $\coev_2$, respectively. Define morphisms by
$$
\ev_{X_1 \oplus X_2}:= \ev_1 (p_{X_1^*} \otimes p_{X_1}) + \ev_2 (p_{X_2^*} \otimes p_{X_2}): (X_1^* \oplus X_2^*) \otimes (X_1 \oplus X_2) \to 1, 
$$
and
$$
\coev_{X_1 \oplus X_2}:= (i_{X_1} \otimes i_{X_1^*}) \coev_1 + (i_{X_2} \otimes i_{X_2^*}) \coev_2: 1 \to (X_1 \oplus X_2) \otimes (X_1^* \oplus X_2^*).
$$
Since 
\begin{align*}
&(\id_{X_1\oplus X_2} \otimes \ev_{X_1\oplus X_2}) (\coev_{X_1\oplus X_2} \otimes \id_{X_1\oplus X_2})\\
= &(\id_{X_1\oplus X_2} \otimes (\ev_1 (p_{X_1^*} \otimes p_{X_1}) + \ev_2 (p_{X_2^*} \otimes p_{X_2})))\\ 
&((i_{X_1} \otimes i_{X_1^*}) \coev_1 + (i_{X_2} \otimes i_{X_2^*}) \coev_2 \otimes \id_{X_1\oplus X_2})\\
= &(\id_{X_1\oplus X_2} \otimes \ev_1) (i_{X_1} \otimes \id_{X_1^*} \otimes p_{X_1})(\coev_1 \otimes \id_{X_1\oplus X_2})\\
&+(\id_{X_1\oplus X_2} \otimes \ev_2) (i_{X_2} \otimes \id_{X_2^*} \otimes p_{X_2})(\coev_2 \otimes \id_{X_1\oplus X_2})\\
= &i_{X_1}(\id_{X_1} \otimes \ev_1)(\coev_1 \otimes \id_{X_1})p_{X_1} + i_{X_2}(\id_{X_2} \otimes \ev_2)(\coev_2 \otimes \id_{X_2})p_{X_2}\\
= &i_{X_1}p_{X_1} + i_{X_2}p_{X_2} = \id_{X_1\oplus X_2}, 
\end{align*}
and similarly
$$
(\ev_{X_1\oplus X_2} \otimes \id_{X_1^* \oplus X_2^*})(\id_{X_1^* \oplus X_2^*} \otimes \coev_{X_1 \oplus X_2}) = \id_{X_1^* \oplus X_2^*},
$$
we obtain that $X_1^* \oplus X_2^*$ is a left dual of $X_1 \oplus X_2$.
\end{proof}
\end{invisible}

Consider the component subcategory decomposition $\cc = \oplus_{i,j \in I} \cc_{ij}$. For any non-empty subset $J$ of $I$, we denote by $\cc_J :=\oplus_{i,j \in J} \cc_{ij}$ the full subcategory of $\cc$ and $1_J := \oplus_{i\in J} 1_i$. We also denote by $p_J: 1 \to \oplus_{j \in J} 1_j$ and $i_J: \oplus_{j \in J} 1_j \to 1$ the canonical projection and inclusion.

\begin{proposition}\label{CJ of multiring}
Let $\cc$ be a multiring category with left duals. For any non-empty subset $J$ of $I$, the subcategory $\cc_J$ of $\cc$ is also a multiring category with left duals.
\end{proposition}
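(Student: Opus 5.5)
The plan is to realize $\cc_J$ as the ``corner'' $1_J\otimes\cc\otimes 1_J$ of $\cc$ and to check the axioms of a multiring category with left duals one at a time.

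The first step is to identify the monoidal structure. Since $1_i\otimes 1_j=0$ for $i\neq j$ and each $1_i\otimes 1_i\cong 1_i$ (via the algebra structure of Lemma \ref{algebra}), we get $1_J\otimes 1_J\cong 1_J$. More generally, for $X\in\cc_{ij}$ and $Y\in\cc_{kl}$ the object $X\otimes Y=X\otimes 1_j\otimes 1_k\otimes Y$ vanishes unless $j=k$, and it lies in $\cc_{il}$. Hence $\cc_J$ is closed under $\otimes$. We take $1_J$ as its unit: for $X\in\cc_J$ one has $X\cong 1_J\otimes X\otimes 1_J$, so $1_J\otimes X\cong X\cong X\otimes 1_J$. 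The constraints come from $l,r$ of $\cc$ composed with $p_J,i_J$, exactly as in the construction of $m_{1_i}$ in Lemma \ref{algebra}. There the key identity was $\id\otimes p_i$ being inverse to $\id\otimes i_i$; here the analogue is that $\id_X\otimes p_J$ and $\id_X\otimes i_J$ are mutually inverse for $X\in\cc_J$. Associativity is inherited from $\cc$, and the triangle axiom follows from the same computation. If one prefers strictness, one may instead note that $\cc_J$ is monoidally equivalent to a strict category; I would not belabour this.

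The second step covers the linear, abelian and local finiteness properties. By Lemma \ref{morphism between component categories}, $\cc=\oplus\cc_{ij}$ is a direct sum of abelian categories. Therefore $\cc_J$ is a full subcategory closed under direct sums, subobjects and quotients: if $Y\subseteq X\in\cc_J$, then the components $Y_{ij}$ with $(i,j)\notin J\times J$ are subobjects of $X_{ij}=0$. Consequently $\cc_J$ is abelian with kernels and cokernels computed in $\cc$, and it is $\Bbbk$-linear and locally finite, since Hom spaces and lengths are unchanged. Bilinearity and biexactness of $\otimes$ restrict directly from $\cc$, because exact sequences in $\cc_J$ are exact in $\cc$.

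The third step, left duals, is the one requiring an actual argument. For $X\in\cc_{ij}$ with left dual $(X^*,\ev,\coev)$ in $\cc$, I would show that $X^*\in\cc_{ji}$. The natural approach is to replace $X^*$ by $1_j\otimes X^*\otimes 1_i$, with evaluation $\ev$ restricted along $i_j\otimes\id\otimes i_i\otimes\id_X$ and then composed with $p_J$, and with coevaluation $p_J$-corrected analogously. The zig-zag identities should then follow because the discarded components $1_s\otimes X^*\otimes 1_t$, for $(s,t)\neq(j,i)$, pair trivially with $X$ by $1_t\otimes 1_i=0$. Alternatively, one can argue by uniqueness of duals: $\coev$ lands in $X\otimes X^*=X\otimes 1_j\otimes X^*$, and one shows that the idempotent $1_j\otimes -\otimes 1_i$ acting on $X^*$ is an isomorphism. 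For general $X\in\cc_J$, the dual is the direct sum of the duals of its components, using the standard construction of the dual of a direct sum. In this form the dual lies in $\cc_J$, and its evaluation and coevaluation land in $1_J$ after composing with $p_J$ and $i_J$.

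I expect the bookkeeping in this last verification, that the zig-zag identities hold with unit $1_J$ rather than $1$, to be the main obstacle. I would handle it by reducing everything to the identity $(\id_X\otimes i_Jp_J)=\id_X$ for $X\in\cc_J$, which holds because $i_Jp_J=\sum_{j\in J}i_jp_j$ and $X\otimes 1_k=0$ for $k\notin J$.
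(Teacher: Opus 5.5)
Your proposal is correct and follows essentially the paper's route: closure of $\cc_J$ under direct sums, kernels and cokernels (the paper argues via Lemma \ref{morphism between component categories}, you via exactness of $1_i\otimes-\otimes 1_j$), inherited linearity, local finiteness and biexactness, unit $1_J$ with constraints built from $i_J$, and left duals computed componentwise as $X^*\cong\oplus X_{ij}^*$ with $X_{ij}^*\in\cc_{ji}$. The only difference is that the paper cites \cite[Remark 4.3.4 (4)]{Eti15} for $X_{ij}^*\in\cc_{ji}$ and leaves the $p_J,i_J$-correction of $\ev,\coev$ implicit, whereas you check both by hand. Your check works because $\id_X\otimes i_jp_j=\id_X$ and $i_ip_i\otimes\id_X=\id_X$ for $X\in\cc_{ij}$, so the idempotent $i_jp_j\otimes\id_{X^*}\otimes i_ip_i$ acts as the identity inside $X\otimes X^*\otimes X$. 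One correction: the components $1_s\otimes X^*\otimes 1_i$ with $s\neq j$ are eliminated by $X\otimes 1_s=0$, not by $1_t\otimes 1_i=0$ as you wrote.
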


\begin{proof}
It is clear that $\cc_J$ is an additive category with zero object. For two objects $X = \oplus_{i,j \in J} X_{ij}$, $Y= \oplus_{i,j \in J} Y_{ij}$ in $\cc_J$, we know that $X_{ij} \oplus Y_{ij} = (1_i \otimes X \otimes 1_j) \oplus (1_i \otimes Y \otimes 1_j) \cong 1_i \otimes (X \oplus Y) \otimes 1_j$ is an object in $\cc_{ij}$. Therefore, $X \oplus Y = \oplus_{i,j \in J} (X_{ij} \oplus Y_{ij})$ is in $\cc_J$. Let $f:X\to Y$ be a morphism in $\cc_J$. Suppose $\ker(f) = \oplus_{i,j \in I} \ker(f)_{ij}$ is not in $\cc_J$. Then, there exist $s$, $t$ in $I$ but at least one of them is not in $J$ such that $\ker(f)_{st} \not= 0$. Thus, the composition of canonical inclusions $\ker(f)_{st} \to \ker(f) \to X$ is a monomorphism. This is a contradiction of Lemma \ref{morphism between component categories}. Hence, $\ker(f)$ is in $\cc_J$. Similarly, $\coker(f)$ and $\im(f)$ are also in $\cc_J$. Therefore, $\cc_J$ is an abelian category. Local finiteness and $\Bbbk$-linearity of $\cc_J$ are inherited from $\cc$ immediately. 

$\cc_J$ is a monoidal category where the tensor product is inherited from $\cc$ and the unit is $1_J$. The unit constraints are given by $l^J_X = i_J \otimes \id_X$ and $r^J_X = \id_X \otimes i_J$ for any object $X$ in $\cc$. The linearity and exactness of tensor product in $\cc_J$ are also inherited from $\cc$. Besides, for an object $X = \oplus_{i,j \in J} X_{ij}$ in $\cc_J$, one can check that $X^* = \oplus_{i,j \in J} X_{ij}^*$. By \cite[Remark 4.3.4 (4)]{Eti15}, each $X_{ij}^*$ is in $\cc_J$. Hence, $X^*$ is in $\cc_J$. 
\end{proof}

\begin{corollary}
Let $\cc$ be a multitensor category. For any non-empty subset $J$ of $I$, the subcategory $\cc_J$ of $\cc$ is also a multitensor category.    
\end{corollary}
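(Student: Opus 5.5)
The plan is to deduce the corollary from Proposition \ref{CJ of multiring}, together with the remark after the definition of multitensor category that a multitensor category is always a multiring category. If $\cc$ is a multitensor category, then its tensor product is biexact by \cite[Proposition 4.2.1]{Eti15}. Hence $\cc$ is a multiring category, and in particular it has left duals. Proposition \ref{CJ of multiring} then shows that $\cc_J$ is a locally finite $\Bbbk$-linear abelian monoidal category. Its unit is $1_J$, its tensor product is bilinear and biexact, and it has left duals. So the only property of a multitensor category still to check is rigidity of $\cc_J$, that is, the existence of right duals as well.

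For right duals I would mirror the last step of the proof of Proposition \ref{CJ of multiring}. Take an object $X=\oplus_{i,j\in J}X_{ij}$ of $\cc_J$. Its right dual ${}^*X$ exists in $\cc$, and it decomposes as $\oplus_{i,j\in J}{}^*X_{ij}$, since duality is additive. By \cite[Remark 4.3.4]{Eti15}, duality sends $\cc_{ij}$ to $\cc_{ji}$ for both left and right duals, so each ${}^*X_{ij}$ lies in $\cc_{ji}\subseteq\cc_J$. Therefore ${}^*X$ is an object of $\cc_J$.

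It remains to see that the evaluation and coevaluation of $X$, which are morphisms in $\cc$ with source or target $1$, give a duality in $\cc_J$, whose unit is $1_J$. For this I would compose them with $p_J$ and $i_J$. Lemma \ref{morphism between component categories} shows that any morphism between $1$ and an object of $\cc_J$ factors through the summand $1_J$, because the components $1_i$ with $i\notin J$ admit only zero maps to and from $\cc_J$. Using $i_Jp_J$ in place of $\id_1$ on these maps, the zigzag identities in $\cc$ turn into the zigzag identities in $\cc_J$, with the unit constraints $l^J$ and $r^J$ from Proposition \ref{CJ of multiring}.

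I expect this transport of the evaluation and coevaluation maps from $1$ to $1_J$ to be the only real obstacle, and it is bookkeeping rather than anything conceptual. The same argument also settles the left duals, which the proof of Proposition \ref{CJ of multiring} asserted without carrying out this check.
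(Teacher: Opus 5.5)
Your proposal is correct and follows the paper's route. Like the paper, you reduce to Proposition \ref{CJ of multiring} and then show, as for left duals, that right duals ${}^*X\cong\oplus_{i,j\in J}{}^*X_{ij}$ of objects of $\cc_J$ stay in $\cc_J$, since duality sends $\cc_{ij}$ to $\cc_{ji}$. Your extra check that $p_J\circ\ev_X$ and $\coev_X\circ i_J$ satisfy the zigzag identities relative to the unit $1_J$ is valid bookkeeping that the paper leaves implicit.
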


\begin{proof}
Similar to Proposition \ref{CJ of multiring}, one can show that the right dual of objects in $\cc_J$ is also in $\cc_J$. 
\end{proof}

\begin{corollary}\label{CJ of multifusion}
Let $\cc$ be a multifusion category. For any non-empty subset $J$ of $I$, the subcategory $\cc_J$ of $\cc$ is also a multifusion category. 
\end{corollary}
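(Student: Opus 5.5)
The plan is to reduce to the preceding Corollary on multitensor categories and then check the two extra conditions that distinguish multifusion from multitensor: finiteness and semisimplicity. Since a multifusion category is a finite semisimple multitensor category, and the preceding Corollary already gives that $\cc_J$ is a multitensor category (abelian, locally finite, $\Bbbk$-linear, rigid, with bilinear tensor product and unit $1_J$), it remains to show that $\cc_J$ is semisimple and finite.

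For semisimplicity, I would take an object $X$ of $\cc_J$ and decompose it in $\cc$ as $X\cong \oplus_k S_k$ with $S_k$ simple in $\cc$. Each simple $S_k$ lies in a single component $\cc_{st}$. Indeed, $S_k=\oplus_{s,t}(S_k)_{st}$ and simplicity forces exactly one summand to be non-zero. Each $S_k$ is a direct summand, hence a subobject, of $X$. If $(s,t)\notin J\times J$, then $S_k\to X$ would be a non-zero monomorphism from $\cc_{st}$ into $X=\oplus_{i,j\in J}X_{ij}$, which contradicts Lemma \ref{morphism between component categories}; this is the same argument as in Proposition \ref{CJ of multiring}. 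So every $S_k$ lies in $\cc_J$. It is also simple in $\cc_J$, because $\cc_J$ is a full abelian subcategory closed under kernels, so its subobjects in $\cc_J$ are subobjects in $\cc$. Hence $X$ is semisimple in $\cc_J$.

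For finiteness, I would use that in the semisimple setting a finite category is one with finitely many isomorphism classes of simple objects and finite-dimensional Hom spaces. The Hom spaces of $\cc_J$ are those of $\cc$, so they are finite dimensional. The simple objects of $\cc_J$ are exactly the simple objects of $\cc$ lying in $\cc_J$, so there are finitely many of them. If one prefers the general definition of a finite abelian category (enough projectives and finite length), then finite length is inherited from $\cc$, and every object of $\cc_J$ is projective by semisimplicity.

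The main obstacle is the bookkeeping showing that simple objects of $\cc$ occurring in an object of $\cc_J$ lie in $\cc_J$, and remain simple there. This is handled by Lemma \ref{morphism between component categories} as above, so I expect the proof to be short.
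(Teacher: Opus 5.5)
Your proposal is correct and follows essentially the same route as the paper: you take the multitensor structure from the preceding corollary, then show that the simple summands in $\cc$ of an object of $\cc_J$ lie in $\cc_J$ and stay simple there, which gives both semisimplicity and finitely many isomorphism classes of simples. The differences are cosmetic. The paper gets enough projectives from the fact that projective covers in $\cc$ of simples of $\cc_J$ lie in $\cc_J$, whereas you use that every object is projective by semisimplicity; you also spell out, via Lemma~\ref{morphism between component categories}, the containment of simple summands in $\cc_J$ that the paper only asserts.
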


\begin{proof}
Note that an object in $\cc_J$ is a simple object if and only if it is a simple object in $\cc$. Thus, $\cc_J$ has finitely many isomorphism classes of simple objects, since $\cc$ does. Besides, for any simple object $S$ in $\cc_J$, by Lemma \ref{morphism between component categories}, the projective cover $P(S)$ of $S$ in $\cc$ is in $\cc_J$. Indeed, $P(S)$ is also the projective cover of $S$ is $\cc_J$. Hence, $\cc_J$ has enough projectives.

For any object $X$ in $\cc_J$, we know that $X$ is a direct sum of some simple objects in $\cc$. Because $X$ is in $\cc_J$, these simple objects are also in $\cc_J$. Hence, $X$ is a direct sum of some simple objects in $\cc_J$. This means $\cc_J$ is semisimple. As a result, $\cc_J$ is a multifusion category. 
\end{proof}

Given monoidal categories $(\cc,\otimes,1)$ and $(\dd,\otimes,1')$, recall that  a functor $F:\cc\to\dd$ is said to be lax monoidal, see e.g. \cite[Definition 3.1]{AM10}, if there is a natural transformation $\phi^2$, given on components by $F(X)\otimes F(Y)\to F(X\otimes Y)$, and a morphism $\phi^0:1'\to F(1)$ such that
\begin{align*}
   \phi^2_{X, Y\otimes Z}(\id\otimes \phi^2_{Y,Z}) &=\phi^2_{X\otimes Y, Z}(\phi^2_{X,Y}\otimes \id),\\
l_{F(X)}=F(l_X)\phi^2_{1,X}(\phi^0\otimes \id),&\quad r_{F(X)}= F(r_X)\phi^2_{X,1}(\id\otimes \phi^0),
\end{align*}
where $l, r$ are the left and right unit constraints of the understood categories. Dually, one can define the colax monoidal functor, see e.g. \cite[Definition 3.2]{AM10}. Besides, a functor $F:\cc \to \dd$ is said to be a separable Frobenius monoidal functor if it is equipped with a lax monoidal structure $(\phi, \phi_0)$ and a colax monoidal structure $(\psi,\psi_0)$ such that, for every $X,Y,Z\in\cc$, the diagrams
\begin{align*}	
\xymatrixcolsep{1cm}\xymatrixrowsep{0.5cm}\xymatrix{
F(X\otimes Y)\otimes F(Z)\ar[r]^-{\psi_{X,Y}\otimes\id}\ar[d]_-{\phi_{X\otimes Y, Z}} & F(X)\otimes F(Y)\otimes F(Z)\ar[d]^-{\id\otimes \phi_{Y,Z}}\\
F(X\otimes Y\otimes Z)\ar[r]_-{\psi_{X,Y\otimes Z}}& F(X)\otimes F(Y\otimes Z)\\
F(X)\otimes F(Y\otimes Z)\ar[r]^-{\id\otimes\psi_{Y,Z}}\ar[d]_-{\phi_{X,Y\otimes Z}}&F(X)\otimes F(Y)\otimes F(Z)\ar[d]^-{\phi_{X,Y}\otimes\id}\\
F(X\otimes Y\otimes Z)\ar[r]_-{\psi_{X\otimes Y, Z}}&F(X\otimes Y)\otimes F(Z)
}
\end{align*}
are commutative, and $\phi_{X,Y}\psi_{X,Y}=\id_{F(X\otimes Y)}$. We refer readers to \cite[Definition 6.1]{Sz05} for the definition of separable Frobenius monoidal functor. In this paper, we use the notation from \cite[Definition 6.1]{BT15}.

Define the inclusion functor $L_J:\cc_J \to \cc$ by $L_J(X) = X$ and $L_J(f) = f$ for any object $X$ and morphism $f$ in $\cc_J$. We claim that the inclusion functor $L_J:\cc_J \to \cc$ for each non-empty subset $J$ of $I$ is a separable Frobenius monoidal functor.

\begin{proposition}\label{inclusion functor separable Frobenius}
Let $\cc$ be a multiring category with left duals. The inclusion functor $L_J:\cc_J \to \cc$ is a separable Frobenius monoidal functor.
\end{proposition}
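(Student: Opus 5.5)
The tensor product of $\cc_J$ is the restriction of that of $\cc$. So I take the identity as both the lax and the colax structure on pairs: $\phi_{X,Y} = \psi_{X,Y} = \id_{X\otimes Y}$ for $X,Y$ in $\cc_J$. The only non-trivial data are the unit morphisms. Here the unit of $\cc_J$ is $1_J$ and that of $\cc$ is $1$, so I set
$$
\phi^0 := p_J : 1 \to 1_J = L_J(1_J), \qquad \psi^0 := i_J : 1_J \to 1 .
$$
With identities on pairs, the associativity condition for $\phi$ and its colax analogue for $\psi$ hold trivially, since the monoidal categories are strict. The two Frobenius diagrams have all arrows equal to identities, so they commute. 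The separability condition $\phi_{X,Y}\psi_{X,Y} = \id$ is immediate.

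The real work is in the unit axioms, which I check next. Strictness of $\cc$ means $l_X$ and $r_X$ are identities in $\cc$. The unit constraints of $\cc_J$ are $l^J_X = i_J\otimes \id_X : 1_J\otimes X \to X$ and $r^J_X = \id_X \otimes i_J$, as in Proposition \ref{CJ of multiring}. The lax left unit axiom then reads
$$
\id_X = L_J(l^J_X)\,\phi_{1_J,X}\,(\phi^0\otimes \id_X) = (i_J\otimes \id_X)(p_J\otimes \id_X) = (i_Jp_J)\otimes \id_X .
$$
The right unit axiom is the same with the factors swapped. The colax versions read $(p_J\otimes\id_X)(l^J_X)^{-1} = \id$, i.e. $(l^J_X)^{-1} = p_J \otimes \id_X$. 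So everything reduces to a single claim: for $X$ in $\cc_J$,
$$
(i_Jp_J)\otimes \id_X = \id_X = \id_X\otimes (i_Jp_J).
$$
Given this claim, $l^J_X = i_J\otimes\id_X$ is invertible with inverse $p_J\otimes\id_X$, because $p_Ji_J = \id_{1_J}$ already. This also confirms that the constraints listed in Proposition \ref{CJ of multiring} make sense.

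To prove the claim, I write $i_Jp_J = \sum_{j\in J} i_jp_j$ and $\id_1 = \sum_{j\in I} i_jp_j$. It then suffices to show $(i_kp_k)\otimes \id_X = 0$ for $k\notin J$, and similarly on the right. For this I decompose $X = \oplus_{s,t\in J} X_{st}$ with $X_{st} = 1_s\otimes X\otimes 1_t$. Then $(i_kp_k)\otimes \id_{X_{st}}$ factors through $1_k\otimes 1_s\otimes X\otimes 1_t$, which is $0$ since $1_k\otimes 1_s = 0$ for $k\neq s$ (\cite[Exercise 4.3.3]{Eti15}). Additivity of $\otimes$ in the second variable then gives the vanishing on all of $X$.

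I expect the main obstacle to be the bookkeeping needed to identify $X$ with $\oplus_{s,t} X_{st}$ compatibly with tensoring on the left by morphisms $1\to 1$. This is the same kind of canonical-isomorphism manipulation as in Lemma \ref{algebra} and Lemma \ref{morphism between component categories}. I would handle it by naturality of $\otimes$: the isomorphism $X\cong \oplus X_{st}$ is built from $i_s\otimes\id_X\otimes i_t$, and the map $(i_kp_k)\otimes\id$ commutes with these.
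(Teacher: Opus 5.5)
Your proposal is correct and is essentially the paper's proof. It uses the same structure maps (identities on pairs, $\phi_0=p_J$, $\psi_0=i_J$) and the same reduction to $\id_X\otimes p_J$ and $p_J\otimes\id_X$ being inverse to $\id_X\otimes i_J$ and $i_J\otimes\id_X$, which rests on $X\otimes 1_k=0=1_k\otimes X$ for $k\notin J$. The paper phrases this through the canonical isomorphism $X\otimes 1\cong\oplus_{j\in I}(X\otimes 1_j)$ and the projection onto the $J$-summands, while you use the orthogonal idempotents $i_kp_k$.

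One small slip: with $\psi_0=i_J$, the colax unit axiom is just $l^J_X=i_J\otimes\id_X$ (resp.\ $r^J_X=\id_X\otimes i_J$), which holds by definition. Your displayed colax condition involving $p_J$ is misstated (the composite does not typecheck), but it is not needed.
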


\begin{proof}
For any object $X$, $Y$ in $\cc_J$, define $\phi_{X,Y}:L_J(X) \otimes L_J(Y) \to L_J(X\otimes Y)$ and $\psi_{X,Y}:L_J(X\otimes Y) \to L_J(X) \otimes L_J(Y)$ by identities. Define $\phi_0:= p_J: 1 \to 1_J$ and $\psi_0:= i_J: 1_J \to 1$. Since the left and right unit isomorphisms in $\cc_J$ are given by $l^J_X = l_X (i_J \otimes \id_X)$, $r^J_X = r_X(\id_X \otimes i_J)$ respectively, it suffices to show $l^J_X (p_J \otimes \id_X) = l_X$ and $r^J_X (\id_X \otimes p_J) = r_X$.   

Note that, for any object $X$ in $\cc_J$, there is a canonical isomorphism $\alpha: X\otimes (\oplus_{j\in I} 1_j) \to \oplus_{j\in I} (X \otimes 1_j)$. Since $X$ is in $\cc_J$, we know that $X \otimes 1_j = 0$ for any $j \notin J$. It follows that the canonical projection $p_{X}^J:\oplus_{j\in I} (X \otimes 1_j) \to \oplus_{j\in J} (X \otimes 1_j)$ is an isomorphism. Because $\id_X \otimes p_J = \beta p_{X}^J \alpha$, where $\beta:\oplus_{j\in J} (X \otimes 1_j) \to X \otimes 1_J$ is the canonical isomorphism, we know that $\id_X \otimes p_J$ is an isomorphism and its inverse is $\id_X \otimes i_J$. Therefore, $r^J_X (\id_X \otimes p_J) = r_X(\id_X \otimes i_J) (\id_X \otimes p_J) = r_X$. Similarly, one can show $l^J_X (p_J \otimes \id_X) = l_X$.
\end{proof}

Let $\cc$, $\dd$ be monoidal categories. It is well-known that a lax monoidal functor $(F:\cc\to\dd,\phi,\phi_0)$ maps an algebra $(A, m_A, u_A)$ to an algebra $(F(A), F(m_A)\circ \phi_{A,A}, F(u_A)\circ \phi_0)$, and a colax monoidal functor $(F:\cc\to\dd,\psi,\psi_0)$ maps a coalgebra $(C, \Delta_C, \varepsilon_C)$ to a coalgebra $(F(C), \psi_{C,C} \circ F(\Delta_C),\psi_0 \circ F(\varepsilon_C))$, see e.g. \cite[Proposition 3.29]{AM10}. Therefore, Proposition \ref{inclusion functor separable Frobenius} gives rise to the following result.

\begin{proposition}\label{algebra and coalgebra in component subcateogry}
Let $\cc$ be a multiring category with left duals. 

\begin{enumerate}
    \item Let $(A, m_A, u_A)$ be an algebra in $\cc_J$, where $u_A: 1_J \to A$. Then, $(A, m_A, u_Ap_J)$ is an algebra in $\cc$.
    \item Let $(C, \Delta_C, \varepsilon_C)$ be a coalgebra in $\cc_J$, where $\varepsilon_C: C \to 1_J$. Then, $(C, \Delta_C, i_J \varepsilon_C)$ is a coalgebra in $\cc$.   
\end{enumerate}
\end{proposition}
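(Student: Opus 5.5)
The statement follows by applying the inclusion functor $L_J:\cc_J\to\cc$ of Proposition \ref{inclusion functor separable Frobenius} and using the standard fact recalled just before it: lax monoidal functors send algebras to algebras, and colax monoidal functors send coalgebras to coalgebras. The key step is to read off the transported structure and check that it matches the data in the statement.

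For $(1)$, I use the lax monoidal structure $(\phi,\phi_0)$ of $L_J$, with $\phi_{X,Y}=\id_{X\otimes Y}$ and $\phi_0=p_J:1\to 1_J$. By \cite[Proposition 3.29]{AM10}, $L_J(A)=A$ is an algebra in $\cc$ with multiplication $L_J(m_A)\circ\phi_{A,A}=m_A$ and unit $L_J(u_A)\circ\phi_0=u_Ap_J$. This is exactly $(A,m_A,u_Ap_J)$.

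For $(2)$, I use the colax structure $(\psi,\psi_0)$, with $\psi_{X,Y}=\id$ and $\psi_0=i_J:1_J\to 1$. The transported coalgebra has comultiplication $\psi_{C,C}\circ\Delta_C=\Delta_C$ and counit $\psi_0\circ\varepsilon_C=i_J\varepsilon_C$.

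The only delicate point is the unit axiom, which involves the unit constraints of both $\cc_J$ and $\cc$. In the proof of Proposition \ref{inclusion functor separable Frobenius} this reduces to the identities
\[
l^J_X(p_J\otimes\id_X)=l_X,\qquad r^J_X(\id_X\otimes p_J)=r_X,
\]
and these already show that $(L_J,\phi,\phi_0)$ satisfies the lax monoidal axioms. The associativity axiom is trivial because $\phi$ is the identity. Dually, the counit axiom for the colax structure uses that $\id_X\otimes i_J$ and $i_J\otimes\id_X$ are the inverses of $\id_X\otimes p_J$ and $p_J\otimes\id_X$, which was established in that same proof.

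A direct check is also available. Since $A\in\cc_J$, $\id_A\otimes p_J$ is invertible with inverse $\id_A\otimes i_J$. Then
\[
m_A(\id_A\otimes u_Ap_J)=r^J_A(\id_A\otimes p_J)=r_A,
\]
using $r^J_A=r_A(\id_A\otimes i_J)$, and the left unit and counit cases are symmetric. I do not expect a genuine obstacle: the proof is just careful bookkeeping of the two sets of unit constraints.
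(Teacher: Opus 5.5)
Your proposal is correct and follows the paper's route: the paper derives this statement directly from the lax and colax monoidal structures $(\phi,\phi_0)=(\id,p_J)$ and $(\psi,\psi_0)=(\id,i_J)$ on the inclusion $L_J$ (Proposition \ref{inclusion functor separable Frobenius}) together with \cite[Proposition 3.29]{AM10}, reading off $m_A$, $u_Ap_J$, $\Delta_C$ and $i_J\varepsilon_C$ exactly as you do. Your extra direct check of the unit axiom via $r^J_A(\id_A\otimes p_J)=r_A$ is valid but not needed.
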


\begin{remark}
Suppose the unit object in $\cc$ is not simple. Let $A$ be an algebra in $\cc_J$ where $J$ is a proper subset of $I$. We observe that $-\otimes A: \cc \to \cc_A$ is not faithful. In fact, for any morphism $f$ in $\cc_{j,j}$ where $j$ in $I$ but not in $J$, we have $f\otimes A = 0$. This observation supports Proposition \ref{semisimple multiring main result}.
\end{remark}

For any object $X$ and any morphism $f$ in $\cc$, we denote by $X_J:= 1_J \otimes X \otimes 1_J$ and $f_J := 1_J \otimes f \otimes 1_J$. Define the projection functor $R_J: \cc \to \cc_J$ by $R_J(X) = X_J$ and $R_J(f) = f_J$. We claim that the projection functor $R_J$ is a lax and colax monoidal functor.

\begin{proposition}\label{projection functor lax}
Let $\cc$ be a multiring category with left duals. The projection functor $R_J: \cc \to \cc_J$ is a lax and colax monoidal functor.
\end{proposition}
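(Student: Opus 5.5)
We must construct the structure maps explicitly and check the axioms; the plan mirrors Proposition \ref{inclusion functor separable Frobenius}.

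\textbf{Structure maps.} For $X,Y$ in $\cc$ we have
$$R_J(X)\otimes R_J(Y)=1_J\otimes X\otimes 1_J\otimes 1_J\otimes Y\otimes 1_J, \qquad R_J(X\otimes Y)=1_J\otimes X\otimes Y\otimes 1_J.$$
The natural candidates are therefore
$$\phi_{X,Y}:=\id_{1_J}\otimes \id_X\otimes (r_{1_J}\circ(\id_{1_J}\otimes i_J))\otimes \id_Y\otimes\id_{1_J}$$
and
$$\psi_{X,Y}:=\id_{1_J}\otimes \id_X\otimes (i_J\otimes i_J)\circ\Delta\otimes \id_Y\otimes \id_{1_J}.$$
Here we use the middle map $1\to 1_J\otimes 1_J$ obtained by projecting $1$ to $1_J$ and then applying the inverse of the multiplication $1_J\otimes 1_J\to 1_J$. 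More concretely, I would take
$$\phi_{X,Y}=\id_{1_J}\otimes\id_X\otimes m\otimes\id_Y\otimes \id_{1_J}$$
composed with $\id\otimes i_J\otimes\id$. Here $m:1_J\otimes 1_J\to 1_J$ is the algebra structure, defined exactly as in Lemma \ref{algebra} but with $J$ in place of $\{i\}$; its inverse is $\id_{1_J}\otimes i_J$. Dually,
$$\psi_{X,Y}=\id_{1_J}\otimes\id_X\otimes\bigl((\id_{1_J}\otimes p_J)\circ \Delta_{1_J}\circ p_J\bigr)\otimes\id_Y\otimes\id_{1_J}.$$
Equivalently, insert $p_J:1\to 1_J$ and then use $m^{-1}$. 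For the units, take $\phi_0:=\id_{1_J}: 1_J\to 1_J\otimes 1\otimes 1_J\cong 1_J$ and $\psi_0$ its inverse. To be precise, $R_J(1)=1_J\otimes 1\otimes 1_J$ is canonically isomorphic to $1_J$ via $\id\otimes p_J\otimes\id$, using $1_i\otimes 1_j=0$ for $i\neq j$.

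\textbf{Key steps.} First, I would establish the analogue of Lemma \ref{algebra} for $1_J$: $1_J$ is an algebra and a coalgebra, and $\id_{1_J}\otimes p_J$ and $p_J\otimes\id_{1_J}$ are isomorphisms with inverses $\id\otimes i_J$ and $i_J\otimes\id$. This follows as in the proof of Proposition \ref{inclusion functor separable Frobenius} from $1_i\otimes 1_j=0$ for $i\ne j$. Second, naturality of $\phi,\psi$ is immediate, since they only act on the middle tensor factor while $R_J(f)$ acts on the $X$ and $Y$ factors. Third, the associativity (resp. coassociativity) axiom reduces to the identity $\id_{1_J}\otimes m = m\otimes \id_{1_J}$ on $1_J^{\otimes 3}$, already proved in Lemma \ref{algebra}; in strict form it is just associativity of $1_J$ placed at two different positions. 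Fourth, the unit axioms reduce to $m(\id\otimes p_J)=r$ and $m(p_J\otimes\id)=l$ restricted to $1_J$, which again is the computation of Lemma \ref{algebra}.

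\textbf{Main obstacle.} The main obstacle is bookkeeping with the strictness convention. Since $\cc$ is strict, $l$ and $r$ are identities, so maps like $\id_{1_J}\otimes i_J\colon 1_J\to 1_J\otimes 1_J$ (i.e.\ $1_J\otimes 1\to 1_J\otimes 1_J$) must be tracked carefully, identifying $1_J\otimes 1 = 1_J$. I would handle this by writing everything in terms of the mutually inverse isomorphisms $\id_{1_J}\otimes p_J$ and $\id_{1_J}\otimes i_J$. With that convention, one checks directly that $\phi\circ\psi=\id$, because $p_Ji_J=\id$. Consequently $R_J$ is in fact both lax and colax, which is what is claimed.
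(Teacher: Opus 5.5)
Once your typing slips are repaired, this is the paper's proof. In the strict setting $i_J\circ(\id_{1_J}\otimes i_J)=i_J\otimes i_J$ and $(\id_{1_J}\otimes p_J)\circ p_J=p_J\otimes p_J$. So your $\phi_{X,Y}$ and $\psi_{X,Y}$ are exactly the paper's $\id_{1_J\otimes X}\otimes i_J\otimes i_J\otimes\id_{Y\otimes 1_J}$ and $\id_{1_J\otimes X}\otimes p_J\otimes p_J\otimes\id_{Y\otimes 1_J}$. Likewise $\phi_0,\psi_0$ are the mutually inverse isomorphisms $\id_{1_J}\otimes p_J$ and $\id_{1_J}\otimes i_J$ between $1_J$ and $R_J(1)=1_J\otimes 1_J$.

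The unit axioms then need only two facts: that $\id_{1_J}\otimes p_J$ is invertible with inverse $\id_{1_J}\otimes i_J$, and that $i_J\otimes\id_{1_J}=\id_{1_J}\otimes i_J$. You obtain the second by rerunning the computation of Lemma~\ref{algebra} with $J$ in place of $\{i\}$, which works since $1_J\otimes 1_{I\setminus J}=0$. The paper instead reads it off from $l_1=r_1$ in the monoidal category $\cc_J$, whose unitors are $i_J\otimes\id$ and $\id\otimes i_J$. That is a shorter route to the same identity.

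As written, several of your formulas do not typecheck:
\begin{itemize}
\item your first $\phi_{X,Y}$ lands in $1_J\otimes X\otimes 1_J\otimes Y\otimes 1_J$, not in $R_J(X\otimes Y)$;
\item $(i_J\otimes i_J)\circ\Delta$ goes $1_J\to 1$, the wrong direction;
\item $(\id_{1_J}\otimes p_J)\circ\Delta_{1_J}\circ p_J$ does not compose;
\item $\id_{1_J}\otimes i_J$ is $m$ itself, not $m^{-1}$.
\end{itemize}
Associativity also needs no identity on $1_J^{\otimes 3}$. The two instances of $\phi$ act on disjoint tensor positions, so it is just the interchange law.

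Your closing claim $\phi\circ\psi=\id$ is false. What $p_Ji_J=\id_{1_J}$ gives is $\psi_{X,Y}\circ\phi_{X,Y}=\id$. The other composite is $\phi_{X,Y}\circ\psi_{X,Y}=\id_{1_J\otimes X}\otimes i_Jp_J\otimes\id_{Y\otimes 1_J}$, which is not the identity in general. For a counterexample, take $0\neq X\in\cc_{jk}$ with $j\in J$, $k\notin J$, and $Y=X^*\in\cc_{kj}$. Then $R_J(X)\otimes R_J(Y)=0$, while $R_J(X\otimes X^*)\cong X\otimes X^*\neq 0$; otherwise $\id_X$ would factor through $X\otimes X^*\otimes X=0$. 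So the condition $\phi\psi=\id$ that holds for $L_J$ in Proposition~\ref{inclusion functor separable Frobenius} fails for $R_J$. This does not affect the proposition, since the lax and colax structures are verified independently. But your ``consequently'' is a non sequitur, and the claim should be dropped or corrected to $\psi\circ\phi=\id$.
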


\begin{proof}
For any object $X$, $Y$ in $\cc$, we define $\phi_{X,Y}:R_J(X) \otimes R_J(Y) \to R_J(X\otimes Y)$ by 
$$
\phi_{X,Y} := \id_{1_J \otimes X} \otimes i_J \otimes i_J \otimes \id_{Y\otimes 1_J},
$$
and $\psi_{X,Y}:R_J(X\otimes Y) \to R_J(X) \otimes R_J(Y)$ by 
$$
\psi_{X,Y}:= \id_{1_J \otimes X} \otimes p_J \otimes p_J \otimes \id_{Y\otimes 1_J}.
$$
Since $R_J(1) = 1_J \otimes 1 \otimes 1_J$, we also define $\phi_0: 1_J \to R(1)$ and $\psi_0:R(1) \to 1_J$ by $\phi_0 = \id_{1_J}\otimes \id_1 \otimes p_J$ and $\psi_0= \id_{1_J}\otimes \id_1 \otimes i_J$ i.e. the canonical isomorphisms. Therefore,
\begin{align*}
&\phi_{X\otimes Y, Z} (\phi_{X,Y} \otimes \id_{R_J(Z)})\\ 
= &(\id_{1_J \otimes X \otimes Y} \otimes i_J \otimes i_J \otimes \id_{Z\otimes 1_J})(\id_{1_J \otimes X} \otimes i_J \otimes i_J \otimes \id_{Y\otimes 1_J} \otimes \id_{R_J(Z)})\\
= &\id_{1_J \otimes X} \otimes i_J \otimes i_J \otimes \id_Y \otimes i_J \otimes i_J \otimes \id_{Z\otimes 1_J}\\
= &(\id_{1_J \otimes X} \otimes i_J \otimes i_J \otimes \id_{Y \otimes Z\otimes 1_J})(\id_{1_J \otimes X \otimes 1_J} \otimes \id_{1_J \otimes Y} \otimes i_J \otimes i_J \otimes \id_{Z\otimes 1_J})\\
= &\phi_{X,Y\otimes Z} (\id_{R_J(X)} \otimes \phi_{Y,Z}),
\end{align*}
and similarly one can obtain $ (\psi_{X,Y} \otimes \id_{R_J(Z)})\psi_{X\otimes Y, Z}=  (\id_{R_J(X)} \otimes \psi_{Y,Z})\psi_{X,Y\otimes Z}$. It remains to show the unitality. Because in any monoidal category $l_{1} = r_{1}$, see e.g. \cite[Corollary 2.2.5]{Eti15}, we apply it to $\cc_J$ and obtain $i_J \otimes \id_{1_J} = l^J_{1_J} = r^J_{1_J} = \id_{1_J} \otimes i_J$, where $l^J$ and $r^J$ are the unit constraints in $\cc_J$. Besides, similar to the proof of Proposition \ref{inclusion functor separable Frobenius}, we have $\id_{1_J} \otimes i_J$ is an isomorphism with inverse $\id_{1_J} \otimes p_J$. Thus,
\begin{align*}
R_J(l_X) \phi_{1,X} (\phi_0 \otimes \id_{R_J(X)}) = &(\id_{1_J} \otimes i_J \otimes i_J \otimes \id_{X\otimes 1_J})(\id_{1_J} \otimes p_J \otimes \id_{R_J(X)})\\
= &\id_{1_J} \otimes i_J  \otimes \id_{X\otimes 1_J} = i_J \otimes \id_{1_J} \otimes \id_{X\otimes 1_J} = l^J_{R_J(X)}.
\end{align*}
Moreover, since $i_J\otimes \id_{1_J} = \id_{1_J} \otimes i_J$, we have
\begin{align*}
&R_J(r_X) \phi_{X,1} (\id_{R_J(X)} \otimes \phi_0) = (\id_{1_J \otimes X} \otimes i_J \otimes i_J \otimes \id_{1_J}) (\id_{R_J(X)} \otimes \id_{1_J} \otimes p_J)\\
= &(\id_{1_J \otimes X} \otimes i_J \otimes \id_1 \otimes \id_{1 \otimes 1_J})(\id_{1_J \otimes X} \otimes \id_{1_J} \otimes i_J \otimes \id_{1_J}) (\id_{R_J(X)} \otimes \id_{1_J} \otimes p_J)\\
= &(\id_{1_J \otimes X} \otimes i_J \otimes \id_{1_J})(\id_{1_J \otimes X} \otimes \id_{1_J} \otimes \id_{1_J} \otimes i_J) (\id_{R_J(X)} \otimes \id_{1_J} \otimes p_J)\\
= &\id_{1_J \otimes X} \otimes i_J \otimes \id_{1_J} = \id_{1_J \otimes X} \otimes \id_{1_J} \otimes i_J = r^J_{R_J(X)}.
\end{align*}

Similarly, one can show $l^J_{R_J(X)}(\psi_0 \otimes \id_{R_J(X)}) \psi_{1,X}= R_J(l_X)$, and $r^J_{R_J(X)}(\id_{R_J(X)} \otimes \psi_0) \psi_{X,1}= R_J(r_X)$. Thus, $R_J: \cc \to \cc_J$ is a lax and colax monoidal functor.
\end{proof}

\begin{remark}
Recall that a Frobenius pair $(F, G)$ is a pair of functors $F:\cc\to \dd$ and $G:\dd \to \cc$ such that $G$ is at the same time a left and a right adjoint to $F$. We claim that $(L_J, R_J)$ is a Frobenius pair. In fact, for any object $B$ in $\cc_J$ and $A$ in $\cc$, by Lemma \ref{morphism between component categories}, we have $\Hom_{\cc}(B, A_{jk}) = 0 = \Hom_{\cc}(A_{jk}, B)$ unless $j,k$ are in $J$. Since $I$ is a finite set, we obtain
$$
\begin{aligned}
\Hom_{\cc}(L_J(B), A&) = \Hom_{\cc}(B, A) \cong \Hom_{\cc}(B, \oplus_{j,k  \in I} A_{jk}) \cong \oplus_{j,k \in I} \Hom_{\cc}(B, A_{jk})\\
= &\oplus_{j,k \in J} \Hom_{\cc}(B, A_{jk}) \cong \Hom_{\cc}(B, \oplus_{j,k \in J} A_{jk}) \cong \Hom_{\cc_J}(B, R_J(A)),
\end{aligned}
$$
and
$$
\begin{aligned}
\Hom_{\cc}(A, L_J(B)&) = \Hom_{\cc}(A, B) \cong \Hom_{\cc}(\oplus_{j,k  \in I} A_{jk}, B) \cong \oplus_{j,k \in I} \Hom_{\cc}(A_{jk}, B)\\
= &\oplus_{j,k \in J} \Hom_{\cc}(A_{jk}, B) \cong \Hom_{\cc}(\oplus_{j,k \in J} A_{jk}, B) \cong \Hom_{\cc_J}(R_J(A), B).
\end{aligned}
$$
In fact, the two isomorphisms above are both obtained by applying the functor $R_J$ together with the identity $R_JL_J= \id$, so they are natural on $A$ and $B$. Therefore, $(L_J, R_J)$ is a Frobenius pair. This means that $\cc_J$ is both a reflective and a coreflective subcategory in the sense of \cite[Definition 3.5.2]{Bor94}.
\end{remark}

By means of the projection functor, we obtain the algebra structure of $A_J$ in $\cc_J$.

\begin{proposition}\label{component algebra}
Let $\cc$ be a multiring category with left duals. 
\begin{enumerate}
    \item Let $(A, m_A, u_A)$ be an algebra in $\cc$. Then, $A_J$ is an algebra in $\cc_J$ for which the multiplication $m_{A_J}$ and the unit $u_{A_J}: 1_J \to A_J$ satisfy that $i_{A_J} m_{A_J}=m_A(i_{A_J}\otimes i_{A_J})$ and $i_{A_J}u_{A_J} p_J= u_A$, where $i_{A_J}:= i_J \otimes \id_A \otimes i_J: 1_J \otimes A \otimes 1_J \to A$ is the canonical inclusion. 
    \item Let $(C, \Delta_C, \varepsilon_C)$ be a coalgebra in $\cc$. Then, $C_J$ is a coalgebra in $\cc_J$ for which the comultiplication $\Delta_{C_J}$ and the counit $\varepsilon_{C_J}: C_J \to 1_J$ satisfy that $\Delta_{C_J} p_{C_J}=(p_{C_J} \otimes p_{C_J})\Delta_C$ and $i_J \varepsilon_{C_J} p_{C_J} = \varepsilon_C$, where $p_{C_J}:= p_J \otimes \id_C \otimes p_J: C \to 1_J \otimes C \otimes 1_J$ is the canonical projection.   
\end{enumerate}

\end{proposition}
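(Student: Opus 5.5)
The plan is to obtain the algebra structure on $A_J$ by applying the lax monoidal functor $R_J$ of Proposition \ref{projection functor lax} to $A$. Lax monoidal functors send algebras to algebras \cite[Proposition 3.29]{AM10}, so $R_J(A)=A_J$ becomes an algebra in $\cc_J$ with
$$
m_{A_J}:=R_J(m_A)\,\phi_{A,A}=(\id_{1_J}\otimes m_A\otimes \id_{1_J})(\id_{1_J\otimes A}\otimes i_J\otimes i_J\otimes \id_{A\otimes 1_J}),\qquad u_{A_J}:=R_J(u_A)\,\phi_0 .
$$
Dually, the colax structure $(\psi,\psi_0)$ makes $C_J=R_J(C)$ a coalgebra in $\cc_J$ with $\Delta_{C_J}:=\psi_{C,C}R_J(\Delta_C)$ and $\varepsilon_{C_J}:=\psi_0 R_J(\varepsilon_C)$. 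So existence is immediate, and the content of the statement is the two compatibility identities. Part (2) is formally dual to part (1): reverse all arrows and swap $i_J\leftrightarrow p_J$, $\phi\leftrightarrow\psi$. I will only describe (1).

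For the multiplication identity, I will compute
$$
i_{A_J}m_{A_J}=(i_J\otimes m_A\otimes i_J)(\id_{1_J\otimes A}\otimes i_J\otimes i_J\otimes\id_{A\otimes 1_J}).
$$
By strictness and naturality of the unit constraints, $m_A$ can be pulled out, leaving $m_A\bigl((i_J\otimes\id_A\otimes i_J)\otimes(i_J\otimes\id_A\otimes i_J)\bigr)$. The middle factor $1_J\otimes1_J$ collapses to $1\otimes 1=1$ via $i_J\otimes i_J$. Written out, this is just the interchange law for $\otimes$, together with the fact that tensoring with $1$ is the identity in the strict setting. The result is $m_A(i_{A_J}\otimes i_{A_J})$.

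The step I expect to be the main obstacle is the unit identity $i_{A_J}u_{A_J}p_J=u_A$. Here $u_{A_J}p_J=(\id_{1_J}\otimes u_A\otimes\id_{1_J})\phi_0 p_J$, where $\phi_0$ identifies $1_J$ with $1_J\otimes1\otimes1_J$, and composing with $i_{A_J}$ gives $(i_J\otimes u_A\otimes i_J)\phi_0p_J$. Two things must be checked. First, $(i_J\otimes\id_1\otimes i_J)\phi_0 p_J=i_Jp_J$ as endomorphisms of $1$. I would verify this using the facts from Proposition \ref{inclusion functor separable Frobenius} and the proof of Proposition \ref{projection functor lax}: $\id_{1_J}\otimes i_J$ is inverse to $\id_{1_J}\otimes p_J$, $i_J\otimes\id_{1_J}=\id_{1_J}\otimes i_J$, and $i_J\otimes i_J=i_J$ on $1_J\otimes1_J\cong1_J$. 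Second, the resulting expression equals $u_A\,i_Jp_J$, so $i_{A_J}u_{A_J}p_J=u_A i_Jp_J$. This is not literally $u_A$ unless $u_A$ vanishes on the components $1_i$ with $i\notin J$.

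So I expect the stated identity to need either an implicit convention or a reading in which $A=A_J$ is already supported on $J$; the latter is in line with Proposition \ref{equal zero} announced in the introduction. In writing the proof I would first establish $i_{A_J}u_{A_J}p_J=(i_J\otimes u_A\otimes i_J)$, precomposed with the canonical identification $1\cong1\otimes1\otimes1$ and with $p_J$. I would then argue via Lemma \ref{morphism between component categories} that the components $u_A\circ i_i$ with $i\notin J$ land in $A_{ii}$ and are killed by the projection. Hence the identity holds after projecting to $A_J$, which is the form used later. I would flag this point as the one needing care about what exactly is asserted.
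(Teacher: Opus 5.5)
Your construction is the paper's: $A_J=R_J(A)$ with $m_{A_J}=R_J(m_A)\phi_{A,A}$ and $u_{A_J}=R_J(u_A)\phi_0$, coming from the lax structure of Proposition \ref{projection functor lax}. The multiplication identity follows from the same interchange computation, and the comultiplication identity in (2) is equally fine.

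You diverge from the paper at the unit identity, and there you are right and the paper's proof is not. The paper ends with $(i_J\otimes u_A\otimes i_J)(p_J\otimes p_J)=u_A$. Put $e_J:=i_Jp_J\in\mathrm{End}(1)$ and use that $f\otimes g=f\circ g$ for $f,g\in\mathrm{End}(1)$ in a strict monoidal category. Then that composite is
\[
e_J\otimes u_A\otimes e_J=u_A\circ(e_J\otimes\id_1\otimes e_J)=u_A\,e_J .
\]
So $i_{A_J}u_{A_J}p_J=u_A\,i_Jp_J$. For the trivial algebra $A=1$ with $J\subsetneq I$, this is the proper idempotent $i_Jp_J\neq\id_1=u_A$, so the identity as stated is false.

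The map $u_Ai_i$ factors through $A_{ii}$, and it is non-zero exactly when $A_{ii}\neq0$, because a non-zero algebra in $\cc_{ii}$ has non-zero unit. Hence the stated identity holds precisely when $\{i\in I\mid A_{ii}\neq0\}\subseteq J$, which is your ``supported on $J$'' reading. What holds for every $J$ is the pair
\[
u_{A_J}p_J=p_{A_J}u_A,\qquad i_{A_J}u_{A_J}=u_A\,i_J,\qquad\text{with } p_{A_J}:=p_J\otimes\id_A\otimes p_J .
\]
Both come out of the same interchange computation, so the projected version does not even need Lemma \ref{morphism between component categories}.

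Part (2) has the identical defect, and you should say so explicitly rather than leave it to duality. The computation gives $i_J\varepsilon_{C_J}p_{C_J}=i_Jp_J\,\varepsilon_C$, which is false for $C=1$ and $J\subsetneq I$. The correct identities are $\varepsilon_{C_J}p_{C_J}=p_J\varepsilon_C$ and $i_J\varepsilon_{C_J}=\varepsilon_C\,i_{C_J}$, where $i_{C_J}:=i_J\otimes\id_C\otimes i_J$.

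One correction to your closing remark: the projected form is not what the paper uses later. Corollary \ref{AJ is algebra in C} uses the unprojected identity to conclude that $i_{A_J}$ is an algebra morphism. With the corrected identity, $i_{A_J}$ is only multiplicative. It is unital exactly when $J\supseteq\{i\mid A_{ii}\neq0\}$, and in that case $i_{A_J}$ is an isomorphism by Proposition \ref{equal zero}. Propositions \ref{unit mono} and \ref{restriction} use only the definition $u_{A_J}=R_J(u_A)\phi_0$, so they do not depend on the faulty identity.
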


\begin{proof}
$(1)$ By Proposition \ref{projection functor lax} and \cite[Proposition 3.29]{AM10}, we know $A_J = R_J(A)$ is an algebra in $\cc_J$ with multiplication $R_J(m_A) \phi_{A,A}$ and unit $R_J(u_A) \phi_0$. Consequently, we have
\begin{align*}
i_{A_J} R_J(m_A) \phi_{A,A} = &(i_J \otimes m_A \otimes i_J)(\id_{1_J \otimes A} \otimes i_J \otimes i_J \otimes \id_{A\otimes 1_J})\\
= &m_A (i_J \otimes \id_{A} \otimes i_J \otimes i_J \otimes \id_{A}\otimes i_J) = m_A(i_{A_J}\otimes i_{A_J}),
\end{align*} 
and
$$
i_{A_J}u_{A_J} p_J = i_{A_J} R_J(u_A) \phi_0 p_J= (i_J \otimes u_A \otimes i_J) (p_J \otimes p_J) = u_A.
$$

$(2)$ By Proposition \ref{projection functor lax} and \cite[Proposition 3.29]{AM10}, we know $C_J$ is a coalgebra in $\cc_J$ with multiplication $\psi_{C,C}R_J(\Delta_C)$ and unit $\psi_0 R_J(\varepsilon_C)$. It follows that
\begin{align*}
\psi_{C,C}R_J(\Delta_C) p_{C_J}
= &(\id_{1_J \otimes C} \otimes p_J \otimes p_J \otimes \id_{C \otimes 1_J}) (\id_{1_J} \otimes \Delta_C \otimes \id_{1_J}) (p_J \otimes C \otimes p_J)\\
= &(p_J \otimes \id_C \otimes p_J \otimes p_J \otimes \id_C \otimes p_J) \Delta_C = (p_{C_J} \otimes p_{C_J})\Delta_C,   
\end{align*}
and
\begin{align*}
i_J \varepsilon_{C_J} p_{C_J} = &i_J \psi_0 R(\varepsilon_C) p_{C_J} = (i_J \otimes \varepsilon_C \otimes i_J)(p_J \otimes C \otimes p_J) = \varepsilon_C.
\end{align*}
\end{proof}

\begin{invisible}
\begin{proposition}
Let $\cc$ be a multiring category with left duals, and $(A, m_A, u_A)$ be an algebra in $\cc$. Then, $A_{ii}$ is an algebra in $\cc_{ii}$ for which the multiplication $m_{A_{ii}}$ and the unit $u_{A_{ii}}: 1_i \to A_{ii}$ are the unique morphisms induced by the universal property of kernel such that $i_{A_{ii}} m_{A_{ii}}=m_A(i_{A_{ii}}\otimes i_{A_{ii}})$ and $i_{A_{ii}}u_{A_{ii}}= u_A i_i$.  
\end{proposition}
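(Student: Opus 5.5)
The plan is to obtain the statement as the special case $J=\{i\}$ of Proposition \ref{component algebra}, and then to identify the morphisms it produces with those described via the universal property of the kernel.

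With $J=\{i\}$ we have $1_J=1_i$, $p_J=p_i$, $i_J=i_i$, $\cc_J=\cc_{ii}$ and $A_J=A_{ii}$. Part $(1)$ of Proposition \ref{component algebra} then gives an algebra structure on $A_{ii}$ in $\cc_{ii}$ satisfying
\[
i_{A_{ii}} m_{A_{ii}} = m_A(i_{A_{ii}}\otimes i_{A_{ii}})
\quad\text{and}\quad
i_{A_{ii}} u_{A_{ii}} p_i = u_A.
\]
Precomposing the second identity with $i_i$ and using $p_i i_i=\id_{1_i}$ yields $i_{A_{ii}}u_{A_{ii}}=u_A i_i$, which is the unit condition in the statement.

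For uniqueness, note that $i_{A_{ii}}=i_i\otimes \id_A\otimes i_i$ is a split monomorphism, with retraction $p_{A_{ii}}$, because $\id_A=\sum_{s,t} i_{A_{st}}p_{A_{st}}$. Hence any two morphisms with the same composite with $i_{A_{ii}}$ coincide. The phrase "induced by the universal property of kernel" can then be read as follows: $i_{A_{ii}}$ is the kernel of $\sum_{(s,t)\neq(i,i)} i_{A_{st}}\,p_{A_{st}}$, or equivalently of $\id_A - i_{A_{ii}}p_{A_{ii}}$.

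To see that $m_A(i_{A_{ii}}\otimes i_{A_{ii}})$ and $u_A i_i$ factor through this kernel, I would invoke Lemma \ref{morphism between component categories}. Since $A_{ii}\otimes A_{ii}$ and $1_i$ lie in $\cc_{ii}$, any morphism from them into $A$ composed with $p_{A_{st}}$ for $(s,t)\neq(i,i)$ vanishes. Therefore both morphisms factor uniquely through $i_{A_{ii}}$, and by uniqueness the factorizations coincide with $m_{A_{ii}}$ and $u_{A_{ii}}$ above.

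The associativity and unit axioms are inherited from Proposition \ref{component algebra}. Alternatively, one can check them directly by composing with the monomorphism $i_{A_{ii}}$ and reducing to the axioms for $A$. The unit axiom additionally needs $m_A(u_A\otimes \id_A)(i_i\otimes i_{A_{ii}})$ to equal $i_{A_{ii}}$ composed with the unit constraint of $\cc_{ii}$, which follows from $l^J$ as given in Proposition \ref{CJ of multiring}.

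I expect the main obstacle to be the bookkeeping with the unit constraints $l^J_X=l_X(i_J\otimes\id_X)$ in this last check. The remaining steps are formal.
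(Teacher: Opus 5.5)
Your argument is correct, but its main line differs from the paper's. The paper proves this statement directly. It lets $\pi\colon A\to A/A_{ii}$ be the cokernel of $i_{A_{ii}}$ and notes via Lemma~\ref{morphism between component categories} that $\pi m_A(i_{A_{ii}}\otimes i_{A_{ii}})$ and $\pi u_A i_i$ vanish, since they go from $\cc_{ii}$ to $\oplus_{(s,t)\neq(i,i)}\cc_{st}$. It then gets $m_{A_{ii}}$ and $u_{A_{ii}}$ from the universal property of $i_{A_{ii}}=\ker\pi$. Finally it verifies associativity and the unit laws by composing with the monomorphism $i_{A_{ii}}$. This last step is exactly your ``alternative'', with the left unit law reducing to $l_A(i_i\otimes i_{A_{ii}})=i_{A_{ii}}l_{A_{ii}}(i_i\otimes\id_{A_{ii}})$.

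You instead transport the algebra structure along the lax monoidal functor $R_{\{i\}}$ (Proposition~\ref{component algebra}) and use the kernel only to identify the structure maps. Describing $i_{A_{ii}}$ as the kernel of $\id_A-i_{A_{ii}}p_{A_{ii}}$, rather than as the kernel of its cokernel, is immaterial, since factorizations through a monomorphism are unique. Your route gets the axioms for free and shows that the kernel-induced structure is the one coming from $R_{\{i\}}$. The paper's route is self-contained, using nothing beyond Lemma~\ref{morphism between component categories} and the monicity of $i_{A_{ii}}$.

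One caution about a step you import. The identity $i_{A_{ii}}u_{A_{ii}}p_i=u_A$ quoted from Proposition~\ref{component algebra} is false as soon as $|I|\geq 2$. From $u_{A_{ii}}=R_{\{i\}}(u_A)\phi_0$ one actually gets $i_{A_{ii}}u_{A_{ii}}=u_A i_i$, hence $i_{A_{ii}}u_{A_{ii}}p_i=u_A\,i_ip_i$. For $A=1$ the left side is then the idempotent $i_ip_i\neq\id_1$. You only use the identity after precomposing with $i_i$, so your conclusion is right. It should, however, be derived directly from $u_{A_{ii}}=R_{\{i\}}(u_A)\phi_0$, or from the kernel factorization of $u_Ai_i$, rather than from the misstated formula.
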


\begin{proof}
We denote the cokernel of $i_{A_{ii}}$ by $\pi: A \to A/A_{ii}$. Obseve that $\pi m_A(i_{A_{ii}}\otimes i_{A_{ii}}): A_{ii} \otimes A_{ii} \to A/A_{ii}$ and $\pi u_A i_i: 1_i \to A/A_{ii}$ are morphisms from $\cc_{ii}$ to $\oplus_{(s,t)\not= (i,i)} \cc_{st}$. By Lemma \ref{morphism between component categories}, $\pi m_A(i_{A_{ii}}\otimes i_{A_{ii}})$ and $\pi u_A i_i$ are zero. Note that $i_{A_{ii}}$ is the kernel of $\pi$. by the universal property of kernel, we obtain $m_{A_{ii}}$ and $u_{A_{ii}}: 1_i \to A_{ii}$ such that $i_{A_{ii}} m_{A_{ii}}=m_A(i_{A_{ii}}\otimes i_{A_{ii}})$ and $i_{A_{ii}}u_{A_{ii}}= u_A i_i$. Then, we compute the algebra structure. Because
\begin{align*}
&i_{A_{ii}} m_{A_{ii}} (m_{A_{ii}} \otimes \id_{A_{ii}}) = m_A(i_{A_{ii}}m_{A_{ii}} \otimes i_{A_{ii}}) = m_A( m_A(i_{A_{ii}}\otimes i_{A_{ii}}) \otimes i_{A_{ii}})\\
= &m_A(m_A \otimes \id_A) (i_{A_{ii}}\otimes i_{A_{ii}} \otimes i_{A_{ii}}) = m_A(\id_A \otimes m_A) (i_{A_{ii}}\otimes i_{A_{ii}} \otimes i_{A_{ii}})\\
= &m_A (i_{A_{ii}}\otimes m_A(i_{A_{ii}} \otimes i_{A_{ii}})) = m_A (i_{A_{ii}}\otimes i_{A_{ii}} m_{A_{ii}}) = i_{A_{ii}} m_{A_{ii}} (\id_{A_{ii}} \otimes m_{A_{ii}}), 
\end{align*}
we obtain $m_{A_{ii}} (m_{A_{ii}} \otimes \id_{A_{ii}}) = m_{A_{ii}} (\id_{A_{ii}} \otimes m_{A_{ii}})$. Furthermore, since
\begin{align*}
&i_{A_{ii}} m_{A_{ii}} (u_{A_{ii}} \otimes \id_{A_{ii}}) = m_A(i_{A_{ii}}u_{A_{ii}} \otimes i_{A_{ii}}) = m_A(u_A i_i \otimes i_{A_{ii}})\\ 
= &m_A(u_A \otimes \id_A) (i_i \otimes i_{A_{ii}}) = l_A (i_i \otimes i_{A_{ii}}) = l_A (\id_{1} \otimes i_{A_{ii}}) (i_i \otimes \id_{A_{ii}})\\
= &i_{A_{ii}} l_{A_{ii}}(i_i \otimes \id_{A_{ii}}) = i_{A_{ii}} l'_{A_{ii}},
\end{align*}
where $l'_{A_{ii}} = l_{A_{ii}}(i_i \otimes \id_{A_{ii}})$ is the left unit isomorphism of $A_{ii}$ in $\cc_{ii}$, we have $m_{A_{ii}} (u_{A_{ii}} \otimes \id_{A_{ii}}) = l'_{A_{ii}}$. Similarly, one can show $m_{A_{ii}} (\id_{A_{ii}}\otimes u_{A_{ii}}) = r'_{A_{ii}}$. As a result, $A_{ii}$ is an algebra in $\cc_{ii}$.
\end{proof}
\end{invisible}

\begin{corollary}\label{AJ is algebra in C}
Let $\cc$ be a multiring category with left duals. 
\begin{enumerate}
    \item Let $(A, m_A, u_A)$ be an algebra in $\cc$. Then, $A_J$ is an algebra in $\cc$, and $i_{A_J}:A_J \to A$ is an algebra morphism. 
    \item Let $(C, \Delta_C, \varepsilon_C)$ be a coalgebra in $\cc$. Then, $C_J$ is a coalgebra in $\cc$, and $p_{C_J}:C \to C_J$ is a coalgebra morphism.
\end{enumerate}    
\end{corollary}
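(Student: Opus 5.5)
The plan is to combine Proposition \ref{component algebra} with Proposition \ref{algebra and coalgebra in component subcateogry}, and then verify the morphism claims directly from the identities recorded in Proposition \ref{component algebra}.

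For (1), Proposition \ref{component algebra} makes $(A_J, m_{A_J}, u_{A_J})$ an algebra in $\cc_J$ with unit $u_{A_J}\colon 1_J \to A_J$. Proposition \ref{algebra and coalgebra in component subcateogry}(1) then makes $(A_J, m_{A_J}, u_{A_J}p_J)$ an algebra in $\cc$; concretely this is the image under the lax monoidal inclusion $L_J$ of Proposition \ref{inclusion functor separable Frobenius}. To see that $i_{A_J}\colon A_J \to A$ is an algebra morphism in $\cc$, we need two equalities. The first is $i_{A_J} m_{A_J} = m_A(i_{A_J}\otimes i_{A_J})$, which is exactly the first identity in Proposition \ref{component algebra}(1); the tensor product of $\cc_J$ is the restriction of that of $\cc$, so both sides are morphisms $A_J\otimes A_J \to A$ in $\cc$. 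The second is $i_{A_J}(u_{A_J}p_J) = u_A$, which is the second identity there.

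For (2) the argument is dual. Proposition \ref{component algebra}(2) gives a coalgebra $(C_J,\Delta_{C_J},\varepsilon_{C_J})$ in $\cc_J$. Proposition \ref{algebra and coalgebra in component subcateogry}(2) turns it into the coalgebra $(C_J,\Delta_{C_J}, i_J\varepsilon_{C_J})$ in $\cc$. The identities $\Delta_{C_J}p_{C_J} = (p_{C_J}\otimes p_{C_J})\Delta_C$ and $(i_J\varepsilon_{C_J})p_{C_J} = \varepsilon_C$ are precisely the conditions for $p_{C_J}$ to be a coalgebra morphism.

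I do not expect a genuine obstacle here. The only point needing care is that the algebra structure on $A_J$ in $\cc$ is the one transported along $L_J$, with unit $u_{A_J}p_J$ rather than $u_{A_J}$ itself. Once this is fixed, the previously established identities are literally the morphism axioms.
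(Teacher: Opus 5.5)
Your argument follows the paper's own proof exactly. However, the step you call ``the second identity there'' is false for a general $J$, so the proposal breaks at the unit (resp.\ counit) axiom, and the paper's proof breaks at the same point.

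The composite $i_{A_J}(u_{A_J}p_J)$ factors through $p_J\colon 1\to 1_J$, so it vanishes on every $1_k$ with $k\notin J$. Carrying out the computation behind that identity in Proposition \ref{component algebra} gives
\[
i_{A_J}u_{A_J}p_J=(i_J\otimes u_A\otimes i_J)(p_J\otimes \id_1\otimes p_J)=i_Jp_J\otimes u_A\otimes i_Jp_J=u_A\circ i_Jp_J ,
\]
not $u_A$: the step equating this with $u_A$ silently treats $i_Jp_J$ as $\id_1$, which is only true for $J=I$.

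A concrete counterexample is $A=1$ with $J\subsetneq I$. Then $A_J\cong 1_J$ and $i_{A_J}\cong i_J$, while $i_Jp_J\neq \id_1=u_A$. In fact no choice of unit on $1_J$ can make $i_J$ unital, because $i_J\colon 1_J\to 1$ is a proper monomorphism and hence not split-epi.

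The coalgebra case fails in the same way. One gets $i_J\varepsilon_{C_J}p_{C_J}=i_Jp_J\circ\varepsilon_C$, and $C=1$ shows that $p_{C_J}=p_J$ does not preserve counits. So the morphism claims in the statement cannot be proved for arbitrary $J$.

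What your argument does establish:
\begin{enumerate}
\item[(a)] $A_J$ is an algebra in $\cc$ and $C_J$ is a coalgebra in $\cc$.
\item[(b)] $i_{A_J}$ is multiplicative and $p_{C_J}$ is comultiplicative, since those identities in Proposition \ref{component algebra} are correct.
\end{enumerate}
Unitality of $i_{A_J}$ holds exactly when $u_A\circ i_k=0$ for all $k\notin J$. Equivalently, $J\supseteq\{i\in I\mid A_{ii}\neq 0\}$, because $u_A i_k=0$ forces the unit of the algebra $A_{kk}$ in $\cc_{kk}$ to vanish. In that case Proposition \ref{equal zero} shows $A$ already lies in $\cc_J$, and $i_{A_J}$ is an isomorphism of algebras.

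A correct version of the statement therefore needs such a hypothesis on $J$, or must speak only of non-unital (resp.\ non-counital) morphisms.
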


\begin{proof}
$(1)$ By Proposition \ref{component algebra}, we know $A_J$ is an algebra in $\cc_J$ satisfying $i_{A_J} m_{A_J}=m_A(i_{A_J}\otimes i_{A_J})$ and $i_{A_J}u_{A_J} p_J= u_A$. By Proposition \ref{algebra and coalgebra in component subcateogry}, $(A_J, m_{A_J}, u_{A_J} p_J)$ is an algebra in $\cc$. The two equations imply that $i_{A_J}:A_J \to A$ is an algebra morphism. $(2)$ It follows similarly.
\end{proof}

The following proposition proves that any non-zero algebra $A = \oplus_{i,j \in I} A_{ij}$ in $\cc$ must be of the form $A_J$ where $J=\{ i\in I| A_{ii} \not= 0 \}$. 

\begin{proposition}\label{equal zero}
Let $\cc$ be a multiring category with left duals, and $(A, m_A, u_A)$ be an algebra in $\cc$. Suppose $A_{ll} = 0$ for $l$ in $I$. Then, $A_{lj} = A_{jl}= 0$ for all $j$ in $I$. Consequently, $A = A_J$ for $J=\{ i\in I| A_{ii} \not= 0 \}$.   
\end{proposition}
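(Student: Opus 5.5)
The plan is the standard unit argument: an algebra whose component $A_{ll}$ vanishes acts as zero on the whole $l$-th row and column. Fix $l$ with $A_{ll}=0$ and $j\in I$; I will show $\id_{A_{lj}}=0$. Work with the components of the unitality axiom $m_A(u_A\otimes \id_A)=\id_A$ (strictness, $l_A=\id$). Tensor this identity on the left with $\id_{1_l}$ and on the right with $\id_{1_j}$. This gives
\[
\id_{A_{lj}}=\id_{1_l}\otimes m_A(u_A\otimes \id_A)\otimes \id_{1_j}.
\]
Next, decompose the unit $1=\oplus_{k}1_k$ in the first tensor factor via $\id_1=\sum_k i_kp_k$. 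Since $1_l\otimes 1_k=0$ for $k\neq l$, only the summand $k=l$ survives, so $\id_{1_l}\otimes u_A=\id_{1_l}\otimes u_Ai_lp_l$.

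Now I factor the middle term through $A_{ll}$. Write $u_Ai_l:1_l\to A$. Its composite $1_l\to A\to \oplus_{s,t}A_{st}$ lands only in the $(l,l)$ component, by Lemma \ref{morphism between component categories}, since $1_l\in\cc_{ll}$. More concretely, $u_Ai_l=\sum_{s,t}i_{A_{st}}p_{A_{st}}u_Ai_l$, and $p_{A_{st}}u_Ai_l$ is a morphism from an object of $\cc_{ll}$ to one of $\cc_{st}$, hence zero unless $(s,t)=(l,l)$. Therefore $u_Ai_l=i_{A_{ll}}p_{A_{ll}}u_Ai_l$ factors through $A_{ll}=0$, so $u_Ai_l=0$. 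Plugging this back in gives $\id_{1_l}\otimes u_A=0$, hence $\id_{A_{lj}}=0$ and $A_{lj}=0$. The symmetric argument with $m_A(\id_A\otimes u_A)=\id_A$, tensoring with $\id_{1_j}$ on the left and $\id_{1_l}$ on the right, gives $A_{jl}=0$.

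For the consequence, let $J=\{i\in I\mid A_{ii}\neq 0\}$. If $i\notin J$ or $j\notin J$, then $A_{ij}=0$ by the first part. Therefore
\[
A\cong\oplus_{i,j\in I}A_{ij}=\oplus_{i,j\in J}A_{ij}\cong A_J,
\]
with the canonical inclusion $i_{A_J}$ an isomorphism. By Corollary \ref{AJ is algebra in C}, $i_{A_J}$ is an algebra morphism, so the identification $A=A_J$ holds as algebras.

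The main obstacle is the bookkeeping in the first step. One must correctly justify $\id_{1_l}\otimes u_A=\id_{1_l}\otimes u_Ai_lp_l$ from $1_l\otimes 1_k=0$ via additivity of $\otimes$, and see that $\id_{1_l}\otimes(\cdot)\otimes\id_{1_j}$ applied to $\id_A$ is indeed $\id_{A_{lj}}$. Both follow from bilinearity of the tensor product and the same canonical-isomorphism manipulations already used in the proofs of Lemma \ref{algebra} and Lemma \ref{morphism between component categories}, so I would mimic those.
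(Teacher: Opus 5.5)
Your proof is correct and is essentially the paper's argument. Both show that the part of the unit coming from $1_l$ (your $u_Ai_l$, the paper's $\im(u_A)_{ll}$) must land in $A_{ll}=0$, and then use the unit axioms to kill $A_{lj}$ and $A_{jl}$. The differences are only bookkeeping: the paper proves $u_A\otimes\id_{A_{lj}}=0$ and precomposes the unit axiom with $i_{A_{lj}}$, whereas you prove $\id_{1_l}\otimes u_A=0$ and tensor the unit axiom with $\id_{1_l}$ and $\id_{1_j}$. Your explicit check that $i_{A_J}$ is an algebra isomorphism spells out the ``consequently'' step that the paper leaves implicit.
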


\begin{proof}
Because $1 = \oplus_{i\in I} 1_i$, we know that $1$ is in $\oplus_{i\in I} \cc_{ii}$. Thus, by Lemma \ref{morphism between component categories}, $\im(u_A)$ is in $\oplus_{i\in I} \cc_{ii}$. We can write $\im(u_A) = \oplus_{i\in I} \im(u_A)_{ii}$ where $\im(u_A)_{ii}$ is in $\cc_{ii}$ for each $i \in I$. Consequently, 
$$
\im(u_A) \otimes A_{lj} = (\oplus_{i\in I} \im(u_A)_{ii}) \otimes A_{lj} \cong \oplus_{i\in I} (\im(u_A)_{ii} \otimes A_{lj}) = \im(u_A)_{ll} \otimes A_{lj},
$$
and similarly $A_{jl} \otimes \im(u_A) \cong A_{jl} \otimes \im(u_A)_{ll}$. Note that $\im(u_A)$ is a subobject of $A$. Hence, $\im(u_A)_{ii}$ is a subobject of $A_{ii}$, which means $\im(u_A)_{ll} = 0$ as $A_{ll} = 0$. Therefore, $\im(u_A) \otimes A_{lj} \cong \im(u_A)_{ll} \otimes A_{lj} = 0$ and $A_{jl} \otimes \im(u_A) \cong A_{jl} \otimes \im(u_A)_{ll} = 0$. As a consequence, $u_A \otimes \id_{A_{lj}} = \id_{A_{jl}} \otimes u_A = 0$. It follows that
$$
i_{A_{lj}} = m_A (u_A \otimes \id_A) (\id_1 \otimes i_{A_{lj}}) = m_A (\id_A \otimes i_{A_{lj}}) (u_A \otimes \id_{A_{lj}}) = 0
$$
and
$$
i_{A_{jl}} = m_A (\id_A \otimes u_A) (i_{A_{jl}} \otimes \id_1) = m_A (i_{A_{jl}} \otimes \id_A) (\id_{A_{jl}} \otimes u_A) = 0.
$$
As a result, $A_{lj} = A_{jl}= 0$.
\end{proof}

\begin{remark}
Let $\cc$ be a multiring category with left duals. For any algebra $A$ in $\cc$ such that $J = \{ i \in I | A_{ii} \not= 0\}$ contains at least two elements, one can observe that $u_{A_{jj}}: 1 \to A_{jj}$ for any $j \in J$ is not a monomorphism. In fact, for any $k \not= j$, $u_{A_{jj}} i_k$ is a morphism from $\cc_{kk}$ to $\cc_{jj}$, which results to be zero by Lemma \ref{morphism between component categories}. This observation also supports Proposition \ref{semisimple multiring main result}. Besides, one can easily observe that $u_{A}$ is a monomorphism if and only if $J=I$.
\end{remark}

\begin{proposition}\label{unit mono}
Let $\cc$ be a multiring category with left duals, and $(A, m_A, u_A)$ be an algebra in $\cc$. Let $J=\{ i\in I| A_{ii} \not= 0 \}$. Then, $u_{A_J}: 1_J \to A_J$ is a monomorphism.  
\end{proposition}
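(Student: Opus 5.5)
The kernel of $u_{A_J}$ is a subobject of $1_J = \oplus_{j\in J} 1_j$, so the plan is to show it meets no summand $1_j$. Since the $1_j$ are simple and pairwise non-isomorphic, $\ker(u_{A_J})$ is a direct sum of some of the $1_j$. By Lemma \ref{morphism between component categories}, $u_{A_J}$ decomposes as $\oplus_{j\in J} u_j$ with $u_j := (u_{A_J})_{jj} : 1_j \to A_{jj}$. So it suffices to prove that each $u_j$ is non-zero; a non-zero morphism out of the simple object $1_j$ is automatically a monomorphism, and then $u_{A_J}$, being a direct sum of monomorphisms, is a monomorphism.

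To see $u_j \neq 0$, I would use the algebra $A_{jj}$. Applying Proposition \ref{component algebra} with the singleton $\{j\}$ makes $A_{jj}$ an algebra in $\cc_{jj}$ with unit $1_j \to A_{jj}$. Its unit is obtained from $u_A$ by applying $1_j\otimes - \otimes 1_j$ and composing with the canonical isomorphism $1_j \cong 1_j\otimes 1\otimes 1_j$. The unit $u_j$ of $A_J$ arises in the same way from $R_J$. I would therefore check that the $jj$-component of $R_J(u_A)\phi_0$ coincides, up to these canonical isomorphisms, with the unit of $A_{jj}$. This is a short bookkeeping step using $i_{A_J}u_{A_J}p_J = u_A$ and the fact that $1_j\otimes 1_J \otimes 1_j \cong 1_j$.

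Once the two units are identified, suppose $u_j = 0$. The unit axiom in $\cc_{jj}$, $m_{A_{jj}}(u_j\otimes \id_{A_{jj}}) = l'_{A_{jj}}$, then gives that the isomorphism $l'_{A_{jj}}$ is zero, so $A_{jj}=0$. This contradicts $j\in J$. Hence every $u_j\neq 0$, and the argument is complete.

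The main obstacle is the identification in the second step: checking carefully that the $jj$-component of $u_{A_J}$ really is the unit of the algebra $A_{jj}$. One has to track $p_J$, $i_J$ and the unit constraints of $\cc_J$ versus $\cc_{jj}$. A way around this is to argue directly. If $u_j=0$, then $u_A i_j = 0$, since $u_A i_j$ factors through $i_{A_J}u_{A_J}p_J i_j$ and $p_J i_j$ lands in the $j$-summand. Then for $X = A_{jj}$ one computes
\[
i_{A_{jj}} = m_A(u_A\otimes \id_A)(i_j\otimes i_{A_{jj}})\circ(\text{canonical iso } A_{jj}\cong 1_j\otimes A_{jj}) = 0,
\]
mirroring the computation in Proposition \ref{equal zero}. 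This forces $A_{jj}=0$, again a contradiction. I would present this direct version to avoid the bookkeeping.
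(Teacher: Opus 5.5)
Your proposal is correct and follows essentially the paper's route. The paper also reduces to the components $1_i\otimes u_A\otimes 1_i$, notes that each has kernel either $0$ or $1_i$ because $1_i$ is simple, and rules out the zero case because $A_{ii}\neq 0$ is an algebra in $\cc_{ii}$ (Proposition~\ref{component algebra}) and so has non-zero unit. Your direct variant, showing that $u_A i_j=0$ would force $i_{A_{jj}}=0$ exactly as in Proposition~\ref{equal zero}, cleanly makes rigorous the identification of the $jj$-component of $u_{A_J}$ with the unit of $A_{jj}$, which the paper uses without comment.
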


\begin{proof}
Note that $\ker(u_{A_J})$ is in $\oplus_{i\in J} \cc_{ii}$ since it is a subobject of $1_J$. Thus, we can write
\begin{align*}
\ker(u_{A_J}) = &\oplus_{i \in J} \ker(u_{A_J})_{ii} = \oplus_{i \in J} (1_i \otimes \ker(u_{A_J}) \otimes 1_i) 
\cong \oplus_{i \in J} \ker(1_i \otimes u_{A_J} \otimes 1_i)\\
= &\oplus_{i \in J} \ker(1_i \otimes 1_J \otimes u_{A} \otimes 1_J \otimes 1_i) \cong \oplus_{i \in J} \ker(1_i \otimes u_{A} \otimes 1_i).
\end{align*}
For any $i$ in $J$, because $\ker(1_i \otimes u_A \otimes 1_i)$ is a subobject of $1_i$, and $1_i$ is a simple object, we know that $\ker(1_i \otimes u_A \otimes 1_i)$ is isomorphic to either $1_i$ or $0$. Since $A_{ii} \not= 0$, by Proposition \ref{component algebra} we know that $A_{ii}$ is a non-zero algebra in $\cc_{ii}$, which implies $u_{A_{ii}} \not= 0$. Hence, $1_i \otimes u_A \otimes 1_i$ is non-zero. This means $\ker(1_i \otimes u_A \otimes 1_i) = 0$. Consequently, $\ker(u_{A_J}) = 0$.
\end{proof}

\begin{proposition}\label{restriction}
Let $\cc$ be a semisimple multiring category with left duals, and $(A, m_A, u_A)$ be an algebra in $\cc$. Let $J=\{ i\in I| A_{ii} \not= 0 \}$. Then, the functor $-\otimes A_J: \cc_J \to (\cc_J)_{A_J}$ is separable.
\end{proposition}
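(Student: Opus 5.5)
The plan is to reduce the statement to Proposition \ref{induction functor}, applied inside the monoidal category $\cc_J$ rather than in $\cc$. By Proposition \ref{CJ of multiring}, $\cc_J$ is a multiring category with left duals and unit object $1_J$. By Proposition \ref{component algebra}, $A_J$ is an algebra in $\cc_J$ with unit $u_{A_J}\colon 1_J \to A_J$. Proposition \ref{induction functor} holds for an arbitrary monoidal category. So the functor $-\otimes A_J\colon \cc_J \to (\cc_J)_{A_J}$ is separable as soon as $u_{A_J}$ is split-mono as a morphism of $\cc_J$.

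The monomorphism part is already available. Proposition \ref{unit mono} states exactly that $u_{A_J}$ is a monomorphism. Its kernel computation takes place in $\cc$, but $\cc_J$ is a full subcategory closed under kernels (shown in the proof of Proposition \ref{CJ of multiring}). Hence $u_{A_J}$ is also a monomorphism in $\cc_J$.

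It remains to show that $u_{A_J}$ splits, which is the only place semisimplicity enters. I would show that $\cc_J$ is semisimple, arguing as in Corollary \ref{CJ of multifusion}. An object $X$ of $\cc_J$ is a direct sum of simple objects of $\cc$. Each such summand is a direct summand of $X = 1_J\otimes X\otimes 1_J$, so by Lemma \ref{morphism between component categories} it lies in some $\cc_{ij}$ with $i,j\in J$. Subobjects in $\cc_J$ coincide with subobjects in $\cc$, because $\cc_J$ is full and closed under kernels. Therefore these summands are simple in $\cc_J$, and $\cc_J$ is semisimple. Lemma \ref{semisimple regular}, or simply the splitting of monomorphisms in a semisimple abelian category, then gives $q\colon A_J \to 1_J$ in $\cc_J$ with $q\,u_{A_J} = \id_{1_J}$. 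Alternatively, one can take a retraction in $\cc$ and conjugate it by $1_J\otimes - \otimes 1_J$, noting that $1_J\otimes 1_J\otimes 1_J \cong 1_J$ via $\id_{1_J}\otimes i_J$ as in Proposition \ref{inclusion functor separable Frobenius}.

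The main obstacle is bookkeeping rather than substance. The unit of $A_J$ in $\cc_J$ is $u_{A_J}\colon 1_J\to A_J$, not the composite $u_{A_J}p_J$ used when $A_J$ is viewed in $\cc$, and Proposition \ref{induction functor} must be applied to the former. Once this is fixed, combining $u_{A_J}$ being split-mono in $\cc_J$ with Proposition \ref{induction functor} (i)$\Rightarrow$(ii) yields separability. In the degenerate case $A=0$, $J$ is empty and the statement is vacuous or trivial; I would remark that we assume $A\neq 0$ so that $J\neq\emptyset$.
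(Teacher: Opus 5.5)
Your proposal is correct and is essentially the paper's proof: $u_{A_J}$ is a monomorphism by Proposition \ref{unit mono}, semisimplicity makes it split, and Proposition \ref{induction functor} applied in $\cc_J$ gives separability. The paper gets the splitting a bit more directly, so you do not need to re-prove that $\cc_J$ is semisimple. It notes that $u_{A_J}$ is regular in $\cc$ by Lemma \ref{semisimple regular}, hence regular in the full subcategory $\cc_J$, and that $u_{A_J}\,g\,u_{A_J}=u_{A_J}$ with $u_{A_J}$ mono forces $g\,u_{A_J}=\id_{1_J}$.
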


\begin{proof}
By Proposition \ref{semisimple regular}, we know $u_{A_J}: 1_J \to A_J$ is regular.
Since $u_{A_J}$ is a monomorphism, see Proposition \ref{unit mono}, we obtain $u_{A_J}$ is a split-mono. Therefore, by Proposition \ref{induction functor}, we obtain $-\otimes A_J: \cc_J \to (\cc_J)_{A_J}$ is separable.
\end{proof}

\begin{remark}
Recall that for monoidal categories $\cc$ and $\dd$ and an algebra $A$ in $\cc$, if $F:\cc \to \dd$ is a lax monoidal functor with $\phi_0$ split-epi, then $-\otimes A: \cc \to \cc_A$ is semiseparable implies that $-\otimes F(A): \dd \to \dd_{F(A)}$ is semiseparable, see \cite[Proposition 3.25]{BZ26}. By Proposition \ref{semisimple semiseparable}, $-\otimes A: \cc \to \cc_A$ is semiseparable. Thus, Proposition \ref{restriction} indicates that $-\otimes F(A): \dd \to \dd_{F(A)}$ could be separable, where $F = R_J$ is a lax monoidal functor by Proposition \ref{projection functor lax}. Furthermore, recall that if $F:\cc \to \dd$ is a fully faithful lax monoidal functor with $\phi_0$ epimorphism, then $-\otimes F(A): \dd \to \dd_{F(A)}$ is separable implies that $-\otimes A: \cc \to \cc_A$ is separable, see \cite[Proposition 3.26]{BZ26}. In Proposition \ref{restriction}, the functor $F = R_J$ is not faithful. This implies the faithfulness in \cite[Proposition 3.26]{BZ26} is indispensable.
\end{remark}

\section{Applications and examples}\label{Applications and examples}

This section is divided into two parts: Applications and Examples. We mainly focus on the applications and examples of Proposition \ref{main result}. For the sake of brevity, we only mention the equivalent condition $(2)$ in Proposition \ref{main result}. In fact, one can also use the other equivalent conditions to obtain similar results. 

\subsection{Application to weak Hopf algebras}\label{Application to weak Hopf algebras}

Now, we consider an application of Proposition \ref{main result} to a certain class of weak Hopf algebras. Recall \cite{Ha99}, \cite{Sz01}, see also \cite[Corollary 2.22]{ENO05} that any multifusion category $\cc$ is equivalent to the category of finite dimensional representations of a (nonunique) regular semisimple weak Hopf algebra. This weak Hopf algebra is connected if and only if $\cc$ is a fusion category. Indeed, this weak Hopf algebra can be given by $H=\mathrm{End}(F)$, see e.g. \cite[2.5]{ENO05}, where $F:\cc \to R\!-\!\mathrm{Bimon}$ is a fiber functor. By Proposition \ref{main result}, we know that $\cc$ is a fusion category if and only if for any non-zero $H$-module algebra $A$, the functor $-\otimes A: \mathrm{Rep}(H) \to \mathrm{Rep}(H)_A$ is separable. Therefore, we establish the correspondence between connectness of $H$ and the separability.

\begin{proposition}
$H$ is connected if and only if for any non-zero $H$-module algebra $A$, the functor $-\otimes A: \mathrm{Rep}(H) \to \mathrm{Rep}(H)_A$ is separable.
\end{proposition}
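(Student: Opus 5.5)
The plan is to chain three equivalences. First, $H$ is connected if and only if $\cc$ is a fusion category. Second, $\cc$ is a fusion category if and only if condition $(2)$ of Proposition \ref{main result} holds for $\cc$. Third, condition $(2)$ transports along the equivalence $\cc \simeq \mathrm{Rep}(H)$ to the stated condition on $H$-module algebras. Here $H=\mathrm{End}(F)$ is the regular semisimple weak Hopf algebra attached to the multifusion category $\cc$ through a fiber functor $F:\cc\to R\!-\!\mathrm{Bimon}$. By \cite{Ha99}, \cite{Sz01}, \cite[Corollary 2.22]{ENO05} there is a monoidal equivalence $\cc\simeq\mathrm{Rep}(H)$, the category of finite dimensional $H$-modules. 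The first equivalence, connectedness of $H$ versus $\cc$ being fusion, is quoted from the same references, so I take it as given.

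Next I apply Proposition \ref{main result} to the multifusion category $\cc$. It gives that $\cc$ is fusion if and only if $-\otimes A:\cc\to\cc_A$ is separable for every non-zero algebra $A$ in $\cc$. It remains to transport this statement along a monoidal equivalence $G:\cc\to\mathrm{Rep}(H)$ and to identify the algebras on both sides.

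\begin{enumerate}[$(a)$]
\item The monoidal equivalence $G$ induces a bijection, up to isomorphism, between algebras in $\cc$ and algebras in $\mathrm{Rep}(H)$. It preserves and reflects non-zeroness, since equivalences are additive.
\item Algebras in $\mathrm{Rep}(H)$ are by definition the $H$-module algebras. Over a weak Hopf algebra the unit object is the counital subalgebra $H_t$ rather than $\Bbbk$, and the tensor product is $\otimes_{H_t}$; I take these as the standard conventions of $\mathrm{Rep}(H)$.
\item $G$ induces an equivalence $\cc_A\simeq\mathrm{Rep}(H)_{G(A)}$ such that $G\circ(-\otimes A)$ is naturally isomorphic to $(-\otimes G(A))\circ G$.
\end{enumerate}

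Separability is invariant under composing with equivalences and under natural isomorphism, by \cite{NVV89}. Hence $-\otimes A$ is separable if and only if $-\otimes G(A)$ is.

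Alternatively, and more cleanly, I can avoid point $(c)$ by using Proposition \ref{induction functor}. Separability of $-\otimes A$ is equivalent to $u_A$ being split-mono in the underlying category. A monoidal equivalence preserves units up to the structure isomorphism $\phi_0$, and it preserves split monomorphisms. So $u_A$ is split-mono if and only if $u_{G(A)}$ is, which gives the same transfer. I would present this route.

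The main obstacle is only bookkeeping. I must make sure that the statement "for any non-zero $H$-module algebra $A$" ranges over exactly the algebras in $\mathrm{Rep}(H)$, taken with $H_t$ as unit and $\otimes_{H_t}$ as tensor product. I must also check that $\mathrm{Rep}(H)$ is itself multifusion, so that Proposition \ref{main result} could alternatively be applied to it directly. With those identifications in place, the result follows immediately.
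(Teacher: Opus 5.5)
Your proposal is correct and follows the same route as the paper: you quote that $H$ is connected if and only if $\cc$ is fusion (from \cite{Ha99}, \cite{Sz01}, \cite[Corollary 2.22]{ENO05}), and then apply Proposition \ref{main result} across the equivalence $\cc\simeq\mathrm{Rep}(H)$. The paper leaves the transport to $H$-module algebras implicit, whereas you make it explicit. Your version is clean: it uses Proposition \ref{induction functor} together with the invariance, under a monoidal equivalence, of the property that the unit $u_A$ is split-mono.
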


We can also apply Proposition \ref{restriction} to $\mathrm{Rep}(H)$. Suppose $\cc$, hence $\mathrm{Rep}(H)$, is a multifusion category but not a fusion category. Consider $\mathrm{Rep}(H) :=\oplus_{i,j \in I} \mathrm{Rep}(H)_{ij}$. For a non-zero $H$-module algebra $A$, let $J=\{ i\in I| A_{ii} \not= 0 \}$. By Proposition \ref{restriction}, $-\otimes A_J: \mathrm{Rep}(H)_J \to (\mathrm{Rep}(H)_J)_{A_J}$ is separable, where $\mathrm{Rep}(H)_J :=\oplus_{i,j \in J} \mathrm{Rep}(H)_{ij}$ is the direct sum of component subcategories of $\mathrm{Rep}(H)$ indexed by $i,j \in J$. By Corollary \ref{CJ of multifusion}, we know that $\mathrm{Rep}(H)_J$ is a multifusion category. Therefore, $\mathrm{Rep}(H)_J$ is equivalent to the category of finite dimensional representations of a (nonunique) regular semisimple weak Hopf algebra. This weak Hopf algebra can be given by $H'=\mathrm{End}(F')$, where $F':\mathrm{Rep}(H)_J \to R'\!-\!\mathrm{Bimon}$ is a fiber functor. Consequently, we can replace $\mathrm{Rep}(H)_J$ by $\mathrm{Rep}(H')$.

\begin{proposition}
For a non-zero $H$-module algebra $A$, let $J=\{ i\in I| A_{ii} \not= 0 \}$. Denote the equivalence by $E:\mathrm{Rep}(H)_J \to \mathrm{Rep}(H')$. Then, the functor $-\otimes E(A_J): \mathrm{Rep}(H') \to (\mathrm{Rep}(H'))_{E(A_J)}$ is separable.
\end{proposition}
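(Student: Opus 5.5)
The plan is to transport the separability result of Proposition \ref{restriction} along the equivalence $E$, using the characterization of separability through the unit of the algebra (Proposition \ref{induction functor}).

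\textbf{Step 1: separability in $\mathrm{Rep}(H)_J$.} First, $\mathrm{Rep}(H)$ is a multifusion category. It is therefore a semisimple multiring category with left duals, so Proposition \ref{restriction} applies to the $H$-module algebra $A$ and gives that $-\otimes A_J:\mathrm{Rep}(H)_J\to(\mathrm{Rep}(H)_J)_{A_J}$ is separable. By Proposition \ref{induction functor}, this is equivalent to the statement that the unit $u_{A_J}:1_J\to A_J$ is split-mono in $\mathrm{Rep}(H)_J$. Concretely, Proposition \ref{unit mono} says $u_{A_J}$ is mono, and semisimplicity (Lemma \ref{semisimple regular}) makes it split.

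\textbf{Step 2: transport along $E$.} The equivalence $E:\mathrm{Rep}(H)_J\to\mathrm{Rep}(H')$ is an equivalence of multifusion categories, hence a monoidal equivalence. The step I expect to need the most care is making sure $E$ is strong monoidal, i.e. that it carries a structure $(\phi,\phi_0)$ with $\phi_0:1_{H'}\to E(1_J)$ an isomorphism. This is what lets a monoidal equivalence transport algebras. I would justify it by noting that the equivalence in \cite[Corollary 2.22]{ENO05} is, by construction via the fiber functor $F'$, an equivalence of monoidal categories.

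Granting this, $E(A_J)$ is an algebra in $\mathrm{Rep}(H')$ with unit $u_{E(A_J)}=E(u_{A_J})\circ\phi_0$. Since functors preserve split monomorphisms, $E(u_{A_J})$ is split-mono. Composing it with the isomorphism $\phi_0$ again gives a split monomorphism, so $u_{E(A_J)}$ is split-mono in $\mathrm{Rep}(H')$.

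\textbf{Step 3: conclude.} Applying Proposition \ref{induction functor} once more, now in $\mathrm{Rep}(H')$, shows that $-\otimes E(A_J):\mathrm{Rep}(H')\to\mathrm{Rep}(H')_{E(A_J)}$ is separable.

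An alternative route avoids explicit units: observe that $E(A_J)$ is non-zero with $E(A_J)_{ii}\neq 0$ for all components of $\mathrm{Rep}(H')$, and then reapply Proposition \ref{restriction} directly in $\mathrm{Rep}(H')$. The direct transport in Step 2 is simpler, so I would use it.
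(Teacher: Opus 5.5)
Your proposal is correct and takes the same route as the paper: apply Proposition~\ref{restriction} to get separability of $-\otimes A_J$ on $\mathrm{Rep}(H)_J$, then pass through the equivalence $E$. The paper only says one may ``replace'' $\mathrm{Rep}(H)_J$ by $\mathrm{Rep}(H')$, whereas you justify this step explicitly: since $E$ is strong monoidal, $u_{E(A_J)}=E(u_{A_J})\phi_0$ is split-mono, and Proposition~\ref{induction functor} applied in $\mathrm{Rep}(H')$ then gives separability.
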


\begin{remark}
Let $\cc$ be a multiring category with left duals. For any equivalence $G:\cc \to \dd$ with its quasi-inverse $F:\dd \to \cc$, we know that there are natural isomorphisms $\eta: \id_{\cc} \to FG$ and $\eta': \id_{\dd} \to GF$. For the sake of brevity, we abuse the notation $1$ for the unit in both $\cc$ and $\dd$. Define natural transformations 
\begin{align*}
&1_J \otimes \eta \otimes 1_J: 1_J \otimes \id_{\cc} \otimes 1_J \to 1_J \otimes FG \otimes 1_J\\
&1_J \otimes \eta' \otimes 1_J: 1_J \otimes \id_{\dd} \otimes 1_J \to 1_J \otimes GF \otimes 1_J
\end{align*}
by
\begin{align*}
&(1_J \otimes \eta \otimes 1_J)_X := 1_J \otimes \eta_X \otimes 1_J: 1_J \otimes X \otimes 1_J \to 1_J \otimes FG(X) \otimes 1_J\\
&(1_J \otimes \eta' \otimes 1_J)_Y := 1_J \otimes \eta'_Y \otimes 1_J: 1_J \otimes Y \otimes 1_J \to 1_J \otimes GF(Y) \otimes 1_J\\   
\end{align*}
for $X$ in $\cc$ and $Y$ in $\dd$. One can readily observe that $1_J \otimes \eta \otimes 1_J$ and $1_J \otimes \eta' \otimes 1_J$ are natural isomorphisms. Therefore, $1_J \otimes G \otimes 1_J: 1_J \otimes \cc \otimes 1_J \to 1_J \otimes \dd \otimes 1_J$ is an equivalence. This means that $\cc_J$ is equivalent to $\dd_J$. In this subsection, we can replace $\dd$ by $\mathrm{Rep}(H)$, and obtain that $\cc_J$ is equivalent to $\mathrm{Rep}(H)_J$.
\end{remark}

\subsection{Application to transfer of simplicity}\label{Application to transfer of simplicity}

In this subsection, we consider the transfer of the simplicity of the unit object by a monoidal functor. First, we recall the following result.

\begin{proposition}\cite[Proposition 3.26]{BZ26}\label{retraction of semiseparability}
Let $(\cc, \otimes, 1)$, $(\dd, \otimes, 1')$ be monoidal categories, $A$ be an algebra in $\cc$ with unit $u_A:1\to A$. Let $(F:\cc\to\dd,\phi^2,\phi^0)$ be a fully faithful lax monoidal functor. If $\phi^0$ is an epimorphism and $-\otimes F(A):\dd \to \dd_{F(A)}$ is semiseparable (resp., separable, naturally full), then $-\otimes A:\cc\to\cc_A$ is semiseparable (resp., separable, naturally full).
\end{proposition}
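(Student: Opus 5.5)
The plan is to reduce everything to the characterization of Proposition \ref{induction functor}: $-\otimes A:\cc\to\cc_A$ is semiseparable (resp. separable, naturally full) if and only if $u_A$ is regular (resp. split-mono, split-epi). The same holds for $-\otimes F(A)$ in $\dd$. So the task is to show that regularity (resp. split-mono, split-epi) of $u_{F(A)}=F(u_A)\circ\phi^0$ forces the corresponding property of $u_A$. Here $F(A)$ carries the algebra structure $(F(A),F(m_A)\phi^2_{A,A},F(u_A)\phi^0)$ induced by the lax monoidal structure.

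\textbf{Semiseparable case.} Suppose $g:F(A)\to 1'$ satisfies $F(u_A)\phi^0 g F(u_A)\phi^0=F(u_A)\phi^0$. Since $\phi^0$ is an epimorphism, cancel it on the right to get $F(u_A)\,\phi^0 g\, F(u_A)=F(u_A)$. The morphism $\phi^0 g:F(A)\to F(1)$ lies in $\dd(F(A),F(1))$. By fullness it equals $F(h)$ for some $h:A\to1$. Then $F(u_A h u_A)=F(u_A)$, and faithfulness gives $u_Ahu_A=u_A$, so $u_A$ is regular.

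\textbf{Separable case.} Suppose $gF(u_A)\phi^0=\id_{1'}$. Then $\phi^0$ is split-mono as well as epi, hence an isomorphism. Therefore $(\phi^0)^{-1}$ exists and $\phi^0 g F(u_A)=\phi^0(\phi^0)^{-1}=\id_{F(1)}$. Write $\phi^0 g=F(h)$ by fullness; faithfulness then yields $hu_A=\id_1$.

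\textbf{Naturally full case.} Suppose $F(u_A)\phi^0 g=\id_{F(A)}$. Set $F(h)=\phi^0 g$, which gives $F(u_Ah)=\id_{F(A)}=F(\id_A)$. Faithfulness gives $u_Ah=\id_A$.

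\textbf{Main obstacle.} The only delicate point is the separable case. There $\phi^0$ appears on the "wrong side" for cancellation, and we must observe that an epimorphism which is split-mono is invertible. The other two cases only need right-cancellation of the epimorphism $\phi^0$ together with full faithfulness to lift morphisms $F(A)\to F(1)$ back to $\cc$. Finally, apply Proposition \ref{induction functor} in $\cc$ to conclude.
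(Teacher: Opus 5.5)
Your argument is correct and complete. You reduce to the unit characterization of Proposition \ref{induction functor}, right-cancel the epimorphism $\phi^0$, and lift $\phi^0 g\colon F(A)\to F(1)$ to some $h\colon A\to 1$ by full faithfulness; that is the natural proof. The paper itself gives no proof here and only quotes the result from \cite{BZ26}.

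One small simplification: the separable case is not actually delicate. From $gF(u_A)\phi^0=\id_{1'}$ you get $\phi^0 gF(u_A)\phi^0=\phi^0$, and right-cancelling $\phi^0$ gives $\phi^0 gF(u_A)=\id_{F(1)}$ directly. Also, the naturally full case never uses that $\phi^0$ is an epimorphism.
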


Recall again that a lax monoidal functor $(F:\cc\to\dd,\phi^2,\phi^0)$ maps an algebra $(R, m_R, u_R)$ in $\cc$ into an algebra $(F(R), F(m_R) \phi^2_{R,R}, F(u_R) \phi^0)$ in $\dd$, see e.g.\ \cite[Proposition 3.29]{AM10}.

\begin{proposition}
Let $(\cc, \otimes, 1)$ be a semisimple multiring category with left duals, and $(\dd, \otimes, 1')$ be a semisimple abelian monoidal category with biexact tensor product such that the unit object is simple. Suppose there is a fully faithful lax monoidal functor $(F:\cc\to\dd,\phi^2,\phi^0)$ such that $\phi^0$ is an epimorphism. Then, $\cc$ is a ring category.    
\end{proposition}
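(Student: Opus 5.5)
The plan is to verify condition $(2)$ of Proposition \ref{semisimple multiring main result} for $\cc$: for every non-zero algebra $A$ in $\cc$, the functor $-\otimes A:\cc\to\cc_A$ is separable. That proposition then gives that the unit of $\cc$ is simple, i.e.\ $\cc$ is a ring category.

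Fix a non-zero algebra $(A,m_A,u_A)$ in $\cc$. By the remark recalled just before the statement, the lax monoidal functor $F$ sends $A$ to an algebra
\[
(F(A),\,F(m_A)\phi^2_{A,A},\,F(u_A)\phi^0)
\]
in $\dd$. The first step is to check that $F(A)\neq 0$. Since $F$ is fully faithful, $F(\id_A)=\id_{F(A)}$, and the map $\Hom_\cc(A,A)\to\Hom_\dd(F(A),F(A))$ is bijective. If $F(A)$ were zero, its endomorphism set would be $\{0\}$, so $\Hom_\cc(A,A)$ would be a singleton. Then $\id_A=0$, hence $A=0$, which is a contradiction.

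The second step applies Proposition \ref{semisimple induction separable}$(1)$ to $\dd$. Its hypotheses hold: $\dd$ is semisimple abelian monoidal with biexact tensor product and simple unit. So $-\otimes F(A):\dd\to\dd_{F(A)}$ is separable. Since $F$ is fully faithful lax monoidal and $\phi^0$ is an epimorphism, Proposition \ref{retraction of semiseparability} transfers separability back, so $-\otimes A:\cc\to\cc_A$ is separable. As $A$ was an arbitrary non-zero algebra, condition $(2)$ of Proposition \ref{semisimple multiring main result} holds. That proposition applies because $\cc$ is a semisimple multiring category with left duals, so $(2)\Rightarrow(1)$ gives the conclusion. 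Alternatively one can invoke Lemma \ref{multiring} directly, which does not even need semisimplicity of $\cc$.

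I expect the only real obstacle to be the non-vanishing of $F(A)$, since Proposition \ref{semisimple induction separable} needs a non-zero algebra. The argument above relies only on full faithfulness, through the bijection on endomorphism sets. No additivity of $F$ is needed, because a zero object has a single endomorphism in any category. If one worries about that point, one can instead note the following: if $F(A)=0$, then $F(A)$ is a zero object in $\dd$, so $\Hom_\dd(F(A),F(A))$ is a singleton and the same conclusion follows.
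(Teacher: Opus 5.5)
Your proof is correct and follows the paper's argument: you push $A$ forward to the algebra $F(A)$, apply Proposition \ref{semisimple induction separable} in $\dd$, pull separability back with Proposition \ref{retraction of semiseparability}, and conclude with Proposition \ref{semisimple multiring main result}. Your check that $F(A)\neq 0$ via full faithfulness fills in a step the paper leaves implicit, and you are right that Lemma \ref{multiring} alone suffices for the last step, so semisimplicity of $\cc$ is not actually needed.
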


\begin{proof}
For any non-zero algebra $A$ in $\cc$, we know that $F(A)$ is an algebra in $\dd$. By Proposition \ref{semisimple induction separable}, we obtain that $-\otimes F(A):\dd \to \dd_{F(A)}$ is separable. By Proposition \ref{retraction of semiseparability}, $-\otimes A:\cc\to\cc_A$ is separable. Consequently, by Proposition \ref{semisimple multiring main result}, we obtain that $\cc$ is a ring category.
\end{proof}

One can apply this result to semisimple multiring categories with left duals and multifusion categories, then obtain the following corollary immediately.

\begin{corollary}
Let $(\cc, \otimes, 1)$, $(\dd, \otimes, 1')$ be semisimple multiring categories with left duals (resp., multifusion categories). Suppose $\dd$ is a ring category and there is a fully faithful lax monoidal functor $(F:\cc\to\dd,\phi^2,\phi^0)$ such that $\phi^0$ is an epimorphism. Then, $\cc$ is also a ring category (resp., fusion category).    
\end{corollary}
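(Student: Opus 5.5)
The plan is to reduce the corollary to the preceding proposition: we only have to check that its hypotheses hold in each of the two settings. In the first setting, $\cc$ is a semisimple multiring category with left duals. $\dd$ is also such a category and is assumed to be a ring category, so its unit $1'$ is simple. Moreover, $\dd$ is a semisimple abelian monoidal category with biexact tensor product, because a multiring category has biexact tensor product by definition. Hence $\dd$ satisfies exactly the assumptions placed on $\dd$ in the preceding proposition. Since $F$ is fully faithful and lax monoidal with $\phi^0$ an epimorphism, that proposition applies directly and shows that $\cc$ is a ring category.

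For the multifusion case, I would first record that a multifusion category is a multiring category with left duals. It is a multitensor category, hence rigid, and its tensor product is biexact by \cite[Proposition 4.2.1]{Eti15}. It is also semisimple by definition. So both $\cc$ and $\dd$ fall under the first case, where "ring category" for $\dd$ means that $1'$ is simple, i.e.\ $\dd$ is fusion. The first case then gives that $\cc$ is a ring category. Equivalently, $1$ is simple in $\cc$, and since $\cc$ is already multifusion, $\cc$ is a fusion category by Proposition \ref{main result}.

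The one point needing care is matching terminology. The paper identifies "ring category" with "unit object is simple", as in Proposition \ref{semisimple multiring main result}$(1)$. To use that the unit of $\dd$ is simple, I would rely on the fact that in a multiring category with left duals the unit is semisimple and $\mathrm{End}(1)$ is a product of copies of $\Bbbk$ indexed by its simple summands \cite[Theorem 4.3.8]{Eti15}. So $\mathrm{End}(1')=\Bbbk$ forces $1'$ to be simple, and conversely. Everything else is a direct citation, so I expect no real obstacle.
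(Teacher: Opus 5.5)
Your proposal is correct and follows the same route as the paper, which obtains the corollary by directly applying the preceding proposition. Your checks that a ring category with left duals has simple unit and that multifusion categories are semisimple multiring categories with left duals (so that $\dd$ meets that proposition's hypotheses) spell out what the paper calls ``immediate''.
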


\subsection{Application to semisimple indecomposable module category}\label{Application to semisimple indecomposable module category}

The theory of module categories over tensor categories is crucial for understanding the structure of tensor categories. A systematic theory is developed in \cite[Chapter 7]{Eti15}. In this subsection, we concentrate on semisimple indecomposable module categories. 

The definition of indecomposable module category can be found in \cite[Definition 2.6, 2.7]{Ost03}. More precisely, a module category over a monoidal category $\cc$ is a category $\m$ together with an exact bifunctor $\otimes:\cc \times \m \to \m$ and functorial associativity and unit isomorphisms $m_{X,Y,M}:(X\otimes Y) \otimes M \to X \otimes (Y \otimes M)$, $l_M: 1\otimes M \to M$ for any $X,Y \in \cc$, $M \in \m$ such that $(\id_X \otimes m_{Y,Z,M})m_{X, Y \otimes Z, M}(a_{X,Y,Z}\otimes \id_M) = m_{X,Y,Z\otimes M} m_{X\otimes Y,Z,M}$ and $(\id_X \otimes l_M) m_{X,1,Y} = r_X \otimes \id_M$. A module category is indecomposable if it is not equivalent to a direct sum of two nontrivial module categories. Next, we apply Proposition \ref{main result} to this setting. 

Let $\cc$ be a fusion category, and $\mathcal{M}$ be a semisimple indecomposable module category over $\cc$. By \cite[Theorem 3.1]{Ost03}, $\mathcal{M}$ is equivalent to $\cc_A$, where $A:= \underline{\Hom}(M,M)$ for a fixed object $M$ in $\mathcal{M}$. Recall also that an equivalence is separable, see e.g. \cite[Corollary 9]{CMZ02}. By Proposition \ref{main result}, the functor $-\otimes A:\cc \to \cc_A$ is separable. Thus, their composition $\cc \to \cc_A \to \mathcal{M}$ is still a separable functor, by \cite[Proposition 46]{CMZ02}. Hence, we have the following proposition.

\begin{proposition}
Let $\cc$ be a fusion category. For any semisimple indecomposable module category $\mathcal{M}$ over $\cc$, the above-mentioned composition $\cc \to \cc_A \to \mathcal{M}$ is a separable functor from $\cc$ to $\mathcal{M}$.    
\end{proposition}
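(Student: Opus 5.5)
The plan is to factor the functor in question as a composite and use that separability is stable under composition. Concretely, $\cc \to \mathcal{M}$ is the composite $E\circ(-\otimes A)$. Here $-\otimes A:\cc\to\cc_A$ is the tensor functor by $A:=\underline{\Hom}(M,M)$, and $E:\cc_A\to\mathcal{M}$ is the equivalence of \cite[Theorem 3.1]{Ost03}. I will check that each factor is separable and then invoke \cite[Proposition 46]{CMZ02}, which says a composite of separable functors is separable.

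\textbf{Step 1: $-\otimes A$ is separable.} First I will make sure $A$ is a non-zero algebra in $\cc$. The internal Hom $\underline{\Hom}(M,M)$ carries the standard composition algebra structure. It is non-zero whenever $M\neq 0$, because $\Hom_{\cc}(1,\underline{\Hom}(M,M))\cong \mathrm{End}_{\mathcal{M}}(M)\ni \id_M\neq 0$. So I fix a non-zero object $M$; this is possible since $\mathcal{M}$, being indecomposable, is nontrivial. Since $\cc$ is fusion, condition $(1)$ of Proposition \ref{main result} holds, and therefore condition $(2)$ holds. Hence $-\otimes A:\cc\to\cc_A$ is separable. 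Alternatively, one can apply Proposition \ref{semisimple induction separable} directly, since the unit of $\cc$ is simple.

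\textbf{Step 2: $E$ is separable.} Any equivalence is fully faithful. So $\p$ can be taken to be the inverse of the bijection $\f$, which is natural, and $E$ is separable \cite[Corollary 9]{CMZ02}.

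\textbf{Step 3: composition.} I will apply \cite[Proposition 46]{CMZ02} to the composite $E\circ(-\otimes A)$ to conclude that it is separable. In explicit terms, if $\p^1$ and $\p^2$ are the retractions witnessing separability of the two factors, then $\p^1\circ\p^2$ restricted along $-\otimes A$ is a natural retraction of $\f$ for the composite. The only point I expect to need care is Step 1: I must confirm that the algebra supplied by Ostrik's theorem is non-zero and is an algebra in $\cc$ itself, not in some completion. Semisimplicity of $\mathcal{M}$ and finiteness of $\cc$ ensure that the internal Hom exists in $\cc$. Everything else is formal.
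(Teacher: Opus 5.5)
Your proposal is correct and follows the paper's argument: factor through Ostrik's equivalence $\cc_A\simeq\mathcal{M}$, obtain separability of $-\otimes A$ from Proposition \ref{main result} and of the equivalence from \cite[Corollary 9]{CMZ02}, then compose using \cite[Proposition 46]{CMZ02}. Your extra check that $A=\underline{\Hom}(M,M)$ is non-zero for $M\neq 0$ is needed to invoke condition $(2)$, and it is a detail the paper leaves implicit.
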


\subsection{Application to Grothendieck ring}\label{Application to Grothendieck ring}

First, we recall the definition of $\mathbb{Z}_+$-ring, see e.g. \cite[Definition 3.1.1]{Eti15}. Let $\mathbb{Z}_+$ denote the semi-ring of non-negative integers. Let $A$ be a ring which is free as a $\mathbb{Z}$-module. A $\mathbb{Z}_+$-basis of $A$ is a basis $B = \{b_i\}_{i\in I}$ such that $b_ib_j = \sum_{k\in I} c^k_{ij} b_k$, where $c^k_{ij} \in \mathbb{Z}_+$. A $\mathbb{Z}_+$-ring is a ring with a fixed $\mathbb{Z}_+$-basis and with unit $1$ which is a non-negative linear combination of the basis elements. A unital $\mathbb{Z}_+$-ring is a $\mathbb{Z}_+$-ring such that $1$ is a basis element. 

Let A be a $\mathbb{Z}_+$-ring, and let $I_0$ be the set of $i \in I$ such that $b_i$ occurs in the decomposition of $1$. Let $\tau:A \to \mathbb{Z}$ denote the group homomorphism defined by $\tau(b_i) = 1$ if $i \in I_0$, $\tau(b_i) = 0$ if $i \not= j^*$. A $\mathbb{Z}_+$-ring $A$ with basis $\{b_i\}_{i\in I}$ is called a based ring if there exists an involution $i \mapsto i^*$ of I such that the induced map $a = \sum_{i\in I} a_i b_i \mapsto a^* = \sum_{i\in I} a_i b_{i^*}$ is an anti-involution of the ring $A$, and $\tau(b_i b_j) = 1$ if $i = j^*$, $\tau(b_i b_j) = 0$ if $i \not= j^*$, see e.g. \cite[Definition 3.1.3]{Eti15}. Recall that for a given $\mathbb{Z}_+$-ring A, being a (unital) based ring is a property, not an additional structure, see e.g. \cite[Exercise 3.1.5 (ii)]{Eti15}.

Let $\cc$ be an abelian $\Bbbk$-linear category where objects have finite length. Recall that the Grothendieck group $\mathrm{Gr}(\cc)$ of $\cc$ is the free abelian group generated by isomorphism classes of simple objects in $\cc$, see e.g. \cite[Definition 1.5.8]{Eti15}. Now, let $\cc$ be a multiring category. The tensor product on $\cc$ induces a
natural multiplication on $\mathrm{Gr}(\cc)$. Thus, $\mathrm{Gr}(\cc)$ is a $\mathbb{Z}_+$-ring with the unit $[1]$, which is called the Grothendieck ring, see e.g. \cite[Definition 4.5.2]{Eti15}. Besides, a multifusion ring is a based ring of finite rank, and a fusion ring is a unital based ring of finite rank, see e.g. \cite[Definition 3.1.7]{Eti15}. Now, we recall a famous result which connect (multi)fusion category and (multi)fusion ring.

\begin{proposition}\cite[Proposition 4.9.1]{Eti15}\label{fusion category fusion ring}
If $\cc$ is a semisimple multitensor category then $\mathrm{Gr}(\cc)$ is a
based ring. If $\cc$ is a (multi)fusion category, then $\mathrm{Gr}(\cc)$ is a (multi)fusion ring.
\end{proposition}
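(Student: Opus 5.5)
The statement is a cited result, \cite[Proposition 4.9.1]{Eti15}, but a self-contained argument runs as follows. Take as $\mathbb{Z}_+$-basis of $\mathrm{Gr}(\cc)$ the classes $b_i=[X_i]$ of the simple objects. The structure constants $c^k_{ij}$ are the multiplicities $[X_i\otimes X_j:X_k]$, hence non-negative integers. The unit $[1]=\sum_{i\in I_0}b_i$ is a non-negative combination of basis elements; by the decomposition $1=\oplus 1_i$ into pairwise non-isomorphic simples, it is in fact a multiplicity-free sum. So $\mathrm{Gr}(\cc)$ is a $\mathbb{Z}_+$-ring.

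\textbf{The involution.} Define $i\mapsto i^*$ by $X_{i^*}\cong X_i^*$, which makes sense by rigidity. Duals of simple objects are simple, and $X^{**}\cong X$ in a semisimple multitensor category, so this is an involution. The identity $(X\otimes Y)^*\cong Y^*\otimes X^*$, together with additivity of duality, shows that $a\mapsto a^*$ is an anti-involution of the ring.

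\textbf{The trace condition.} This is the key step. Since the unit is a direct sum of simples, semisimplicity gives $\tau(b_ib_j)=\dim\Hom(1,X_i\otimes X_j)$. We check this: $\tau$ counts the multiplicity of each $1_l$, $l\in I_0$, and each $1_l$ occurs in $1$ with multiplicity one and has $\mathrm{End}(1_l)=\Bbbk$, as $\Bbbk$ is algebraically closed and $1_l$ is simple. Adjunction for duals gives $\Hom(1,X_i\otimes X_j)\cong\Hom(X_j^{*},X_i)$ up to the correct side convention for left duals. By Schur's lemma this space is $\Bbbk$ when $X_i\cong X_j^*$, i.e.\ $i=j^*$, and is $0$ otherwise. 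The main obstacle is getting the left/right duality conventions consistent, so that it is exactly $i=j^*$ rather than $j=i^*$ that appears. Since $*$ is an involution these two conditions coincide, so the issue disappears.

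\textbf{Finiteness and unitality.} If $\cc$ is (multi)fusion, it has finitely many simples, so the rank is finite and $\mathrm{Gr}(\cc)$ is a multifusion ring. If moreover $1$ is simple, then $[1]$ is itself a basis element, so $\mathrm{Gr}(\cc)$ is unital and hence a fusion ring.
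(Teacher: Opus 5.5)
Your proposal is correct and follows the standard argument of \cite[Proposition 4.9.1]{Eti15}, which the paper cites rather than proves. The involution comes from duality, using $X^{**}\cong X$ in the semisimple case and $(X\otimes Y)^*\cong Y^*\otimes X^*$. The trace condition is $\tau(b_ib_j)=\dim\Hom(1,X_i\otimes X_j)=\dim\Hom(X_i^*,X_j)=\delta_{i,j^*}$, by multiplicity-freeness of $1$, the duality adjunction and Schur's lemma; finiteness of the simples and simplicity of $1$ then give the (multi)fusion ring assertions as you describe.
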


Based on this proposition, our result induces a new connection between fusion ring and separability.

\begin{proposition}
Let $\cc$ be a multifusion category. Then, $\mathrm{Gr}(\cc)$ is a fusion ring if and only if the functor $-\otimes A:\cc \to \cc_A$ is separable for any non-zero algebra $A$ in $\cc$. 
\end{proposition}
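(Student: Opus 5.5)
The plan is to reduce the statement to the equivalence $(1)\Leftrightarrow(2)$ of Proposition \ref{main result}. It then suffices to show that, for a multifusion category $\cc$, the unit object $1$ is simple if and only if $\mathrm{Gr}(\cc)$ is a fusion ring.

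By Proposition \ref{fusion category fusion ring}, $\mathrm{Gr}(\cc)$ is always a multifusion ring, that is, a based ring of finite rank. A fusion ring is by definition a unital based ring of finite rank. So the only thing left to check is the following claim: the unit $[1]$ of $\mathrm{Gr}(\cc)$ is a basis element if and only if $1$ is simple in $\cc$.

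The basis of $\mathrm{Gr}(\cc)$ consists of the classes $[X]$ of the simple objects $X$. By \cite[Theorem 4.3.8]{Eti15}, we have $1=\oplus_{i\in I}1_i$ with the $1_i$ pairwise non-isomorphic simple objects. Hence $[1]=\sum_{i\in I}[1_i]$.

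\textbf{Case $1$ simple.} Then $[1]$ is itself a basis element, so the based ring $\mathrm{Gr}(\cc)$ is unital. Being a unital based ring is a property, not extra structure \cite[Exercise 3.1.5]{Eti15}, so $\mathrm{Gr}(\cc)$ is a fusion ring. Alternatively, one can directly invoke the "fusion" part of Proposition \ref{fusion category fusion ring}.

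\textbf{Case $\mathrm{Gr}(\cc)$ a fusion ring.} Then $[1]$ equals a single basis element. Since $\mathrm{Gr}(\cc)$ is a free $\mathbb{Z}$-module on the classes of simples, the expression $\sum_{i\in I}[1_i]$ is its unique expansion in that basis. It is a single basis element only if $|I|=1$. Therefore $1=1_i$ is simple.

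Combining this claim with Proposition \ref{main result} $(1)\Leftrightarrow(2)$ yields the statement. The main subtle point is the meaning of "fusion ring" here. The fixed $\mathbb{Z}_+$-basis must be the one given by simple classes, so that "unital" means $[1]$ is literally a basis element. I would state this explicitly. Beyond that, the argument is bookkeeping with the decomposition of $1$ and the freeness of $\mathrm{Gr}(\cc)$.
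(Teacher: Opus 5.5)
Your proposal is correct and follows the paper's proof. Both reduce the statement to $(1)\Leftrightarrow(2)$ of Proposition \ref{main result} and use Proposition \ref{fusion category fusion ring} for the direction ``$1$ simple $\Rightarrow$ fusion ring''. Your unique-expansion argument $[1]=\sum_{i\in I}[1_i]$ simply spells out the step the paper states briefly as ``$[1]$ is a basis element, which implies $1$ is simple.''
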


\begin{proof}
$(\Rightarrow)$ Suppose $\mathrm{Gr}(\cc)$ is a fusion ring. By definition of fusion ring, we know that the isomorphism class $[1]$ of unit object $1$ in $\cc$ is a basis element. This implies that the unit object $1$ in $\cc$ is a simple object. Consequently, $\cc$ is a fusion category. By Proposition \ref{main result}, the functor $-\otimes A:\cc \to \cc_A$ is separable for any non-zero algebra $A$ in $\cc$.

$(\Leftarrow)$ Suppose for any non-zero algebra $A$ in $\cc$ the functor $-\otimes A:\cc \to \cc_A$ is separable. By Proposition \ref{main result}, $\cc$ is a fusion category. Then, by Proposition \ref{fusion category fusion ring}, $\mathrm{Gr}(\cc)$ is a fusion ring.

\end{proof}

\subsection{Examples}

\begin{example}\label{example:Vec}
It is well-known that the category $\mathrm{Vec}$ of finite dimensional $\Bbbk$-vector spaces is a fusion category, see e.g. \cite[Example 4.1.2]{Eti15}. By Proposition \ref{main result}, we know that for any non-zero algebra $A$ in $\mathrm{Vec}$, the functor $-\otimes A:\mathrm{Vec} \to \mathcal{M}_A$ is separable, where $\mathcal{M}_A$ is the category of right $A$-modules. Besides, as observed in \cite[Example 3.2 2)]{BT15}, $\Bbbk$ is a left $\otimes$-generator in $\mathrm{Vec}$. Hence, by Remark \ref{forgetful functor separable}, if $A$ is a separable algebra, then $F \circ (-\otimes A):\mathrm{Vec} \to \mathrm{Vec}$ is separable, where $F:\mathcal{M}_A \to \mathrm{Vec}$ is the forgetful functor and $-\otimes A:\mathrm{Vec} \to \mathcal{M}_A$. \end{example}

\begin{remark}
Let $G:\mathrm{Vec} \to \mathrm{Vec}$ be a right exact functor. By \cite[Theorem 2.4]{Iv12}, $G:\mathrm{Vec} \to \mathrm{Vec}$ is naturally isomorphic to $-\otimes G(\Bbbk):\mathrm{Vec} \to \mathrm{Vec}$. Since $G(\Bbbk)$ is a finite dimensional vector space, it automatically carries an algebraic structure with point-wise multiplication. By Example \ref{example:Vec}, if $G(\Bbbk)$ is a separable algebra, then $-\otimes G(\Bbbk):\mathrm{Vec} \to \mathrm{Vec}$ is separable, which implies $G:\mathrm{Vec} \to \mathrm{Vec}$ is separable. In particular, if $G$ satisfies that $G(\Bbbk) \cong \Bbbk$ i.e. $G(\Bbbk)$ is a $1$-dimensional algebra, then $G(\Bbbk)$ is a separable algebra. 
\end{remark}

\begin{example}
Let $H$ be a semisimple finite dimensional Hopf algebra over $\Bbbk$. By the reconstruction theorem, see e.g. \cite[Theorem 5.3.12]{Eti15}, we know that the representation category $\mathrm{Rep}(H)$ of $H$ is a fusion category. By Proposition \ref{main result}, we know that for any non-zero algebra $A$ in $\mathrm{Rep}(H)$, i.e. $H$-module algebra, the functor $-\otimes A:\mathrm{Rep}(H) \to \mathrm{Rep}(H)_A$ is separable.
\end{example}

\begin{example}
Let $A$ be a semisimple finite dimensional algebra over $\Bbbk$. Let
$\cc$ be the category of finite dimensional $A$-bimodules. It is known that $\cc$ is a multifusion category, and $\cc$ is a fusion category if and only if $A$ is simple, see e.g. \cite[Example 4.1.3]{Eti15}. Therefore, by Proposition \ref{main result}, we know that $A$ is simple if and only if for any non-zero algebra $A'$ in $\cc$, $-\otimes A':\cc \to \cc_{A'}$ is separable.
\end{example}

\noindent\textbf{Acknowledgements.} The author would like to thank A. Ardizzoni for meaningful suggestions and comments, and sincerely acknowledge the support provided by CSC (China Scholarship Council) through a PhD
student fellowship (No. 202406190047). This paper was written while the author was member of the ``National Group for Algebraic and Geometric Structures and their Applications'' (GNSAGA-INdAM). This work was partially supported by the project funded by the European Union - NextGenerationEU under NRRP, Mission 4 Component 2 CUP D53D23005960006 - Call PRIN 2022 No.\, 104 of February 2, 2022 of Italian Ministry of University and Research; Project 2022S97PMY \textit{Structures for Quivers, Algebras and Representations (SQUARE).}

\end{document}